\DeclareMathAlphabet{\mathpzc}{OT1}{pzc}{m}{it}
\newtheorem{remark}[theorem]{Remark}
\numberwithin{equation}{section}
\newcommand{\calL}{{\mathcal L}}
\newcommand{\calH}{{\mathcal H}}
\newcommand{\calE}{{\mathcal E}}
\newcommand{\calW}{{\mathcal W}}
\newcommand{\we}{\mathscr{W}}
\newcommand{\normC}[1]{ \|{#1}\|_{\C}}
\newcommand{\dt}{{\Delta t}}
\newcommand{\frakd}{{\mathfrak d}}
\newcommand{\frakc}{\mathfrak{c}}
\newcommand{\bmr}{{\boldsymbol r}}
\newcommand{\slope}{\mathfrak{s}}
\newcommand{\Ener}{\mathscr{E}}
\newcommand{\M}{\mathcal{M}}
\newcommand{\Vb}{\mathbf{V}}
\newcommand{\Ub}{\mathbf{U}}
\newcommand{\etab}{\boldsymbol{\eta}}
\newcommand{\fb}{\mathbf{f}}
\DeclareMathOperator*{\esssup}{esssup}
\title{A PDE approach to fractional diffusion: a space--fractional wave equation\thanks{EO is partially supported by CONICYT through FONDECYT project 3160201.}}
\author{Lehel Banjai\thanks{Maxwell Institute for Mathematical Sciences, School of Mathematical  \& Computer Sciences, Heriot-Watt University, Edinburgh EH14 4AS, UK. \texttt{l.banjai@hw.ac.uk}.}
\and
Enrique Ot\'arola\thanks{Departamento de Matem\'atica, Universidad T\'ecnica Federico Santa Mar\'ia, Valpara\'iso, Chile. \texttt{enrique.otarola@usm.cl}.}
}
\date{Draft version of \today.}
\begin{document}

\maketitle
\begin{abstract}
We study solution techniques for an evolution equation involving second order derivative in time and the spectral fractional powers, of order $s \in (0,1)$, of symmetric, coercive, linear, elliptic, second--order operators in bounded domains $\Omega$. We realize fractional diffusion as the Dirichlet-to-Neumann map for a nonuniformly elliptic problem posed on the semi--infinite cylinder $\C = \Omega \times (0,\infty)$. We thus rewrite our evolution problem as a quasi--stationary elliptic problem with a dynamic boundary condition and derive space, time, and space--time regularity estimates for its solution. The latter problem exhibits an exponential decay in the extended dimension and thus suggests a truncation that is suitable for numerical approximation. We propose and analyze two fully discrete schemes. The discretization in time is based on finite difference discretization techniques: trapezoidal and leapfrog schemes. The discretization in space relies on the tensorization of a first--degree FEM in $\Omega$ with a suitable $hp$--FEM in the extended variable. For both schemes we derive stability and error estimates.
\end{abstract}

\begin{keywords}
space--fractional wave equation, fractional diffusion, nonlocal operators, regularity estimates, fully discrete methods, finite elements, stability, error estimates.
\end{keywords}

\begin{AMS}
26A33,  
35J70, 	
35R11,  
65M12,  
65M15.  
65M60.	
\end{AMS}

\section{Introduction}
\label{sec:introduccion}

We are interested in the numerical approximation of an initial boundary value problem for a space--fractional wave equation. Let $\Omega$ be an open and bounded domain in $\R^n$ ($n\ge1$) with boundary $\partial\Omega$. Given $s \in (0,1)$, a forcing function $f$, and initial data $g$ and $h$, we seek $u$ such that
\begin{equation}
\label{eq:fractional_wave}
\begin{dcases}
  \partial_{t}^2 u + \mathcal{L}^s u  = f \ \text{ in } \Omega\times(0,T),
  \\
  u(0)  = g \ \text{ in } \Omega, \quad \partial_t u(0) = h \ \text{ in } \Omega.
\end{dcases}
\end{equation}
Here, $\mathcal{L}$ denotes the linear, elliptic, self--adjoint, second order, differential operator
\begin{equation*}
\label{second_order}
 \mathcal{L} w = - \DIV_{x'} (A \GRAD_{x'} w ) + c w,
\end{equation*}
supplemented with homogeneous Dirichlet boundary conditions. The coefficient $A \in C^{0,1}(\Omega,\GL(n,\R))$ is symmetric and uniformly positive definite and $0 \leq c \in L^\infty(\Omega)$. By $\calLs$, with $s \in (0,1)$, we denote the \emph{spectral} fractional powers of the operator $\calL$.

One of the most common and studied physical processes is diffusion: the tendency of a substance to evenly spread into an available space. Classical models of diffusion lead to well--known models and even better studied equations. However, in recent times, it has become evident that many of the assumptions that lead to these models are not always satisfactory or even realistic: memory, heterogeneity or a multiscale structure might violate them. In this setting, the assumption of locality does not hold and to describe diffusion one needs to resort to nonlocal operators. Different models of diffusion have been proposed, fractional diffusion being one of them. An incomplete list of problems where fractional diffusion appears includes finance \cite{MR2064019,PH:97,Carr.Geman.ea2002}, turbulent flow \cite{chen23}, quasi--geostrophic flows models \cite{CV:10,Kiselev2007}, models of anomalous thermoviscous behaviors \cite{doi:10.1121/1.1646399}, peridynamics \cite{MR3023366,MR3618711}, and image science \cite{doi:10.1137/070698592,Gatto2015}.

The design of efficient solution techniques for problems involving fractional diffusion is intricate, mainly due to the nonlocal character of $\calLs$ \cite{CS:11,CT:10,CS:07,CDDS:11}. Recently, and in order to overcome such a nonlocal feature, it is has been proved useful in numerical analysis the application of the Caffarelli--Silvestre extension \cite{BMNOSS:17,NOS,NOS3}. When $\mathcal{L} = -\Delta$ and $\Omega = \R^n$, \ie in the case of the Laplacian in the whole space, Caffarelli and Silvestre \cite{CS:07} showed that $\mathcal{L}^s$ can be realized as the Dirichlet-to-Neumann map for an extension problem to the upper half--space $\R_{+}^{n+1}$; the extension corresponds to a nonuniformly elliptic PDE. This result was later extended in \cite{CT:10,CDDS:11,ST:10} to bounded domains $\Omega$ and more general operators, thereby obtaining an extension problem posed on the semi--infinite cylinder $\C:= \Omega \times (0,\infty)$. We shall thus rewrite problem \eqref{eq:fractional_wave} as the following quasi--stationary elliptic problem with a dynamic boundary condition \cite{MR2600998,MR2737788,MR2954615,MR3192423}:
\begin{equation}
\label{eq:wave_alpha_extension}
\begin{dcases}
-\DIV \left( y^{\alpha} \mathbf{A} \GRAD \ue \right) + y^{\alpha} c\ue = 0 \textrm{ in } \C \times(0,T), & \ue = 0 \textrm{ on }\partial_L \C  \times (0,T),\\
d_s \partial_{t}^2 \ue + \partial_{\nu}^{\alpha} \ue  = d_s f \textrm{ on } (\Omega \times \{ 0\}) \times (0,T),
\end{dcases}
\end{equation}
with the initial conditions
\begin{equation}
\label{eq:initial_cond}
 \ue = g \textrm{ on } \Omega \times \{ 0\}, ~t=0, \quad \partial_t \ue = h \textrm{ on } \Omega \times \{ 0\}, ~t=0,
\end{equation}
where $\partial_L \C= \partial \Omega \times [0,\infty)$ corresponds to the lateral boundary of $\C$, $\alpha =1-2s \in (-1,1)$, $d_s=2^\alpha \Gamma(1-s)/\Gamma(s)$ and the conormal exterior derivative of $\ue$ at $\Omega \times \{ 0 \}$ is
\begin{equation}
\label{eq:conormal_derivative}
\partial_{\nu}^{\alpha} \ue = -\lim_{y \rightarrow 0^+} y^\alpha \ue_y;
\end{equation}
the limit must be understood in the sense of distributions \cite{CS:07,CDDS:11,ST:10}. We will call $y$ the \emph{extended variable} and the dimension $n+1$ in $\R_+^{n+1}$ the \emph{extended dimension} of problem \eqref{eq:wave_alpha_extension}--\eqref{eq:initial_cond}. Finally, $\mathbf{A} =  \diag \{A,1\}  \in C^{0,1}(\C,\GL(n+1,\R))$. With the solution $\ue$ to the extension problem \eqref{eq:wave_alpha_extension}--\eqref{eq:initial_cond} at hand, we can find the solution to \eqref{eq:fractional_wave} via \cite{MR2600998,CT:10,CS:07,CDDS:11,MR2737788,MR2954615,ST:10,MR3192423}:
\[
 u = \ue|_{y=0}.
\]

To the best of the authors’ knowledge this is the first work that provides a comprehensive treatment of efficient solution techniques for the space--fractional wave equation \eqref{eq:fractional_wave}. In \eqref{eq:fractional_wave}, $\mathcal{L}^s$ denotes the \emph{spectral} fractional powers of the operator $\mathcal{L}$. Recently, problem \eqref{eq:fractional_wave} has been considered in the literature but with 
$\mathcal{L}^s = (-\Delta)^s$ being the \emph{integral} fractional Laplace operator \cite{Landkof}: the work \cite{ABB2} proposes a discrete scheme that is based on standard Galerkin finite elements for space discretization and the convolution quadrature approach for the discretization in time. We immediately comment that the \emph{spectral} and \emph{integral} definitions of the fractional Laplace operator do not coincide. In fact, as shown in \cite{MR3246044} their difference is positive and positivity preserving. This, in particular, implies that the boundary behavior of the respective solutions is quite different \cite{MR3489634,Grubb}.

The outline of this paper is as follows. In section \ref{sec:Prelim} we introduce some terminology used throughout this work. We recall the definition of the fractional powers of elliptic operators via spectral theory in section \ref{sub:fractional_L}, and in section \ref{sub:CaffarelliSilvestre} we briefly describe their localization via the Caffarelli--Silvestre extension and introduce the functional framework that is suitable for studying problem \eqref{eq:wave_alpha_extension}. In section \ref{sec:wellposedness}, we review existence and uniqueness results together with energy--estimates for problems \eqref{eq:fractional_wave} and \eqref{eq:wave_alpha_extension}. In section \ref{sec:Regularity} we present space, time and space--time regularity results for the solution of problem \eqref{eq:wave_alpha_extension}. The numerical analysis for problem \eqref{eq:wave_alpha_extension} begins in section \ref{sec:truncation} where we introduce a truncated problem on the bounded cylinder $\C_{\Y} = \Omega \times (0,\Y)$ and study some properties of its solution. In section \ref{sec:space_time_discretization} we preset two fully discrete schemes for the truncated version of \eqref{eq:wave_alpha_extension} studied in  section \ref{sec:truncation}. Both of them are based on the scheme of \cite{BMNOSS:17} for space discretization. For time discretization we consider an implicit finite difference discretization scheme and the so--called leapfrog scheme. We derive stability and a priori error estimates for the proposed schemes for all $s \in (0,1)$. In section~\ref{sec:numerics} we comment on some implementation details pertinent to the problem at hand and present numerical experiments in 1D and 2D.

\section{Notation and preliminaries}
\label{sec:Prelim}

Throughout this work $\Omega$ is an open, bounded, and convex polytopal subset of $\R^n$ ($n\geq1$) with boundary $\partial\Omega$. We define the semi--infinite cylinder $\C := \ \Omega \times (0,\infty)$ and its lateral boundary $\partial_L \C  := \partial \Omega \times [0,\infty)$. For $\Y>0$, we define the truncated cylinder with base $\Omega$ and height $\Y$ as $\C_\Y := \Omega \times (0, \Y)$; its lateral boundary is denoted by $\partial_L \C_{\Y}  = \partial \Omega \times (0,\Y)$. If $x \in \R^{n+1}$, we write 
$
x =  (x',y),
$
with $x' \in \R^n$ and $y\in\R$. 

For an open set $D \subset \R^n$ ($n \geq 1$), if $\omega$ is a weight and $p \in (1,\infty)$, we denote the Lebesgue space of $p$-integrable functions with respect to the measure $\omega \diff x$ by $L^p(\omega,D)$ \cite{HKM,Kufner80,Turesson}. Similar notation will be used for weighted Sobolev spaces. If $T >0$ and $\phi: D \times(0,T) \to \R$, we consider $\phi$ as a function of $t$ with values in a Banach space $X$,
$
 \phi:(0,T) \ni t \mapsto  \phi(t) \equiv \phi(\cdot,t) \in X
$.
For $1 \leq p \leq \infty$, $L^p( 0,T; X)$ is the space of $X$-valued functions whose norm in $X$ is in $L^p(0,T)$. This is a Banach space for the norm
\[
  \| \phi \|_{L^p( 0,T;X)} = \left( \int_0^T \| \phi(t) \|^p_X \diff t \right)^{\hspace{-0.1cm}\frac{1}{p}} 
  , \quad 1 \leq p < \infty, \quad
  \| \phi \|_{L^\infty( 0,T;X)} = \esssup_{t \in (0,T)} \| \phi(t) \|_X.
\]

Whenever $X$ is a normed space, $X'$ denotes its dual and $\|\cdot\|_{X}$ its norm. If, in addition, $Y$ is a normed space, we write $X \hookrightarrow Y$ to indicate continuous embedding.
The relation $a \lesssim b$ means $a \leq Cb$, with a constant $C$ that neither depends on $a$ or $b$. The value of $C$ might change at each occurrence.

The next result, that follows from Young's inequality for convolutions, will be instrumental in the analysis that we will perform.
\begin{lemma}[continuity]
\label{le:continuity}
If $g \in L^2(0,T)$ and $\phi \in L^1(0,T)$, then the operator
\[
 g \mapsto \Phi, \qquad \Phi(t) = \phi \star g (t) = \int_0^t \phi(t-r) g(r) \diff r
\]
is continuous from $L^2(0,T)$ into itself and 
$
 \| \Phi \|_{L^2(0,T)} \leq \| \phi \|_{L^1(0,T)} \| g \|_{L^2(0,T)}.
$
\end{lemma}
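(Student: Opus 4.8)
The plan is to reduce the claim to the classical Young convolution inequality on the whole real line. First I would extend both functions by zero outside $(0,T)$: set $\tilde{g}$ and $\tilde{\phi}$ to agree with $g$ and $\phi$ on $(0,T)$ and to vanish on $\R \setminus (0,T)$. Then $\tilde{g} \in L^2(\R)$ with $\|\tilde{g}\|_{L^2(\R)} = \|g\|_{L^2(0,T)}$, and similarly $\tilde{\phi} \in L^1(\R)$ with $\|\tilde{\phi}\|_{L^1(\R)} = \|\phi\|_{L^1(0,T)}$.

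The key observation is that, for $t \in (0,T)$, the finite convolution defining $\Phi$ coincides with the full-line convolution $(\tilde{\phi} \star \tilde{g})(t) = \int_{\R} \tilde{\phi}(t-r)\tilde{g}(r) \diff r$. Indeed, the integrand is supported on $\{ r : 0 < r < T \} \cap \{ r : 0 < t - r < T \}$, which for $0 < t < T$ reduces to $(0,t)$; on this set $\tilde{\phi}(t-r) = \phi(t-r)$ and $\tilde{g}(r) = g(r)$. With this identification I would invoke Young's inequality for convolutions with exponents $p = 2$, $q = 1$, and target exponent $2$, which satisfy the admissibility relation $\tfrac{1}{2} + \tfrac{1}{1} = 1 + \tfrac{1}{2}$, obtaining $\|\tilde{\phi} \star \tilde{g}\|_{L^2(\R)} \le \|\tilde{\phi}\|_{L^1(\R)} \|\tilde{g}\|_{L^2(\R)}$. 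Restricting the left-hand side to $(0,T)$ only decreases the norm, so $\|\Phi\|_{L^2(0,T)} \le \|\tilde{\phi} \star \tilde{g}\|_{L^2(\R)} \le \|\phi\|_{L^1(0,T)} \|g\|_{L^2(0,T)}$, which is the asserted bound. Since $g \mapsto \Phi$ is manifestly linear, this estimate shows it is a bounded, hence continuous, operator from $L^2(0,T)$ into itself.

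This argument is entirely routine; the only point requiring a little care is the support bookkeeping that identifies the causal (finite) convolution with the full-line convolution, together with the verification of the Young exponent relation. If one prefers to avoid invoking Young directly, the same estimate follows self-containedly by the standard splitting: writing $|\phi(t-r)| \, |g(r)| = |\phi(t-r)|^{1/2} \cdot |\phi(t-r)|^{1/2} |g(r)|$ and applying the Cauchy--Schwarz inequality in $r$, one gets $|\Phi(t)|^2 \le \|\phi\|_{L^1(0,T)} \int_0^t |\phi(t-r)| \, |g(r)|^2 \diff r$; integrating in $t$ and exchanging the order of integration via Tonelli's theorem, the inner integral $\int_r^T |\phi(t-r)| \diff t$ is bounded by $\|\phi\|_{L^1(0,T)}$, which reproduces the factor $\|\phi\|_{L^1(0,T)}^2$ and yields the bound after taking square roots.
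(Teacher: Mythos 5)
Your proof is correct and takes the same route the paper does: the paper offers no written proof beyond the remark that the lemma ``follows from Young's inequality for convolutions,'' and your zero-extension argument with the support bookkeeping and the exponent check $\tfrac12+\tfrac11=1+\tfrac12$ is precisely the detail that remark leaves implicit. The self-contained Cauchy--Schwarz/Tonelli variant you append is also sound, but it is simply the standard proof of this special case of Young's inequality rather than a genuinely different approach.
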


Finally, since we assume $\Omega$ to be convex, in what follows we will make use, without explicit mention, of the following regularity result \cite{Grisvard}:
\begin{equation}
\label{eq:Omega_regular}
\| w \|_{H^2(\Omega)} \lesssim \| \mathcal{L} w \|_{L^2(\Omega)} \quad \forall w \in H^2(\Omega) \cap H^1_0(\Omega).
\end{equation}


\subsection{Fractional powers of second order elliptic operators}
\label{sub:fractional_L}

We adopt the \emph{spectral} definition for the fractional powers of the operator $\mathcal{L}$. To define  $\mathcal{L}^s$, we begin by noticing that $\calL$ induces the following inner product 
$a_{\Omega}(\cdot,\cdot)$ on $H^1_0(\Omega)$ 
\begin{equation}
\label{eq:blfOmega} 
a_{\Omega}(w,v) = \int_\Omega \left( A \GRAD w \cdot \GRAD v + c w v \right) \diff x',
\end{equation} 
and that $\calL: H^1_0(\Omega) \ni u \mapsto a_{\Omega}(u,\cdot) \in H^{-1}(\Omega)$ is an isomorphism. The eigenvalue problem:
\begin{equation}
\label{eq:eigenpairs}
(\lambda,\varphi) \in \R \times H_0^1(\Omega) \setminus \{ 0\}: \quad    a_{\Omega}( \varphi,v) = \lambda (\varphi,v)_{L^2(\Omega)} \quad \forall v \in H_0^1(\Omega)
\end{equation}
has a countable collection of solutions $\{ \lambda_{\ell}, \varphi_{\ell} \}_{\ell\in \mathbb N} \subset \R_+ \times H_0^1(\Omega)$ with the real eigenvalues enumerated in increasing order, counting multiplicities, and such that, $\{\varphi_{\ell} \}_{\ell=1}^{\infty}$ is an orthonormal basis of $L^2(\Omega)$ and an orthogonal basis of $(H_0^1(\Omega),a_{\Omega}(\cdot,\cdot))$ \cite{BS,Kato}. With these eigenpairs at hand, we introduce, for $s \geq 0$, the fractional Sobolev space
\begin{equation}
\label{def:Hs}
  \Hs = \left\{ w = \sum_{\ell=1}^\infty w_\ell \varphi_\ell: \| w \|^2_{\Hs}:= \sum_{\ell=1}^\infty \lambda_\ell^s |w_\ell|^2 < \infty \right\},
\end{equation}
and denote by $\Hsd$ the dual space of $\Hs$. The duality pairing between the aforementioned spaces will be denoted by $\langle \cdot, \cdot \rangle$. We notice that, if  $s \in (0,\tfrac12)$, $\Hs = H^s(\Omega) = H_0^s(\Omega)$, while, for $s \in (\tfrac12,1)$, $\Hs$ can be characterized by \cite{Lions,McLean,Tartar}
\[
  \Hs = \left\{ w \in H^s(\Omega): w = 0 \text{ on } \partial\Omega \right\}.
\]
If $s = \frac{1}{2}$, we have that $\mathbb{H}^{\frac{1}{2}}(\Omega)$ is the so--called Lions--Magenes space $H_{00}^{\frac{1}{2}}(\Omega)$ \cite{Lions,Tartar}. If $s\in(1,2]$, owing to \eqref{eq:Omega_regular}, we have that $\Hs = H^s(\Omega)\cap H^1_0(\Omega)$ \cite{ShinChan}.

The fractional powers of the operator $\mathcal L$ are thus defined by
\begin{equation}
  \label{def:second_frac}
 \mathcal{L}^s: \Hs \to \Hsd, \quad  \mathcal{L}^s w  := \sum_{\ell=1}^\infty \lambda_\ell^{s} w_\ell \varphi_\ell,  \quad s \in (0,1).
\end{equation} 

\subsection{Weighted Sobolev spaces}
\label{sub:CaffarelliSilvestre}
Both extensions, the one by Caffarelli and Silvestre \cite{CS:07} and the ones in \cite{CT:10,CDDS:11,ST:10} for bounded domains $\Omega$ and general elliptic operators, require us to deal with a local but nonuniformly elliptic problem. To provide an analysis for the latter it is thus suitable to define the weighted space
\begin{equation}
  \label{HL10}
  \HL(y^{\alpha},\C) = \left\{ w \in H^1(y^\alpha,\C): w = 0 \textrm{ on } \partial_L \C\right\}.
\end{equation}
Since $\alpha \in (-1,1)$, $|y|^\alpha$ belongs to the Muckenhoupt class $A_2$ \cite{MR1800316,Muckenhoupt,Turesson}. The following important consequences thus follow immediately: $H^1(y^{\alpha},\C)$ is a Hilbert space and $C^{\infty}(\Omega) \cap H^1(y^{\alpha},\C)$ is dense in $H^1(|y|^{\alpha},\C)$ (cf.~\cite[Proposition 2.1.2, Corollary 2.1.6]{Turesson}, \cite{KO84} and \cite[Theorem~1]{GU}). In addition, as \cite[inequality (2.21)]{NOS} shows, the following \emph{weighted Poincar\'e inequality} holds:
\begin{equation}
\label{Poincare_ineq}
\| w \|_{L^2(y^{\alpha},\C)} \lesssim \| \GRAD w \|_{L^2(y^{\alpha},\C)} \quad \forall w \in \HL(y^{\alpha},\C).
\end{equation}
Thus, $\| \GRAD w \|_{L^2(y^{\alpha},\C)}$ is equivalent to the norm in $\HL(y^{\alpha},\C)$. 

We define the bilinear form $a: \HL(y^{\alpha},\C) \times \HL(y^{\alpha},\C) \rightarrow \mathbb{R}$ by
\begin{equation}
\label{a}
a(w,\phi) :=   \frac{1}{d_s}\int_{\C} y^{\alpha} \left( \mathbf{A}(x) \GRAD w \cdot \GRAD \phi + c(x') w \phi \right) \diff x,
\end{equation}
which is continuous and, owing to \eqref{Poincare_ineq}, coercive on $\HL(y^{\alpha},\C)$. Consequently, it induces an inner product on $\HL(y^{\alpha},\C)$ and the following energy norm:
\begin{equation}
\label{eq:norm-C} 
\normC{w}^2:= a(w,w) \sim \|\nabla w\|^2_{L^2(y^\alpha,\C)}.
\end{equation}

For $w \in H^1( y^{\alpha},\C)$, $\tr w$ denotes its trace onto $\Omega \times \{ 0 \}$. We recall that, for $\alpha = 1-2s$, \cite[Proposition 2.5]{NOS} yields
\begin{equation}
\label{Trace_estimate}
\tr \HL(y^\alpha,\C) = \Hs,
\qquad
  \|\tr w\|_{\Hs} \leq C_{\mathrm{tr}} \normC{w}.
\end{equation}

The seminal work of Caffarelli and Silvestre \cite{CS:07} and its extensions to bounded domains \cite{CT:10,CDDS:11,ST:10} showed that the operator $\mathcal{L}^s$ can be realized as the Dirichlet-to-Neumann map for a nonuniformly elliptic boundary value problem. Namely, if $\mathfrak{U}$ solves
\begin{equation}
\label{eq:extension}
    -\DIV\left( y^\alpha \mathbf{A} \GRAD \mathfrak{U} \right) + c y^\alpha  \mathfrak{U}= 0  \text{ in } \C, \quad
    \mathfrak{U}= 0  \text{ on } \partial_L \C, \quad
    \partial_\nu^\alpha \mathfrak{U} = d_s f  \text{ on } \Omega \times \{0\},
\end{equation}
where $\alpha = 1-2s$, $\partial_\nu^\alpha \mathfrak{U} = -\lim_{y\downarrow 0} y^\alpha \mathfrak{U}_y$ and $d_s = 2^\alpha \Gamma(1-s)/\Gamma(s)$ is a normalization constant, then $\mathfrak{u} = \tr \mathfrak{U} \in \Hs$ solves
\begin{equation}
\label{eq:fraclap}
\mathcal{L}^s \mathfrak{u} = f.
\end{equation}

\section{Well--posedness and energy estimates} 
\label{sec:wellposedness}
In this section we briefly review the results of \cite{OS:17} regarding the existence and uniqueness of weak solutions for problems \eqref{eq:fractional_wave} and \eqref{eq:wave_alpha_extension}--\eqref{eq:initial_cond}. We also provide basic energy estimates.

\subsection{The fractional wave equation}
\label{sub:existunique}

We assume that the data of problem \eqref{eq:fractional_wave} is such that $f \in L^2(0,T;L^2(\Omega))$, $g \in \Hs$ and $h \in L^2(\Omega)$ and define
\begin{equation}
\label{eq:Lambda}
\Lambda(f,g,h):= \| f \|_{L^2(0,T;L^2(\Omega))} + \| g \|_{\Hs} + \| h \|_{L^2(\Omega)}.
\end{equation}

\begin{definition}[weak solution for \eqref{eq:fractional_wave}]
\label{def:weak_2}
We call $u \in L^2(0,T;\Hs)$, with $\partial_t u \in L^2(0,T;L^2(\Omega))$ and $\partial_{t}^2 u \in L^2(0,T;\Hsd)$, a weak solution of problem \eqref{eq:fractional_wave} if $u(0) = g$, $\partial_t u(0) = h$ and a.e. $t \in (0,T)$,
\[
  \langle \partial_t^2 u , v \rangle + \langle \mathcal{L}^s u , v \rangle
  = \langle f , v \rangle \quad \forall v \in \Hs,
\]
where $\langle \cdot , \cdot \rangle$ denotes the the duality pairing between $\Hs$ and $\Hsd$.
\end{definition}

The following remark is in order.

\begin{remark}[initial conditions]\rm
Since a weak solution $u$ of \eqref{eq:fractional_wave} satisfies that $u \in L^2(0,T;\Hs)$, $\partial_t u \in L^2(0,T;L^2(\Omega))$ and $\partial_{t}^2 u \in L^2(0,T;\Hsd)$, an application of \cite[Lemma 7.3]{MR3014456} reveals that $u \in C([0,T]; L^2(\Omega))$ and $\partial_t u \in C([0,T]; \Hsd)$. The initial conditions involved in Definition \ref{def:weak_2} are thus appropriately defined.
\label{rem:initial_cond_2}
\end{remark}

\begin{theorem}[well--posedness]
Given $s \in (0,1)$, $f \in L^2(0,T;L^2(\Omega))$, $g \in \Hs$ and $h \in L^2(\Omega)$, problem \eqref{eq:fractional_wave} has a unique weak solution. In addition, 
\begin{equation}
\label{eq:energy_estimate_2}
\| u\|_{L^{\infty}(0,T;\Hs)} + \| \partial_t u\|_{L^{\infty}(0,T;L^2(\Omega))} \lesssim \Lambda(f,g,h),
\end{equation}
where the hidden constant is independent of the problem data.
\label{thm:wp_2}
\end{theorem}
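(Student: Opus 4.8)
The plan is to prove existence via a Galerkin approximation in the eigenbasis $\{\varphi_\ell\}$ of $\calL$, then pass to the limit using energy estimates, and finally establish uniqueness. The key simplification is that the spectral decomposition \eqref{def:second_frac} diagonalizes $\calL^s$, so the weak formulation in Definition~\ref{def:weak_2}, tested against $v=\varphi_\ell$, decouples into a countable family of scalar ODEs. Writing $u(t)=\sum_\ell u_\ell(t)\varphi_\ell$, $f(t)=\sum_\ell f_\ell(t)\varphi_\ell$, $g=\sum_\ell g_\ell\varphi_\ell$, $h=\sum_\ell h_\ell\varphi_\ell$, the equation reduces to the harmonic-oscillator ODEs
\begin{equation}
\label{eq:ode_modes}
\ddot{u}_\ell(t) + \lambda_\ell^s\, u_\ell(t) = f_\ell(t),\qquad u_\ell(0)=g_\ell,\quad \dot u_\ell(0)=h_\ell,
\end{equation}
each of which has a unique solution given explicitly by Duhamel's formula with frequency $\omega_\ell:=\lambda_\ell^{s/2}$.

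First I would solve \eqref{eq:ode_modes} exactly, obtaining
\[
u_\ell(t) = g_\ell \cos(\omega_\ell t) + \frac{h_\ell}{\omega_\ell}\sin(\omega_\ell t) + \frac{1}{\omega_\ell}\int_0^t \sin\bigl(\omega_\ell(t-r)\bigr) f_\ell(r)\,\diff r.
\]
The natural conserved/controlled quantity at the modal level is the energy $E_\ell(t):=|\dot u_\ell(t)|^2 + \lambda_\ell^s |u_\ell(t)|^2$; multiplying \eqref{eq:ode_modes} by $\dot u_\ell$ gives $\tfrac12\frac{\diff}{\diff t}E_\ell = f_\ell \dot u_\ell$, and a Gronwall/Young argument bounds $\sup_{t}E_\ell(t)$ by $E_\ell(0)+\|f_\ell\|_{L^2(0,T)}^2$ up to a $T$-dependent constant. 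Summing over $\ell$ and recalling the norm definitions $\|u\|_{\Hs}^2=\sum_\ell\lambda_\ell^s|u_\ell|^2$ from \eqref{def:Hs}, $\|\partial_t u\|_{L^2(\Omega)}^2=\sum_\ell|\dot u_\ell|^2$, and $\|g\|_{\Hs}^2=\sum_\ell\lambda_\ell^s|g_\ell|^2$, $\|h\|_{L^2(\Omega)}^2=\sum_\ell|h_\ell|^2$, this yields precisely the a priori bound \eqref{eq:energy_estimate_2} with $\Lambda(f,g,h)$ as defined in \eqref{eq:Lambda}. The convergence of the summed series is exactly the statement that the finite Galerkin truncations $u^N=\sum_{\ell\le N}u_\ell\varphi_\ell$ form a Cauchy sequence in $L^\infty(0,T;\Hs)$ with $\partial_t u^N$ Cauchy in $L^\infty(0,T;L^2(\Omega))$, so the limit $u$ has the required regularity.

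Next I would verify that the limit $u$ is a weak solution in the sense of Definition~\ref{def:weak_2}. Linearity of the weak formulation lets me test against each $\varphi_\ell$ and recover \eqref{eq:ode_modes} in the distributional sense; the identity $\langle\calL^s u,v\rangle=\sum_\ell\lambda_\ell^s u_\ell v_\ell$ together with Lemma~\ref{le:continuity} (applied to the Duhamel convolution with kernel $\phi_\ell(t)=\omega_\ell^{-1}\sin(\omega_\ell t)$) controls the forcing contribution and also shows $\partial_t^2 u\in L^2(0,T;\Hsd)$ by bounding $\sum_\ell\lambda_\ell^{-s}\|\ddot u_\ell\|_{L^2(0,T)}^2$. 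The attainment of the initial conditions is justified by Remark~\ref{rem:initial_cond_2}, which gives $u\in C([0,T];L^2(\Omega))$ and $\partial_t u\in C([0,T];\Hsd)$.

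Uniqueness follows from linearity: if $u$ is a weak solution with $f=g=h=0$, then every mode satisfies \eqref{eq:ode_modes} with zero data, whence $u_\ell\equiv0$ for all $\ell$ and thus $u\equiv0$; equivalently, the energy estimate \eqref{eq:energy_estimate_2} applied to the difference of two solutions forces them to coincide. I expect the main obstacle to be the rigorous justification that the termwise energy identity at the Galerkin level passes to the limit to give a genuine weak solution, i.e.\ controlling $\partial_t^2 u$ in $L^2(0,T;\Hsd)$ and interchanging the infinite sum with the time integration and duality pairing; this is where Lemma~\ref{le:continuity} and the density of smooth functions are essential, and where one must be careful that the low-frequency factor $\omega_\ell^{-1}$ in Duhamel's formula does not spoil the $\Hs$-summability (it does not, since $\lambda_\ell\ge\lambda_1>0$ by coercivity of $a_\Omega$).
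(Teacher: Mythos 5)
Your proposal is correct and follows essentially the approach the paper invokes: the paper's proof is only a pointer to the standard Galerkin technique of Evans/Lions, and your argument is precisely that technique carried out in the eigenbasis of $\calL$, where the Galerkin system decouples into the scalar oscillator ODEs whose explicit Duhamel solution the paper itself records later in \eqref{eq:u_k_explicit}. The modal energy identity summed over $\ell$, the bound on $\sum_\ell\lambda_\ell^{-s}\|\ddot u_\ell\|_{L^2(0,T)}^2$ for the $L^2(0,T;\Hsd)$ regularity of $\partial_t^2 u$, and the mode-by-mode uniqueness are all sound, so you have in effect supplied the details the paper omits.
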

\begin{proof}
The desired results can be obtained by slightly modifying the arguments, based on a Galerkin technique, of \cite{Evans,Lions,MR3014456}. 
\end{proof}

\subsection{The extended fractional wave equation}
\label{sub:extended}

We consider the following notion of weak solution for problem \eqref{eq:wave_alpha_extension}--\eqref{eq:initial_cond}.

\begin{definition}[extended weak solution]
\label{def:weak_wave}
We call $\ue \in L^{\infty}(0,T;\HL(y^{\alpha},\C))$, with $\tr \partial_t \ue \in L^{\infty}(0,T;L^2(\Omega))$ and $\tr \partial_{t}^{2} \ue \in L^2(0,T;\Hsd)$, a weak solution of problem \eqref{eq:wave_alpha_extension}--\eqref{eq:initial_cond} if $\tr \ue(0) = g$, $\tr\partial_t \ue(0) = h$ and, for a.e. $t \in (0,T)$,
\begin{equation}
\label{eq:weak_wave}
  \langle \tr \partial_t^{2} \ue , \tr \phi \rangle + a(\ue,\phi) = \langle f , \tr \phi \rangle \quad \forall \phi \in \HL(y^\alpha,\C),
\end{equation}
where $\langle \cdot , \cdot \rangle$ denotes the the duality pairing between $\Hs$ and $\Hsd$ and the bilinear form $a$ is defined as in \eqref{a}.
\end{definition}

\begin{remark}[dynamic boundary condition] \rm
Problem \eqref{eq:weak_wave} is an elliptic problem with the following dynamic boundary condition: $\partial_{\nu}^{\alpha} \ue = d_s(f - \tr  \partial_t^{2} \ue)$ on $\Omega \times \{0\}$.
\end{remark}

We present the following important localization result \cite{MR2600998,MR3393253,CT:10,CS:07,CDDS:11,MR2737788,MR2954615,ST:10,MR3192423}.

\begin{theorem}[Caffarelli--Silvestre extension property]
\label{thm:CS_NOT_ST}
Let $s \in (0,1)$. If $f$, $g$ and $h$ are as in Theorem~\ref{thm:wp_2}, then the unique weak solution of problem \eqref{eq:fractional_wave}, in the sense of Definition \ref{def:weak_2} satisfies that $u = \tr \ue$, where $\ue$ denotes the unique weak solution to problem \eqref{eq:wave_alpha_extension} in the sense of Definition \ref{def:weak_wave}.
\end{theorem}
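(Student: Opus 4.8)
The plan is to establish the equivalence $u = \tr \ue$ by using the spectral representation of both problems and showing they reduce to the same family of decoupled ODEs in time, indexed by the eigenvalues $\lambda_\ell$ of $\calL$. First I would expand the weak solution $u$ of \eqref{eq:fractional_wave} in the eigenbasis, writing $u(x',t) = \sum_{\ell=1}^\infty u_\ell(t) \varphi_\ell(x')$. Testing Definition~\ref{def:weak_2} with $v = \varphi_\ell$ and using \eqref{def:second_frac}, each coefficient $u_\ell$ solves the scalar second--order ODE $\ddot u_\ell + \lambda_\ell^s u_\ell = f_\ell$ with $f_\ell(t) = (f(t),\varphi_\ell)_{L^2(\Omega)}$, subject to $u_\ell(0) = g_\ell$ and $\dot u_\ell(0) = h_\ell$. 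This harmonic--oscillator system has the explicit Duhamel solution
\[
 u_\ell(t) = g_\ell \cos(\lambda_\ell^{s/2} t) + h_\ell \frac{\sin(\lambda_\ell^{s/2} t)}{\lambda_\ell^{s/2}} + \frac{1}{\lambda_\ell^{s/2}}\int_0^t \sin\!\big(\lambda_\ell^{s/2}(t-r)\big) f_\ell(r) \diff r,
\]
and Lemma~\ref{le:continuity} together with \eqref{eq:energy_estimate_2} controls the resulting series so that $u$ is the genuine weak solution.

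Next I would turn to the extension problem and exploit separation of variables in the extended variable $y$. The key structural fact underlying the Caffarelli--Silvestre extension is that the solution of \eqref{eq:extension} for a single eigenfunction datum $\varphi_\ell$ factorizes as $\ue = \varphi_\ell(x')\, \psi_\ell(y)$, where $\psi_\ell$ solves the Bessel--type ODE $\psi_\ell'' + \frac{\alpha}{y}\psi_\ell' - \lambda_\ell \psi_\ell = 0$ with $\psi_\ell(0)=1$ and the prescribed decay as $y \to \infty$; the explicit solution is given in terms of modified Bessel functions and satisfies the normalization $-\lim_{y\downarrow0} y^\alpha \psi_\ell'(y) = d_s \lambda_\ell^{s}$, which is precisely the content of \eqref{eq:fraclap} at the spectral level. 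Writing the extended weak solution as $\ue(x',y,t) = \sum_{\ell=1}^\infty u_\ell(t)\,\varphi_\ell(x')\,\psi_\ell(y)$ and inserting this ansatz into \eqref{eq:weak_wave}, I would test with $\phi = \varphi_k(x')\,\psi_k(y)$. Using the orthogonality of the $\varphi_\ell$ in $L^2(\Omega)$, the definition \eqref{a} of the bilinear form, and the normalization of $\partial_\nu^\alpha$, the bilinear form $a(\ue,\phi)$ collapses to $\lambda_k^s u_k(t)$ and the boundary inertial term to $\ddot u_k(t)$, recovering exactly the same scalar ODE system as for $u$. Since $\tr\psi_\ell = \psi_\ell(0) = 1$, taking the trace yields $\tr \ue = \sum_\ell u_\ell(t)\varphi_\ell = u$.

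To make this rigorous rather than formal, I would appeal to the uniqueness asserted for both problems: by Theorem~\ref{thm:wp_2} problem \eqref{eq:fractional_wave} has a unique weak solution, and one checks (or cites \cite{OS:17}) that \eqref{eq:wave_alpha_extension} likewise has a unique extended weak solution. It therefore suffices to verify that $\tr \ue$, for the $\ue$ built by the spectral ansatz above, satisfies Definition~\ref{def:weak_2}; equivalently, that the separated solution I constructed is the extended weak solution in the sense of Definition~\ref{def:weak_wave}. The convergence of the series in $\HL(y^\alpha,\C)$, and the membership $\tr\partial_t^2\ue \in L^2(0,T;\Hsd)$, follow from the energy estimate \eqref{eq:energy_estimate_2} combined with the trace identity \eqref{Trace_estimate} and the fact, established in the steady--state theory of \cite{NOS}, that $\normC{\varphi_\ell\psi_\ell}^2 = \lambda_\ell^s$, so that the $\HL$--norm of a truncation matches the $\Hs$--norm of the corresponding truncation of $u$.

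The main obstacle I anticipate is not the formal separation of variables but the justification that the spectral series genuinely represents the weak solution in the correct function spaces, uniformly in $t$ and with the boundary term interpreted distributionally. In particular, interchanging the time derivatives with the infinite sum, and verifying that the conormal derivative $\partial_\nu^\alpha\ue$ converges in $\Hsd$ to $d_s(f - \tr\partial_t^2\ue)$, requires the decay properties of $\psi_\ell$ and its weighted derivative $y^\alpha\psi_\ell'$ to be controlled uniformly in $\ell$. This is exactly where the quantitative identities from the stationary Caffarelli--Silvestre theory \cite{CS:07,CDDS:11,NOS} enter, and the cleanest route is to reduce to those known stationary results eigenmode by eigenmode and then pass to the limit, rather than to re--prove the Dirichlet--to--Neumann correspondence from scratch.
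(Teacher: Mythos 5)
The paper itself offers no proof of Theorem~\ref{thm:CS_NOT_ST}: it is stated as a localization result imported from the cited literature. Your spectral construction is nonetheless exactly the mechanism the paper records in its solution--representation section: your ansatz is \eqref{eq:exactforms}, your Bessel--type ODE is \eqref{eq:psik}, the Duhamel formula is \eqref{eq:u_k_explicit}, and the normalization $-\lim_{y\downarrow 0}y^{\alpha}\psi_k'(y)=d_s\lambda_k^{s}$ that you invoke is precisely \eqref{eq:limit_psik}. In substance your argument is the standard proof of the extension property in this setting, and your strategy of reducing to the stationary eigenmode identities and then invoking uniqueness of both weak solutions is the right one.

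One step does need an actual argument rather than the sketch you give. Verifying \eqref{eq:weak_wave} only against the separated test functions $\phi=\varphi_k(x')\psi_k(y)$ does not establish the identity for \emph{every} $\phi\in\HL(y^{\alpha},\C)$, which is what Definition~\ref{def:weak_wave} demands before you may appeal to uniqueness of the extended problem; the family $\{\varphi_k\psi_k\}$ is not total in $\HL(y^{\alpha},\C)$. The fix is short but should be stated: split an arbitrary $\phi$ as $\calE_{\alpha}\tr\phi+(\phi-\calE_{\alpha}\tr\phi)$. On the zero--trace part the identity holds because each mode $\varphi_k\psi_k$ satisfies the interior equation in \eqref{eq:extension} with vanishing lateral trace, so $a(\ue(\cdot,t),\phi-\calE_{\alpha}\tr\phi)=0$ and the boundary pairings vanish; on the extension part, writing $\tr\phi=\sum_k\phi_k\varphi_k$ and integrating by parts in $y$ against \eqref{eq:psik}, the identities \eqref{eq:int_a_b}, \eqref{eq:exp_decay_psik} and \eqref{eq:limit_psik} give $a(\ue,\calE_{\alpha}\tr\phi)=\sum_k\lambda_k^{s}u_k(t)\phi_k$, so \eqref{eq:weak_wave} reduces to the scalar ODEs \eqref{eq:uk}. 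With that amendment, together with the convergence control you already describe via $\normC{\varphi_k\psi_k}^2=\lambda_k^{s}$, the proof is complete.
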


We now present the well–posedness of problem \eqref{eq:weak_wave} together with energy estimates for its solution.

\begin{theorem}[well--posedness] 
\label{thm:energy_wave_2}
Given $s \in (0,1)$, $f \in L^2(0,T;L^2(\Omega))$, $g \in \Hs$ and $h \in L^2(\Omega)$, then problem \eqref{eq:wave_alpha_extension}--\eqref{eq:initial_cond} has a unique weak solution in the sense of Definition \ref{def:weak_wave}. In addition,
\end{theorem}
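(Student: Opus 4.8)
The plan is to establish well--posedness and the energy estimate for the extended problem \eqref{eq:weak_wave} via the standard Galerkin method, transferring the structure already available for the fractional wave equation \eqref{eq:fractional_wave} through the trace identity of Theorem~\ref{thm:CS_NOT_ST}. First I would construct Galerkin approximations by projecting onto finite--dimensional subspaces of $\HL(y^\alpha,\C)$; a natural choice exploits the separation of variables inherent to the extension, writing approximate solutions as finite sums $\ue_m(x',y,t) = \sum_{\ell=1}^m v_\ell(y,t)\varphi_\ell(x')$ where $\{\varphi_\ell\}$ are the eigenfunctions from \eqref{eq:eigenpairs}. Substituting into \eqref{eq:weak_wave} and testing against $\varphi_k \psi(y)$ decouples the spatial eigenmodes and reduces the problem to a family of one--dimensional (in $y$) second--order--in--time evolution equations, whose well--posedness follows from standard ODE/linear evolution theory in the weighted space $L^2(y^\alpha,(0,\infty))$.

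Next I would derive the a priori energy estimate by the usual device of testing \eqref{eq:weak_wave} with $\phi = \partial_t \ue$. This produces the identity
\begin{equation}
\label{eq:energy_identity_plan}
\frac{1}{2}\frac{\diff}{\diff t}\left( \| \tr \partial_t \ue \|_{L^2(\Omega)}^2 + \normC{\ue}^2 \right) = \langle f, \tr \partial_t \ue \rangle,
\end{equation}
using that $a(\cdot,\cdot)$ is symmetric and that $\langle \tr \partial_t^2 \ue, \tr \partial_t \ue\rangle = \tfrac12\frac{\diff}{\diff t}\|\tr\partial_t\ue\|_{L^2(\Omega)}^2$. Integrating in time, bounding the right--hand side by Cauchy--Schwarz and Young's inequality, controlling $\|\tr\partial_t\ue\|_{L^2(\Omega)}$ by its initial value $\|h\|_{L^2(\Omega)}$ plus the forcing, and invoking the trace estimate \eqref{Trace_estimate} together with the norm equivalence \eqref{eq:norm-C} and the initial data bound $\normC{\ue(0)} \sim \|g\|_{\Hs}$, yields the desired control of $\| \ue \|_{L^\infty(0,T;\HL(y^\alpha,\C))}$ and $\|\tr\partial_t\ue\|_{L^\infty(0,T;L^2(\Omega))}$ in terms of $\Lambda(f,g,h)$. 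A Gr\"onwall argument closes the estimate. The same bound, applied to the Galerkin approximations uniformly in $m$, furnishes the weak compactness needed to pass to the limit and produce a weak solution; uniqueness follows by linearity, setting $f=g=h=0$ in the energy identity.

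The cleaner alternative, which I would actually favor, is to sidestep a direct Galerkin construction and instead \emph{define} $\ue$ as the Caffarelli--Silvestre extension of the solution $u$ to \eqref{eq:fractional_wave}, whose existence, uniqueness, and energy bound \eqref{eq:energy_estimate_2} are already guaranteed by Theorem~\ref{thm:wp_2}. Expanding $u(t) = \sum_\ell u_\ell(t)\varphi_\ell$ and solving the decoupled extension ODEs in $y$ modewise gives $\ue$ explicitly; Theorem~\ref{thm:CS_NOT_ST} certifies that this $\ue$ is the weak solution and that $\tr\ue = u$. The energy norm $\normC{\ue}^2$ then reduces, mode by mode, to a weighted sum that is comparable to $\sum_\ell \lambda_\ell^s |u_\ell|^2 = \|u\|_{\Hs}^2$ via the trace characterization \eqref{Trace_estimate}, so \eqref{eq:energy_estimate_2} transfers directly to the extended variables.

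I expect the main obstacle to be the low regularity in time: the hypotheses only give $\tr\partial_t^2\ue \in L^2(0,T;\Hsd)$, so the test function $\partial_t\ue$ is not admissible pointwise in $t$ and the formal manipulation leading to \eqref{eq:energy_identity_plan} must be justified on the Galerkin level (where everything is smooth in $t$) and then passed to the limit, rather than performed directly on the weak solution. A related technical point is verifying that the initial conditions $\tr\ue(0)=g$ and $\tr\partial_t\ue(0)=h$ are attained in the appropriate continuous--in--time sense; this parallels Remark~\ref{rem:initial_cond_2} and requires an embedding of the energy class into $C([0,T];\cdot)$, obtainable from the regularity of $\ue$ together with \cite[Lemma 7.3]{MR3014456}. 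Both difficulties are handled by the standard Galerkin--plus--compactness machinery, so the proof can reasonably be compressed to a reference to \cite{Evans,Lions,MR3014456} once the mode--decoupled structure is noted.
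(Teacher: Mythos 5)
Your proposal is correct in outline, but it is worth noting that the paper does not actually prove this theorem: the entire proof is the citation ``See [Theorem 3.11, OS:17].'' Your sketch therefore supplies an argument where the paper supplies none, and both of your routes are consistent with the machinery the paper itself develops later. The Galerkin route is the standard one (and is what the paper invokes for Theorem~\ref{thm:wp_2}); the modewise construction is exactly the solution representation \eqref{eq:exactforms} of Section 3.3, and the key computation you anticipate --- that $\normC{\ue(t)}^2$ reduces to $\sum_k\lambda_k^s\,u_k(t)^2=\|u(t)\|_{\Hs}^2$ via \eqref{eq:int_a_b} and \eqref{eq:limit_psik} --- is precisely the identity the paper uses in the proof of Theorem~\ref{thm:time_regularity}, so the energy bound \eqref{eq:energy_estimate_2} does transfer as you claim. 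You also correctly identify the two genuine technical points: $\partial_t\ue$ is not an admissible test function at the stated regularity, and attainment of the initial conditions needs a continuity-in-time argument.

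Two cautions. First, your ``favored'' alternative must not lean on Theorem~\ref{thm:CS_NOT_ST} to certify that the constructed $\ue$ is \emph{the} weak solution: as stated, that theorem already presupposes existence and uniqueness for Definition~\ref{def:weak_wave}, so invoking it here is circular. The repair is what you half-describe anyway --- verify directly that the explicit series is a weak solution (convergence of the series in $L^\infty(0,T;\HL(y^\alpha,\C))$ follows from the modewise identity above together with \eqref{eq:energy_estimate_2}) and prove uniqueness separately. Second, for uniqueness on the continuous level the Galerkin smoothing you mention does not help (uniqueness must be shown for \emph{every} weak solution, not just the constructed one); the standard device is the Lions/Ladyzhenskaya trick of testing the homogeneous equation with $\phi(t)=\int_t^{t'}\we(\zeta)\,\diff\zeta$ for the difference $\we$ of two solutions, which is admissible at the stated regularity and yields vanishing energy. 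With those two adjustments your argument is a complete substitute for the external citation.
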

\begin{equation}\label{eq:energy_estimate_wave_2}
\| \nabla \ue \|_{L^{\infty}(0,T;L^2(y^{\alpha},\C))} + \| \tr \partial_t \ue \|_{L^{\infty}(0,T;L^2(\Omega))} 
\lesssim \Lambda(f,g,h),
\end{equation}
where the hidden constant is independent of the problem data and $\Lambda(f,g,h)$ is defined as in \eqref{eq:Lambda}.
\begin{proof}
See \cite[Theorem 3.11]{OS:17}. 
\end{proof}

\begin{remark}[initial data]
\label{rem:initial_data} \rm
The initial data $g$ and $h$ of problem \eqref{eq:weak_wave} determine $\ue(0)$ and $\partial_t\ue(0)$ only on $\Omega \times \{ 0\}$ in a trace sense. However, in the analysis that follows it will be necessary to consider their extension to the whole cylinder $\C$. We thus define $\ue(0) = \mathcal{E}_{\alpha}g$ and $\ue_t(0) = \mathcal{E}_{\alpha}h$, where the $\alpha$--harmonic extension operator
\begin{equation}
\label{eq:calE}
 \mathcal{E}_{\alpha}: \Hs \rightarrow \HL(y^{\alpha},\C) 
\end{equation}
is defined as follows: If $w \in \Hs$, then $\we = \mathcal{E}_{\alpha}w \in \HL(y^{\alpha},\C) $ solves
\begin{equation}
\label{eq:mathcalW}
\DIV(y^{\alpha} \mathbf{A} \GRAD \we ) +y^{\alpha}c \we= 0 \text{ in } \C, 
\quad
\we = 0 \text{ on } \partial_L \C, 
\quad
\we = w\text{ on } \Omega \times \{0\}.
\end{equation}
References \cite{CT:10,CDDS:11} provide, for $w \in \Hs$, the estimate 
$\|\GRAD \mathcal{E}_{\alpha} w \|_{L^2(y^{\alpha},\C)} \lesssim \| w \|_{\Hs}$.
%
\end{remark}

\subsection{Solution representation}
\label{subsub:solution:_representation}

In this section we present a solution representation formula for the solution to problem \eqref{eq:weak_wave}. To accomplish this task, we first notice that the solution to problem \eqref{eq:fractional_wave} can be written as $u(x',t) = \sum_{k \in \mathbb{N}} u_k(t) \varphi_k(x')$, where, for $k \in \mathbb{N}$, the coefficient $u_k(t)$ solves 
\begin{equation} 
 \label{eq:uk}
    \partial^{2}_t u_k(t) + \lambda_k^s u_k(t) = f_k(t), \quad t > 0, \quad
     u_k(0) = g_k, \quad \partial_t u_k(0) = h_k,
\end{equation}
with $g_k = (g,\varphi_k)_{L^2(\Omega)}$, $h_k = (h,\varphi_k)_{L^2(\Omega)}$, and $f_{k}(t) = (f(\cdot,t),\varphi_k)_{L^2(\Omega)}$. We recall that the sequence $\{\lambda_k, \varphi_k \}_{k \in \mathbb{N}}$ corresponds to the eigenpairs of the operator $\calL$ and are defined by \eqref{eq:eigenpairs}. Basic computations reveal, for $k \in \mathbb{N}$, that
\begin{equation}
\label{eq:u_k_explicit}
 u_k(t) = g_k \cos\left(\lambda_k^{s/2}t\right) + h_k\lambda_k^{-\frac{s}{2}}\sin\left(\lambda_k^{s/2}t\right) + \lambda_k^{-\frac{s}{2}}\int_0^t f_k(r) \sin\left(\lambda_k^{s/2}(t-r)\right) \diff r.
\end{equation}
With these ingredients at hand, we can write the solution $\ue$ of problem \eqref{eq:weak_wave} as
\begin{equation}
\label{eq:exactforms}
  \ue(x,t) = \sum_{k \in \mathbb{N} } u_k(t) \varphi_k(x') \psi_k(y),
\end{equation}
where, for $\alpha = 1-2s$, the functions $\psi_k$ solve
\begin{equation}
\label{eq:psik}
\begin{dcases}
  \frac{\diff^2}{\diff y^2}\psi_k(y) + \frac{\alpha}{y} \frac{\diff}{\diff y} \psi_k(y) = \lambda_k \psi_k(y) , 
& y \in (0,\infty), 
\\
\psi_k(0) = 1, & \lim_{y\rightarrow \infty} \psi_k(y) = 0.
\end{dcases}
\end{equation}
If $s = \tfrac{1}{2}$, we thus have $\psi_k(y) = \exp(-\sqrt{\lambda_k}y)$ \cite[Lemma 2.10]{CT:10}; more generally, if $s \in (0,1) \setminus \{ \tfrac{1}{2}\}$, then \cite[Proposition 2.1]{CDDS:11}
\begin{equation}
\label{eq:psik_representation}
 \psi_k(y) = c_s (\sqrt{\lambda_k}y)^s K_s(\sqrt{\lambda_k}y),
\end{equation}
where $c_s = 2^{1-s}/\Gamma(s)$ and $K_s$ denotes the modified Bessel function of the second kind. We refer the reader to \cite[Chapter 9.6]{Abra} and \cite[Chapter 7.8]{MR0435697} for a comprehensive treatment of the Bessel function $K_s$. We immediately comment the following property that the function $\psi_k$ satisfies:
\[
  \lim_{s \rightarrow \frac{1}{2}}\psi_k(y)= \exp(-\sqrt{\lambda_k}y) \quad \forall y>0.
\]

In addition, for $a,b \in \mathbb{R}^{+}$, $a < b$, we have \cite[formula (2.33)]{NOS}
\begin{equation}
\label{eq:int_a_b}
 \int_{a}^b y^{\alpha}\left( \lambda_k \psi_k(y)^2 + \psi_k'(y)^2\right) \diff y = y^{\alpha} \psi_k(y) \psi_k'(y)|_{a}^b,
\end{equation}
\cite[formula (2.32)]{NOS}
\begin{equation}
 \label{eq:exp_decay_psik}
 |y^{\alpha} \psi_k(y) \psi_k'(y)| \lesssim \lambda_k^s e^{-\sqrt{\lambda_k}y}, \quad y \geq 1,
\end{equation}
and \cite[formula (2.31)]{NOS}
\begin{equation}
 \label{eq:limit_psik}
 \lim_{y \downarrow 0^+} \frac{y^{\alpha} \psi_k'(y)}{d_s \lambda_k^s} = -1.
\end{equation}

\section{Regularity}
\label{sec:Regularity}

In this section we review and derive space, time, and space--time regularity results for the solution $\ue$ of problem \eqref{eq:weak_wave}.


\subsection{Space regularity}
\label{sub:space_regularity}

To present the space regularity properties of $\ue$, we introduce the weight
\begin{equation}
\label{eq:weight}
\omega_{\beta,\theta}(y) = y^{\beta}e^{\theta y}, \qquad 0 \leq \theta < 2 \sqrt{\lambda_1},
\end{equation}
where $\beta \in \R $ will be specified later. With this weight at hand, we define the norm
\begin{equation}
 \label{eq:weighted_norm}
 \| v \|_{L^2(\omega_{\beta,\theta},\C)} 
:= \left( \int_0^{\infty} \int_{\Omega} \omega_{\beta,\theta}(y) 
          |v(x',y)|^2 \diff x' \diff y
 \right)^{\frac{1}{2}}.
\end{equation}

We now present the following pointwise, in time, bounds for $\ue$ \cite[Theorem 4.5]{OS:17}.

\begin{proposition}[pointwise bounds]
\label{thm:pointwisebounds}
Let $\ue$ solve problem \eqref{eq:wave_alpha_extension}--\eqref{eq:initial_cond} for $s\in (0,1)$. Let $0 \leq \sigma < s$ and $0 \leq \nu < 1+s $. Then, there exists $\kappa > 1$ such that the following estimates hold for all $\ell \in \mathbb{N}_0$:
\begin{align}
\label{eq:partial_y_l+1}
  \| \partial_y^{\ell+1} \ue(\cdot,t) \|_{L^2(\omega_{\alpha+2\ell-2\sigma,\theta}, \C)}^2 &\lesssim \kappa^{2(\ell+1)} (\ell+1)!^2 \| u(\cdot,t) \|_{\mathbb{H}^{\sigma+s}(\Omega)}^2, \\
\label{eq:nablaxpartial_y_l+1}
  \| \GRAD_{x'} \partial_y^{\ell+1} \ue(\cdot,t) \|_{L^2(\omega_{\alpha+2(\ell+1)-2\nu,\theta},\C)}^2 &\lesssim \kappa^{2(\ell+1)} (\ell+1)!^2 \| u(\cdot,t) \|_{\mathbb{H}^{\nu+s}(\Omega)}^2, \\
\label{eq:deltaxpartial_y_l+1}
   \|  \mathcal{L}_{x'}  \partial_y^{\ell+1} \ue(\cdot,t) \|^2_{L^2(\omega_{\alpha+2(\ell+1)-2\nu,\theta},\C)} 
   &\lesssim \kappa^{2(\ell+1)} (\ell+1 )!^2 \| u(\cdot,t)\|_{\mathbb{H}^{1 + \nu + s}(\Omega)}^2.
\end{align}
The hidden constants are independent of $\ue$, $\ell$, and the problem data.
\label{pro:pointwisebounds}
\end{proposition}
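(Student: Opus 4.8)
The plan is to exploit the separation of variables in the representation \eqref{eq:exactforms} and reduce all three bounds to a single one--dimensional weighted estimate on the profiles $\psi_k$. First I would differentiate \eqref{eq:exactforms} termwise in $y$, so that for fixed $t$
\[
\partial_y^{\ell+1}\ue(x,t) = \sum_{k\in\mathbb{N}} u_k(t)\,\varphi_k(x')\,\psi_k^{(\ell+1)}(y),
\]
and likewise for $\GRAD_{x'}\partial_y^{\ell+1}\ue$ and $\mathcal{L}_{x'}\partial_y^{\ell+1}\ue$, using that $\mathcal{L}_{x'}\varphi_k=\lambda_k\varphi_k$. Since $\{\varphi_k\}$ is orthonormal in $L^2(\Omega)$, integrating in $x'$ diagonalizes each norm; for the gradient term I would in addition invoke the uniform ellipticity of $A$ together with the $a_\Omega$--orthogonality of $\{\varphi_k\}$ to bound $\|\GRAD_{x'}w\|_{L^2(\Omega)}^2\lesssim a_\Omega(w,w)=\sum_k\lambda_k|w_k|^2$. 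All three estimates then reduce to controlling, for each $k$,
\[
I_k^{(\ell)}:=\int_0^\infty y^{\beta}e^{\theta y}\,|\psi_k^{(\ell+1)}(y)|^2\,\diff y,
\]
with $\beta=\alpha+2\ell-2\sigma$ in the first case and $\beta=\alpha+2(\ell+1)-2\nu$ in the other two; the prefactors $1$, $\lambda_k$, $\lambda_k^2$ that accompany $I_k^{(\ell)}$ are exactly what turn the right--hand sides into the $\mathbb{H}^{\sigma+s}(\Omega)$, $\mathbb{H}^{\nu+s}(\Omega)$ and $\mathbb{H}^{1+\nu+s}(\Omega)$ norms, once we recall $\|w\|_{\mathbb{H}^r(\Omega)}^2=\sum_k\lambda_k^r|w_k|^2$.

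Next I would rescale. Writing $\psi_k(y)=\psi(\sqrt{\lambda_k}\,y)$, where $\psi$ solves \eqref{eq:psik} with $\lambda_k$ replaced by $1$ and is given by $\psi(z)=c_s z^s K_s(z)$ as in \eqref{eq:psik_representation}, the substitution $z=\sqrt{\lambda_k}\,y$ yields
\[
I_k^{(\ell)}=\lambda_k^{\,s+\sigma}\int_0^\infty z^{\beta}e^{(\theta/\sqrt{\lambda_k})z}\,|\psi^{(\ell+1)}(z)|^2\,\diff z
\]
in the first case (and a factor $\lambda_k^{\,\nu+s-1}$ in the other two), the bookkeeping of the powers of $\lambda_k$ following from $\alpha=1-2s$. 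Crucially, the rescaled exponential rate is $c:=\theta/\sqrt{\lambda_k}\le \theta/\sqrt{\lambda_1}<2$, so $c$ stays strictly below the decay rate $2$ of $|\psi^{(\ell+1)}|^2$ at infinity. It remains to prove the $k$--uniform estimate
\[
\mathcal{J}_\ell(c):=\int_0^\infty z^{\beta}e^{cz}\,|\psi^{(\ell+1)}(z)|^2\,\diff z \lesssim \kappa^{2(\ell+1)}\,(\ell+1)!^{\,2},\qquad 0\le c<2,
\]
with $\kappa>1$ independent of $\ell$ and $k$.

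For the core estimate I would split the integral over $(0,1)$ and $(1,\infty)$. Near the origin I would use the expansion $\psi(z)=A(z^2)+z^{2s}B(z^2)$, with $A,B$ entire and $A(0)=1$, obtained from the series of $K_s$. Differentiating, the singular part contributes $|\psi^{(\ell+1)}(z)|^2\sim z^{4s-2\ell-2}$, so $z^{\beta}|\psi^{(\ell+1)}|^2$ behaves like $z^{\beta+4s-2\ell-2}$; this is integrable on $(0,1)$ precisely when $\beta>2\ell+1-4s$, i.e.\ exactly when $\sigma<s$ (respectively $\nu<1+s$), and the resulting integral is bounded independently of $\ell$, degenerating as $\sigma\uparrow s$ (respectively $\nu\uparrow 1+s$). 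The even analytic part $A(z^2)$ is less singular and is handled analogously via Cauchy estimates, its parity ensuring the requisite integrability even when $\beta<-1$. On $(1,\infty)$ I would use the exponential decay of $K_s$ together with Cauchy estimates on disks of radius $\eta z$, with $\eta$ chosen small enough that the surviving decay rate $2(1-\eta)$ still exceeds $c$ (possible since $c\le\theta/\sqrt{\lambda_1}<2$); the factor $e^{(c-2+2\eta)z}$ then makes the tail finite with no factorial contribution from the integration. The main obstacle is the factorial bookkeeping: one must show that the coefficients from differentiating the $z^{2s}$ branch, $\prod_{j=0}^{\ell}|2s-j|\lesssim C^{\ell+1}(\ell+1)!$, and the $(\ell+1)!\,(\eta z)^{-(\ell+1)}$ arising from the Cauchy estimates on the tail, combine into the claimed $\kappa^{2(\ell+1)}(\ell+1)!^2$ growth with a single $\kappa$ uniform in $\ell$ and $k$ (depending only on $s$, $\theta$ and $\lambda_1$). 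Assembling the two pieces and summing over $k$ then delivers \eqref{eq:partial_y_l+1}--\eqref{eq:deltaxpartial_y_l+1}.
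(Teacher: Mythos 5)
The paper does not actually prove this proposition: it is quoted verbatim from \cite[Theorem 4.5]{OS:17}, whose argument in turn adapts the analytic regularity estimates of \cite{NOS} and \cite{BMNOSS:17}. Your outline reconstructs essentially that same proof --- diagonalization in the eigenbasis $\{\varphi_k\}$, reduction via the rescaling $z=\sqrt{\lambda_k}\,y$ to a single $k$--uniform weighted integral of $|\psi^{(\ell+1)}|^2$ with $\psi(z)=c_sz^sK_s(z)$, and the split of $\psi$ into its even--analytic and $z^{2s}$--branches near the origin combined with Cauchy estimates on growing disks in the tail --- and your bookkeeping (the factors $\lambda_k^{s+\sigma}$, $\lambda_k^{s+\nu}$, $\lambda_k^{1+s+\nu}$, the integrability thresholds $\sigma<s$ and $\nu<1+s$, and the role of $\theta<2\sqrt{\lambda_1}$ in making the tail converge) is correct.
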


The result below shows the spatial analyticity of $\ue$ with respect to the extended variable $y \in (0,\infty)$: $\ue$ belongs to countably normed, power--exponentially weighted Bochner spaces of analytic functions with respect to $y$ taking values in spaces $\mathbb{H}^r(\Omega)$.

\begin{proposition}[space regularity]
\label{TH:regularitygamma2}
Let $\ue$ solve \eqref{eq:wave_alpha_extension}--\eqref{eq:initial_cond} for $s\in(0,1)$. Let $0 \leq \sigma < s$ and $0 \leq \nu < 1+s $. Then, there exists $\kappa > 1$ such that the following regularity estimates hold for all $\ell \in \mathbb{N}_0$: 
\begin{multline}
\| \partial_{y}^{\ell +1 } \ue \|_{L^2(0,T;L^2(\omega_{\alpha + 2 \ell -2\sigma,\theta},\C))}^2  
\lesssim  \kappa^{2(\ell+1)}(\ell+1)!^2 \big(
\| g \|^2_{\mathbb{H}^{\sigma + s}(\Omega)} + 
\| h \|^2_{\mathbb{H}^{\sigma}(\Omega)} 
\\
+ 
\| f \|_{L^2(0,T;\mathbb{H}^{\sigma}(\Omega))}^2 \big),
\label{eq:reg_in_y_gamma_1_gammaeq2} 
\end{multline} 
\begin{multline}
\| \GRAD_{x'} \partial_{y}^{\ell +1 } \ue \|_{L^2(0,T;L^2(\omega_{\alpha + 2 (\ell+1) -2\nu,\theta},\C))}^2  
\lesssim  \kappa^{2(\ell+1)}(\ell+1)!^2 \big(
\| g \|^2_{\mathbb{H}^{\nu + s}(\Omega)} + 
\| h \|^2_{\mathbb{H}^{\nu}(\Omega)} 
\\
+ 
\| f \|_{L^2(0,T;\mathbb{H}^{\nu}(\Omega))}^2 \big).
\label{eq:reg_in_y_gamma_2_gammaeq2} 
\end{multline} 
and
\begin{multline}
\| \mathcal{L}_{x'} \partial_{y}^{\ell +1 } \ue \|_{L^2(0,T;L^2(\omega_{\alpha + 2 (\ell+1) -2\nu,\theta},\C))}^2  
\lesssim  \kappa^{2(\ell+1)}(\ell+1)!^2 \big(
\| g \|^2_{\mathbb{H}^{1+\nu + s}(\Omega)} \\ + 
\| h \|^2_{\mathbb{H}^{1+\nu}(\Omega)} 
+ 
\| f \|_{L^2(0,T;\mathbb{H}^{1+\nu}(\Omega))}^2 \big).
\label{eq:reg_in_y_gamma_3_gammaeq2} 
\end{multline} 
\label{pro:L2_time_bounds}
The hidden constants are independent of $\ue$, $\ell$, and the problem data.
\end{proposition}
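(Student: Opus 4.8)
The plan is to obtain the three estimates by integrating in time the pointwise-in-time bounds of Proposition~\ref{pro:pointwisebounds} and then controlling the resulting $L^2(0,T)$-norm of the solution $u$ to \eqref{eq:fractional_wave} in terms of the problem data. Concretely, integrating \eqref{eq:partial_y_l+1} over $(0,T)$ immediately yields
\[
\| \partial_{y}^{\ell +1 } \ue \|_{L^2(0,T;L^2(\omega_{\alpha + 2 \ell -2\sigma,\theta},\C))}^2 \lesssim \kappa^{2(\ell+1)}(\ell+1)!^2 \int_0^T \| u(\cdot,t) \|_{\mathbb{H}^{\sigma+s}(\Omega)}^2 \diff t,
\]
and likewise \eqref{eq:nablaxpartial_y_l+1} and \eqref{eq:deltaxpartial_y_l+1} reduce \eqref{eq:reg_in_y_gamma_2_gammaeq2} and \eqref{eq:reg_in_y_gamma_3_gammaeq2} to estimating $\int_0^T \| u(\cdot,t)\|_{\mathbb{H}^{\nu+s}(\Omega)}^2\diff t$ and $\int_0^T \| u(\cdot,t)\|_{\mathbb{H}^{1+\nu+s}(\Omega)}^2\diff t$, respectively. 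Thus the whole statement follows once we establish the single modal estimate
\[
\int_0^T \| u(\cdot,t)\|_{\mathbb{H}^{r+s}(\Omega)}^2 \diff t \lesssim \| g \|^2_{\mathbb{H}^{r+s}(\Omega)} + \| h \|^2_{\mathbb{H}^{r}(\Omega)} + \| f \|_{L^2(0,T;\mathbb{H}^{r}(\Omega))}^2
\]
for every $r \geq 0$, and then specialize to $r = \sigma$, $r=\nu$, and $r = 1+\nu$.

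To prove this key estimate I would expand $u$ in the eigenbasis, $u(x',t) = \sum_{k} u_k(t)\varphi_k(x')$, and use Parseval's identity together with \eqref{def:Hs} to write $\int_0^T \| u(\cdot,t)\|_{\mathbb{H}^{r+s}(\Omega)}^2\diff t = \sum_{k} \lambda_k^{r+s} \int_0^T |u_k(t)|^2\diff t$. The coefficients $u_k$ are given explicitly by \eqref{eq:u_k_explicit}, so it remains to bound $\int_0^T |u_k(t)|^2\diff t$ term by term. Using $(a+b+c)^2 \le 3(a^2+b^2+c^2)$ and $|\cos|,|\sin|\le 1$, the first two contributions satisfy $\int_0^T |g_k\cos(\lambda_k^{s/2}t)|^2\diff t \le T|g_k|^2$ and $\int_0^T |h_k\lambda_k^{-s/2}\sin(\lambda_k^{s/2}t)|^2\diff t \le T\lambda_k^{-s}|h_k|^2$. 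For the forcing term, which is the convolution $\lambda_k^{-s/2}(\phi_k\star f_k)(t)$ with $\phi_k(t)=\sin(\lambda_k^{s/2}t)$, I would invoke Lemma~\ref{le:continuity}: since $\|\phi_k\|_{L^1(0,T)}\le T$, it gives $\int_0^T |\lambda_k^{-s/2}(\phi_k\star f_k)(t)|^2\diff t \le T^2\lambda_k^{-s}\int_0^T|f_k(t)|^2\diff t$.

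Multiplying by $\lambda_k^{r+s}$ and summing over $k$ then produces exactly the three desired data terms: the $g$-contribution carries the full weight $\lambda_k^{r+s}$ and assembles to $\|g\|^2_{\mathbb{H}^{r+s}(\Omega)}$, whereas the $\lambda_k^{-s}$ factors in the $h$- and $f$-contributions lower the exponent to $\lambda_k^{r}$, giving $\|h\|^2_{\mathbb{H}^{r}(\Omega)}$ and, after an interchange of summation and the $t$-integral, $\|f\|^2_{L^2(0,T;\mathbb{H}^{r}(\Omega))}$. All hidden constants depend only on $T$ and $s$ and not on the data or on $\ell$, as required.

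I do not anticipate a genuine analytic obstacle here: the heavy lifting, namely the analyticity in $y$ encoded in the factorial and $\kappa^{2(\ell+1)}$ growth, is already contained in Proposition~\ref{pro:pointwisebounds}, and the present argument is essentially a time-integration combined with the elementary energy estimate for the scalar ODE \eqref{eq:uk}. The only points requiring care are bookkeeping ones: correctly tracking the $\lambda_k^{-s}$ shift so that the regularity indices on the right-hand side come out as $\mathbb{H}^{r}$ rather than $\mathbb{H}^{r+s}$ for the $h$ and $f$ data, and justifying the termwise manipulation of the series, which is legitimate given the regularity of $u$ guaranteed by Theorem~\ref{thm:wp_2}.
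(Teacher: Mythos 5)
Your proposal is correct and follows essentially the same route as the paper: the paper's proof likewise reduces everything to the modal bound $\|u_k\|^2_{L^2(0,T)} \lesssim T g_k^2 + T\lambda_k^{-s}h_k^2 + T^2\lambda_k^{-s}\|f_k\|^2_{L^2(0,T)}$ obtained from \eqref{eq:u_k_explicit} and Lemma~\ref{le:continuity}, and then invokes \eqref{eq:partial_y_l+1}--\eqref{eq:deltaxpartial_y_l+1}. You have merely written out in more detail the summation over $k$ and the specialization $r=\sigma,\nu,1+\nu$ that the paper leaves implicit.
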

\begin{proof}
In view of \eqref{eq:u_k_explicit} and the continuity estimate of Lemma \ref{le:continuity}, we conclude, for $k \in \mathbb{N}$, that
 \[
  \|u_k \|^2_{L^2(0,T)} \lesssim T g_k^2 + T \lambda_k^{-s} h_k^2 + T^2\lambda_k^{-s} \| f_k\|^2_{L^2(0,T)}.
 \]
 The desired estimates \eqref{eq:reg_in_y_gamma_1_gammaeq2}--\eqref{eq:reg_in_y_gamma_3_gammaeq2} thus follow directly from \eqref{eq:partial_y_l+1}--\eqref{eq:deltaxpartial_y_l+1}.
\end{proof}

\subsection{Time regularity}
\label{sub:time_regularity}

We begin this section by defining, for $\ell \in \{1,\dots,4 \}$,
\begin{equation}
 \label{eq:Sigma}
 \Sigma_{\ell}(f,g,h):= \| g \|_{\mathbb{H}^{(\ell+1)s}(\Omega)} + \| h \|_{\mathbb{H}^{\ell s}(\Omega)} + \|f \|_{L^2(0,T;\mathbb{H}^{\ell s}(\Omega))}.
\end{equation}
In addition, and to shorten notation, we define
\begin{equation}
 \label{eq:Sigmanew}
 \Xi(f,g,h):=  \Sigma_{4}(f,g,h) + \| \partial_t^2 f \|_{L^{2}(0,T;\Hs)}.
\end{equation}

We now derive regularity estimates in time for the solution $\ue$. These estimates will be needed in the analysis of the fully discrete schemes proposed in section \ref{sec:space_time_discretization}.

\begin{theorem}[time--regularity]
Let $\ue$ be the solution to problem \eqref{eq:wave_alpha_extension}--\eqref{eq:initial_cond} for $s\in(0,1)$. The following regularity estimates in time hold:
\begin{align}
\label{eq:partial_t_l_nabla_U}
  \| \partial_t \nabla \ue \|_{L^{\infty}(0,T;L^2(y^{\alpha},\C))} & \lesssim \Sigma_{1}(f,g,h),
  \\
  \label{eq:partial_t_l_nabla_U2}
    \| \partial_t^{2} \nabla \ue \|_{L^{\infty}(0,T;L^2(y^{\alpha},\C))} & \lesssim \Sigma_{2}(f,g,h),
\\
\label{eq:partial_t_l_nabla_U3}
  \| \partial_t^{3} \nabla \ue \|_{L^{\infty}(0,T;L^2(y^{\alpha},\C))} & \lesssim \Sigma_{3}(f,g,h) + \| \partial_t f \|_{L^2(0,T;\Hs)},
\\
\label{eq:partial_t_l_nabla_U4}
  \| \partial_t^{4} \nabla \ue \|_{L^{\infty}(0,T;L^2(y^{\alpha},\C))} & \lesssim \Xi(f,g,h).
\end{align}
In all these inequalities the hidden constants do not depend either on $\ue$ or the problem data.
\label{thm:time_regularity}
\end{theorem}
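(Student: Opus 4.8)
The plan is to exploit the separation-of-variables representation \eqref{eq:exactforms}--\eqref{eq:u_k_explicit}, which reduces all time-regularity questions to estimates on the scalar coefficients $u_k(t)$ and their time derivatives. First I would differentiate \eqref{eq:u_k_explicit} in time: since $u_k$ is built from $\cos(\lambda_k^{s/2}t)$, $\sin(\lambda_k^{s/2}t)$, and a Duhamel convolution against $f_k$, each time derivative brings down a factor of $\lambda_k^{s/2}$ on the homogeneous (initial-data) terms, while the forcing term is handled by differentiating the convolution and using $u_k'' = f_k - \lambda_k^s u_k$ to trade time derivatives for powers of $\lambda_k^s$ acting on $f_k$ plus boundary contributions at $t$. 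Concretely, for the $m$-th derivative one obtains a bound of the form $\|\partial_t^m u_k\|_{L^\infty(0,T)}^2 \lesssim \lambda_k^{ms}(g_k^2 + \lambda_k^{-s}h_k^2) + (\text{forcing terms})$, where the forcing terms, once the convolution is estimated via Lemma~\ref{le:continuity} (Young's inequality), contribute the $f_k$-dependent pieces measured in the appropriate $\mathbb{H}^{\ell s}$ norm.

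Next I would assemble the spatial norm. Writing $\nabla\ue = \sum_k u_k(t)\,\nabla(\varphi_k(x')\psi_k(y))$ and using orthogonality of the $\varphi_k$ together with the key identity \eqref{eq:int_a_b}, the weighted gradient norm collapses to a weighted sum over $k$:
\begin{equation*}
\|\partial_t^m \nabla \ue(\cdot,t)\|_{L^2(y^\alpha,\C)}^2 \sim \sum_{k} |\partial_t^m u_k(t)|^2 \, d_s\lambda_k^s,
\end{equation*}
where the factor $\lambda_k^s$ emerges from $\int_0^\infty y^\alpha(\lambda_k\psi_k^2 + (\psi_k')^2)\,\diff y$ evaluated using \eqref{eq:int_a_b}, \eqref{eq:exp_decay_psik}, and \eqref{eq:limit_psik}. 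Combining this with the scalar bounds above turns each inequality into a convergent $\lambda_k$-weighted sum: for \eqref{eq:partial_t_l_nabla_U} the weight is $\lambda_k^{(m)s}$ with $m=1$, giving exactly the norms $\|g\|_{\mathbb{H}^{2s}}$, $\|h\|_{\mathbb{H}^{s}}$, $\|f\|_{L^2(0,T;\mathbb{H}^{s})}$ packaged in $\Sigma_1$; the higher-order estimates follow the same template with $m=2,3,4$, yielding $\Sigma_2,\Sigma_3,\Sigma_4$ respectively, and the extra terms $\|\partial_t f\|_{L^2(0,T;\Hs)}$ and $\|\partial_t^2 f\|_{L^2(0,T;\Hs)}$ appearing in \eqref{eq:partial_t_l_nabla_U3} and \eqref{eq:partial_t_l_nabla_U4} arise precisely from the forcing contributions that cannot be absorbed into the $\mathbb{H}^{\ell s}$-norms via the $u_k''=f_k-\lambda_k^s u_k$ substitution.

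The main obstacle, and the step requiring the most care, is the bookkeeping of the forcing term under repeated differentiation. The third and fourth time derivatives of the Duhamel integral generate boundary terms of the form $\partial_t^{m-2}f_k(t)$ evaluated at the current time together with residual convolutions; deciding which pieces are controlled by $\|f\|_{L^2(0,T;\mathbb{H}^{\ell s})}$ (using $u_k''=f_k-\lambda_k^s u_k$ to lower the order in $t$ at the cost of one power of $\lambda_k^s$) versus which must be retained as genuine time derivatives $\|\partial_t^{j}f\|_{L^2(0,T;\Hs)}$ is exactly what dictates the precise right-hand sides $\Sigma_3 + \|\partial_t f\|$ and $\Xi$. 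I would handle this by integrating by parts in the convolution or equivalently by repeatedly applying the recursion $\partial_t^2 u_k = f_k - \lambda_k^s u_k$, tracking at each stage whether a surviving factor of $\lambda_k^s$ can be paired with a power of $f_k$ to land in a spectral norm $\mathbb{H}^{\ell s}$, and passing to the supremum over $t\in(0,T)$ only after the $L^\infty$-in-time scalar bounds are in hand. Finiteness of the weighted $y$-integrals for all these sums is guaranteed uniformly in $k$ by \eqref{eq:int_a_b}--\eqref{eq:limit_psik}, so no additional decay argument in the extended variable is needed beyond what is already recorded.
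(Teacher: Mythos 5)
Your proposal is correct and follows essentially the same route as the paper: both reduce the weighted gradient norm to the spectral identity $\normC{\partial_t^{\ell}\nabla\ue(\cdot,t)}^2=\sum_k(\partial_t^{\ell}u_k(t))^2\lambda_k^s=\|\partial_t^{\ell}u(\cdot,t)\|_{\Hs}^2$ via \eqref{eq:int_a_b} and \eqref{eq:limit_psik}, then differentiate the explicit representation \eqref{eq:u_k_explicit} and control the Duhamel terms with Lemma~\ref{le:continuity}. The paper carries this out explicitly only for the first two derivatives and asserts the rest follow similarly; your bookkeeping of the boundary terms $\partial_t^{j}f_k(t)$ that produce the extra $\|\partial_t f\|_{L^2(0,T;\Hs)}$ and $\|\partial_t^2 f\|_{L^2(0,T;\Hs)}$ contributions matches what that omitted computation requires.
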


\begin{proof}
Since $\{ \varphi_k \}_{k \in \mathbb{N}}$ is an orthonormal basis of $L^2(\Omega)$ and an orthogonal basis of $(H_0^1(\Omega),a_{\Omega}(\cdot,\cdot))$, the definition of the energy norm $\normC{\cdot}$, given in \eqref{eq:norm-C}, and the properties \eqref{eq:int_a_b} and \eqref{eq:limit_psik} allow us to conclude, for $\ell \in \mathbb{N}_0$, that
\begin{align*}
\normC{ \partial_t^{\ell} \nabla \ue (\cdot,t)} ^2 & = d_s^{-1} \int_{\C} y^{\alpha} \left[ \mathbf{A} \GRAD \partial_t^{\ell}\ue(\cdot,t) \cdot \GRAD \partial_t^{\ell}\ue(\cdot,t)  + c (\partial_t^{\ell}\ue(\cdot,t))^2\right]\diff x
\\
& = d_s^{-1}  \sum_{k \in \mathbb{N}} (\partial_t^{\ell} u_k(t))^2 \int_{0}^{\infty} \left[ \lambda_k \psi_k(y)^2 + \psi_k'(y)^2 \right] \diff y  =  \sum_{k \in \mathbb{N}} (\partial_t^{\ell} u_k(t))^2 \lambda_k^s.
\end{align*}
We have thus arrived at the estimate $\normC{ \partial_t^{\ell} \nabla \ue (\cdot,t)}^2 = \| \partial_t^{\ell} u(\cdot,t) \|^2_{\Hs}$.

We now invoke the explicit representation of the coefficient $u_k(t)$, with $k \in \mathbb{N}$, which is provided in \eqref{eq:u_k_explicit}, to obtain that 
\[
\partial_t u_k(t) = -g_k \lambda_k^{s/2} \sin \left(\lambda_k^{s/2} t\right) + h_k \cos \left(\lambda_k^{s/2} t\right) + \int_0^t f_k(r) \cos \left(\lambda_k^{s/2} (t-r)\right) \diff r.
\]
This, on the basis of the definition of the norm $ \| \cdot \|_{\mathbb{H}^r(\Omega)}$, given in \eqref{def:Hs}, and an application of Lemma \ref{le:continuity}, reveal that
\[
\normC{ \partial_t \nabla \ue (\cdot,t)}^2\lesssim
  \sum_{k \in \mathbb{N}} (\partial_t u_k(t))^2 \lambda_k^s \lesssim \| g\|^2_{\mathbb{H}^{2s}(\Omega)} + \| h \|^2_{\mathbb{H}^{s}(\Omega)} + \sum_{k \in \mathbb{N}} \lambda_k^s \| f_k\|^2_{L^2(0,T)},
\]
which implies the desired estimate \eqref{eq:partial_t_l_nabla_U}. 

To derive \eqref{eq:partial_t_l_nabla_U2} we invoke, again, the representation formula \eqref{eq:u_k_explicit} and write
\begin{multline}
\partial_t^2 u_k(t) = -g_k \lambda_k^{s} \cos \left(\lambda_k^{s/2} t\right) - h_k \lambda_k^{s/2} \sin \left(\lambda_k^{s/2} t\right)
\\ + f_k(t)  - \lambda_k^{s/2}  \int_0^t f_k(r) \sin \left(\lambda_k^{s/2} (t-r)\right) \diff r.
\label{eq:second_derivatives}
\end{multline}
We thus use the definition of the norm $ \| \cdot \|_{\mathbb{H}^r(\Omega)}$ 
to arrive at \eqref{eq:partial_t_l_nabla_U2}.

The estimates \eqref{eq:partial_t_l_nabla_U2}--\eqref{eq:partial_t_l_nabla_U4} follow similar arguments upon taking derivatives to the explicit representation of the coefficient $u_k(t)$, with $k \in \mathbb{N}$, provided in \eqref{eq:u_k_explicit}. This concludes the proof.
\end{proof}
\subsection{Space--time regularity}
\label{sub:space_time_regularity}

We present the following regularity result in space and time.

\begin{theorem}[space--time regularity]\label{thm:st_reg}
Let $\ue$ solve \eqref{eq:wave_alpha_extension}--\eqref{eq:initial_cond} for $s\in(0,1)$. Let $0 \leq \sigma < s$ and $0 \leq \nu < 1+s $. Then, there exists $\kappa > 1$ such that the following regularity estimates hold for all $\ell \in \mathbb{N}_0$:
\begin{multline}
\| \partial_t^2 \partial_{y}^{\ell +1 } \ue \|_{L^2(0,T;L^2(\omega_{\alpha + 2 \ell -2\sigma,\theta},\C))}^2  
\lesssim  \kappa^{2(\ell+1)}(\ell+1)!^2 \big(
\| g \|^2_{\mathbb{H}^{\sigma + 3s}(\Omega)} 
\\
+ 
\| h \|^2_{\mathbb{H}^{\sigma + 2s}(\Omega)} 
+ \| f \|_{L^2(0,T;\mathbb{H}^{\sigma+2s}(\Omega))}^2\big),
\label{eq:reg_in_y_gamma_1_gammaeq2_partial_t_2} 
\end{multline}
\begin{multline}
\| \partial_t^2\GRAD_{x'} \partial_{y}^{\ell +1 } \ue \|_{L^2(0,T;L^2(\omega_{\alpha + 2 (\ell+1) -2\nu,\theta},\C))}^2  
\lesssim  \kappa^{2(\ell+1)}(\ell+1)!^2 \big(
\| g \|^2_{\mathbb{H}^{\nu + 3s}(\Omega)} 
\\
+ 
\| h \|^2_{\mathbb{H}^{\nu + 2s}(\Omega)} 
+ \| f \|_{L^2(0,T;\mathbb{H}^{\nu+2s}(\Omega))}^2\big),
\label{eq:reg_in_y_gamma_2_gammaeq2_partial_t_2} 
\end{multline}
\begin{multline}
\| \partial_t^2\mathcal{L}_{x'} \partial_{y}^{\ell +1 } \ue \|_{L^2(0,T;L^2(\omega_{\alpha + 2 (\ell+1) -2\nu,\theta},\C))}^2  
\lesssim  \kappa^{2(\ell+1)}(\ell+1)!^2 \big(
\| g \|^2_{\mathbb{H}^{1+\nu + 3s}(\Omega)} 
\\
+ 
\| h \|^2_{\mathbb{H}^{1+\nu + 2s}(\Omega)} 
+ \| f \|_{L^2(0,T;\mathbb{H}^{1+\nu+2s}(\Omega))}^2\big).
\label{eq:reg_in_y_gamma_3_gammaeq2_partial_t_2} 
\end{multline}
The hidden constants do not depend either on $\ue$ or the problem data.
\label{thm:time_space_regularity}
\end{theorem}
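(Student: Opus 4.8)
The plan is to combine the pointwise-in-$y$ space regularity bounds of Proposition~\ref{thm:pointwisebounds} with the time-regularity machinery already developed in Theorem~\ref{thm:time_regularity}. The crucial structural observation is that the series representation \eqref{eq:exactforms} separates variables: $\ue(x,t) = \sum_k u_k(t)\varphi_k(x')\psi_k(y)$, so that differentiating in $t$ and differentiating in $(x',y)$ act on independent factors. Consequently the estimates \eqref{eq:partial_y_l+1}--\eqref{eq:deltaxpartial_y_l+1}, which were established for $\ue$ itself, hold verbatim for $\partial_t^2\ue$ provided one replaces the coefficients $u_k(t)$ by $\partial_t^2 u_k(t)$; indeed $\partial_t^2\ue = \sum_k (\partial_t^2 u_k(t))\varphi_k(x')\psi_k(y)$ has exactly the same separated form. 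Thus the first step is to state this observation precisely and record that, for each fixed $t$,
\begin{equation*}
\| \partial_t^2\partial_y^{\ell+1}\ue(\cdot,t)\|_{L^2(\omega_{\alpha+2\ell-2\sigma,\theta},\C)}^2 \lesssim \kappa^{2(\ell+1)}(\ell+1)!^2\, \| \partial_t^2 u(\cdot,t)\|_{\mathbb{H}^{\sigma+s}(\Omega)}^2,
\end{equation*}
together with the two analogous bounds in which $\partial_y^{\ell+1}$ is replaced by $\GRAD_{x'}\partial_y^{\ell+1}$ and $\mathcal{L}_{x'}\partial_y^{\ell+1}$, at the regularity levels $\nu+s$ and $1+\nu+s$ respectively. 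This is justified because the proof of Proposition~\ref{thm:pointwisebounds} only uses the spatial coefficients $\{(u(\cdot,t),\varphi_k)_{L^2(\Omega)}\}_k$ as abstract $\ell^2$-weighted data.

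The second step is to integrate these pointwise estimates over $(0,T)$ and control the resulting right-hand side $\| \partial_t^2 u\|_{L^2(0,T;\mathbb{H}^{r}(\Omega))}$ for the relevant values $r\in\{\sigma+s,\ \nu+s,\ 1+\nu+s\}$. Here I would reuse the explicit formula \eqref{eq:second_derivatives} for $\partial_t^2 u_k(t)$, together with the definition \eqref{def:Hs} of the $\mathbb{H}^r(\Omega)$ norm and the convolution continuity Lemma~\ref{le:continuity}. Squaring \eqref{eq:second_derivatives}, multiplying by $\lambda_k^{r}$, summing in $k$, and integrating in $t$ yields a bound of the form
\begin{equation*}
\| \partial_t^2 u\|_{L^2(0,T;\mathbb{H}^{r}(\Omega))}^2 \lesssim \| g\|_{\mathbb{H}^{r+2s}(\Omega)}^2 + \| h\|_{\mathbb{H}^{r+s}(\Omega)}^2 + \| f\|_{L^2(0,T;\mathbb{H}^{r}(\Omega))}^2,
\end{equation*}
where the two extra powers $\lambda_k^{s}$ in the $g$-term come from the factor $\lambda_k^{s}\cos(\lambda_k^{s/2}t)$, one extra power $\lambda_k^{s/2}$ in the $h$-term from $\lambda_k^{s/2}\sin(\lambda_k^{s/2}t)$, and the $f$-contribution splits into the pointwise term $f_k(t)$ (contributing $\|f\|_{L^2(0,T;\mathbb{H}^r)}$ directly) and the convolution term $\lambda_k^{s/2}\int_0^t f_k \sin(\cdots)$, whose $\lambda_k^{s/2}$ prefactor is absorbed against one half-power so that Lemma~\ref{le:continuity} again produces $\|f\|_{L^2(0,T;\mathbb{H}^{r}(\Omega))}$. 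Substituting $r=\sigma+s$, $r=\nu+s$, and $r=1+\nu+s$ recovers precisely the Sobolev indices $\sigma+3s,\ \sigma+2s$, etc.\ that appear on the right-hand sides of \eqref{eq:reg_in_y_gamma_1_gammaeq2_partial_t_2}--\eqref{eq:reg_in_y_gamma_3_gammaeq2_partial_t_2}.

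The final step is purely bookkeeping: combine the two displays above, pull the $\kappa$- and factorial-factors (which are independent of $t$) out of the time integral, and read off the three stated inequalities. I expect the main obstacle to be a careful accounting of the shift in Sobolev exponents: one must verify that the $f$-contribution really enters only at level $\mathbb{H}^{r}$ and not at the higher level $\mathbb{H}^{r+2s}$, since the worst term in \eqref{eq:second_derivatives} for the forcing is the undifferentiated $f_k(t)$ rather than a term carrying spectral weights. Everything else follows the template of Theorem~\ref{thm:time_regularity} and Proposition~\ref{pro:L2_time_bounds} essentially line by line, so no genuinely new analytic ingredient is required beyond the separation-of-variables reduction and the reuse of Lemma~\ref{le:continuity}.
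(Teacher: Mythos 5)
Your proposal follows the paper's proof essentially line by line: the paper likewise observes that the separated representation lets the pointwise bounds \eqref{eq:partial_y_l+1}--\eqref{eq:deltaxpartial_y_l+1} apply with $u$ replaced by $\partial_t^2 u$, then controls $\|\partial_t^2 u(\cdot,t)\|_{\mathbb{H}^{r}(\Omega)}$ via \eqref{eq:second_derivatives} and integrates in time. The one place where your argument goes wrong is the treatment of the convolution term $\lambda_k^{s/2}\int_0^t f_k(r)\sin(\lambda_k^{s/2}(t-r))\,\diff r$: you claim the prefactor $\lambda_k^{s/2}$ can be ``absorbed against one half-power'' so that Lemma~\ref{le:continuity} yields a contribution at level $\mathbb{H}^{r}(\Omega)$. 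This absorption has no mechanism behind it, since
\begin{equation*}
\bigl\|\sin(\lambda_k^{s/2}\,\cdot\,)\bigr\|_{L^1(0,T)} \approx \tfrac{2}{\pi}T \quad\text{as } k\to\infty,
\end{equation*}
i.e.\ the $L^1$ norm of the kernel does not decay in $k$, so the factor $\lambda_k^{s/2}$ survives and the convolution term contributes $\lambda_k^{r+s}\|f_k\|^2_{L^2(0,T)}$, hence $\|f\|^2_{L^2(0,T;\mathbb{H}^{r+s}(\Omega))}$ rather than $\|f\|^2_{L^2(0,T;\mathbb{H}^{r}(\Omega))}$. This is exactly what the paper records in \eqref{eq:utt_sigma_s}, and with $r=\sigma+s$ it is precisely why the theorem's right-hand side carries $\|f\|_{L^2(0,T;\mathbb{H}^{\sigma+2s}(\Omega))}$ and not the weaker norm you predict. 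The error is inconsequential for the final statement (the corrected computation lands exactly on the claimed indices, and your over-optimistic bound would in any case imply the weaker one), but as written that sub-step is false and should be replaced by the direct estimate above.
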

\begin{proof}
Similar arguments to the ones used to derive \eqref{eq:partial_y_l+1} reveal that
\[
  \| \partial_t^2 \partial_y^{\ell+1} \ue(\cdot,t) \|_{L^2(\omega_{\alpha+2\ell-2\sigma,\theta}, \C)}^2 \lesssim \kappa^{2(\ell+1)} (\ell+1)!^2 \| \partial_t^2 u(\cdot,t) \|_{\mathbb{H}^{\sigma+s}(\Omega)}^2.
\]
To control the right--hand side of the previous inequality we invoke formula \eqref{eq:second_derivatives} and the definition of the $\mathbb{H}^r(\Omega)$--norm, given in \eqref{def:Hs}. These arguments reveal the estimate
\begin{equation}\label{eq:utt_sigma_s}
  \begin{split}    
 \| \partial_t^2 u(\cdot,t) \|_{\mathbb{H}^{\sigma+s}(\Omega)}^2 \lesssim &\ \| g \|^2_{\mathbb{H}^{\sigma + 3s}(\Omega)} + \| h \|^2_{\mathbb{H}^{\sigma + 2s}(\Omega)} 
\\&+ \sum_{k \in \mathbb{N}} \lambda_k^{\sigma + s}  \left( f_k^2(t) +  \lambda_k^s \| f_k \|^2_{L^2(0,T)} \right).
  \end{split}
\end{equation}
This yields \eqref{eq:reg_in_y_gamma_1_gammaeq2_partial_t_2}. Similar arguments allow us to derive the regularity estimates \eqref{eq:reg_in_y_gamma_2_gammaeq2_partial_t_2} and \eqref{eq:reg_in_y_gamma_3_gammaeq2_partial_t_2}.
\end{proof}

As it will be used in the analysis that follows, we introduce
\begin{equation}
 \mathfrak{A}(f,g,h) =  \| g \|_{\mathbb{H}^{1+3s}(\Omega)} 
 + \| h \|_{\mathbb{H}^{1+2s}(\Omega)}
 + \| f \|_{L^2(0,T;\mathbb{H}^{1+2s}(\Omega))} ,
 \label{eq:frakA}
\end{equation}
and notice that, if $0 \leq \sigma < s$, then
\begin{multline}
\label{eq:estimate_frakA}
\| \partial_t^2\partial_{y}^{\ell +1 } \ue \|_{L^2(0,T;L^2(\omega_{\alpha + 2 \ell-2\sigma,\theta},\C))}^2 + \| \partial_t^2\GRAD_{x'} \partial_{y}^{\ell +1 } \ue \|_{L^2(0,T;L^2(\omega_{\alpha + 2 (\ell+1),\theta},\C))}^2 
\\
+ \|\partial_t^2 \mathcal{L}_{x'} \partial_{y}^{\ell +1 } \ue \|_{L^2(0,T;L^2(\omega_{\alpha + 2 (\ell+1),\theta},\C))}^2 \lesssim  \kappa^{2(\ell+1)}(\ell+1)!^2 \mathfrak{A}(f,g,h)^2.
\end{multline}

As a consequence of the arguments elaborated in the proof of the previous theorem (see \eqref{eq:utt_sigma_s}) and the previous definition, we can thus immediately arrive at the following regularity estimate.
\begin{corollary}[space--time regularity]
\label{cor:st_reg_u}
Let $u$ solve \eqref{eq:fractional_wave} for $s \in (0,1)$. Then
\begin{equation}
\| \partial_t^2 u \|_{\mathbb{H}^{1+s}(\Omega)} \lesssim \mathfrak{A}(f,g,h).
\end{equation}
The hidden constant does not depend either on $u$ or the problem data.
\end{corollary}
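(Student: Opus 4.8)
The plan is to derive the bound on $\| \partial_t^2 u(\cdot,t) \|_{\mathbb{H}^{1+s}(\Omega)}$ by specializing the spectral estimate already obtained in the proof of Theorem~\ref{thm:st_reg}. Recall that the key intermediate inequality \eqref{eq:utt_sigma_s} was stated in terms of the exponent $\sigma + s$ appearing in the $\mathbb{H}^{\sigma+s}(\Omega)$--norm of $\partial_t^2 u$, and was valid for $0 \le \sigma < s$. The present corollary concerns the norm $\mathbb{H}^{1+s}(\Omega)$, so the natural first step is to reproduce the derivation of \eqref{eq:utt_sigma_s} with the spectral weight $\lambda_k^{\sigma+s}$ replaced by $\lambda_k^{1+s}$ throughout.

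Concretely, I would start from the explicit representation \eqref{eq:second_derivatives} for $\partial_t^2 u_k(t)$, square it, and multiply by $\lambda_k^{1+s}$, using $(a+b+c+d)^2 \lesssim a^2+b^2+c^2+d^2$ to split the four terms. The first two terms produce, after summing over $k$ and recalling $|\cos|,|\sin|\le 1$ together with the definition \eqref{def:Hs} of the $\mathbb{H}^r(\Omega)$--norm, the contributions $\| g \|^2_{\mathbb{H}^{1+3s}(\Omega)}$ (from the $\lambda_k^s$ factor multiplying $g_k$) and $\| h \|^2_{\mathbb{H}^{1+2s}(\Omega)}$ (from the $\lambda_k^{s/2}$ factor multiplying $h_k$). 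For the forcing terms, the pointwise term $f_k(t)$ contributes $\sum_k \lambda_k^{1+s} f_k^2(t)$, while the convolution term, after applying the continuity estimate of Lemma~\ref{le:continuity} to absorb the $\int_0^t f_k(r)\sin(\cdots)\diff r$ factor, contributes $\sum_k \lambda_k^{1+2s} \| f_k \|^2_{L^2(0,T)}$. This reproduces the analogue of \eqref{eq:utt_sigma_s} with $\sigma+s$ replaced by $1+s$, i.e. with the $\mathbb{H}$--exponents shifted up by $1-\sigma$.

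To pass from this pointwise-in-time bound to the stated estimate, I would then recognize that the forcing contributions are exactly controlled by $\| f \|^2_{L^2(0,T;\mathbb{H}^{1+2s}(\Omega))}$: the term $\sum_k \lambda_k^{1+2s}\|f_k\|^2_{L^2(0,T)}$ is precisely this norm squared by \eqref{def:Hs}, and the pointwise term $\sum_k \lambda_k^{1+s} f_k^2(t)$ is dominated by $\sum_k \lambda_k^{1+2s} f_k^2(t)$ up to the $L^\infty$-versus-$L^2$ discrepancy in time, which is harmless once one observes that the right-hand side of the corollary is phrased through $\mathfrak{A}(f,g,h)$, whose forcing part is the $L^2(0,T;\mathbb{H}^{1+2s}(\Omega))$--norm. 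Matching the three groups of terms against the definition \eqref{eq:frakA} of $\mathfrak{A}(f,g,h)$ then yields the claim.

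The only genuinely delicate point is the bookkeeping of the spectral exponents: one must check that the powers of $\lambda_k$ accumulated from the two time-derivatives (each differentiation of the trigonometric factors in \eqref{eq:u_k_explicit} brings down a factor $\lambda_k^{s/2}$) combine with the base exponent $1+s$ to land on exactly $1+3s$ for $g$, $1+2s$ for $h$, and $1+2s$ for $f$, so that they align with \eqref{eq:frakA}. I expect no substantive analytic obstacle beyond this exponent accounting, since all the hard work — the spectral identity $\normC{\partial_t^\ell \nabla \ue(\cdot,t)}^2 = \| \partial_t^\ell u(\cdot,t)\|^2_{\mathbb{H}^s(\Omega)}$ and the Young-type convolution control of Lemma~\ref{le:continuity} — has already been carried out in Theorems~\ref{thm:time_regularity} and~\ref{thm:st_reg}. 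Indeed the corollary is essentially the $\ell$-independent, spatial part of estimate \eqref{eq:reg_in_y_gamma_1_gammaeq2_partial_t_2} read off at $\sigma \to s$, $\nu \to 1$, so the result follows immediately from the already-established inequality \eqref{eq:utt_sigma_s} and the definition of $\mathfrak{A}$.
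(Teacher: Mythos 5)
Your proposal is correct and follows essentially the same route as the paper: the paper's own proof simply invokes the spectral estimate \eqref{eq:utt_sigma_s} (itself derived from \eqref{eq:second_derivatives} and the norm definition \eqref{def:Hs}) with the weight $\lambda_k^{\sigma+s}$ formally replaced by $\lambda_k^{1+s}$, and then matches the three resulting groups of terms against the definition \eqref{eq:frakA} of $\mathfrak{A}(f,g,h)$, exactly as you do. The only cosmetic slip is your closing remark that this corresponds to ``$\sigma \to s$'' (to reach $\mathbb{H}^{1+s}(\Omega)$ one formally takes $\sigma = 1$), but your actual exponent bookkeeping uses $\lambda_k^{1+s}$ throughout and yields precisely $1+3s$, $1+2s$, and $1+2s$ for $g$, $h$, and $f$, so nothing substantive is affected.
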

\section{Truncation}
\label{sec:truncation}

A first step towards space--discretization is to truncate the semi--infinite cylinder $\C$. In the next result we show that the solution $\ue$ to problem \eqref{eq:wave_alpha_extension}--\eqref{eq:initial_cond} decays exponentially in the extended variable $y$ for a.e. $t \in (0,T)$. This suggests to truncate $\C$ to $\C_{\Y} = \Omega \times (0,\Y)$, with a suitable truncation parameter $\Y$, and seek solutions in this bounded domain.

\begin{proposition}[exponential decay]
Let $s \in (0,1)$ and $\ue$ be the solution to \eqref{eq:weak_wave}. Then, for every $\Y \geq 1$, we have that 
\begin{equation}
\| \nabla \ue \|_{L^2(0,T;L^2(y^{\alpha},\Omega \times (\Y,\infty)))}  
\lesssim  e^{-\sqrt{\lambda_1}  \Y /2 } \Lambda(f,g,h),
\label{eq:exp_decayment}
\end{equation}
where $\lambda_1$ denotes the first eigenvalue of $\calL$ and $\Lambda(f,g,h)$ is defined in \eqref{eq:Lambda}. 
\label{pro:exp_decayment}
\end{proposition}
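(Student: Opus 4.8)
The plan is to exploit the series representation \eqref{eq:exactforms} together with the orthogonality of the eigenfunctions and the decay properties of the functions $\psi_k$. First I would pass from the Euclidean gradient to the energy density: since $A$ is uniformly elliptic and $c \geq 0$, for a.e.\ $(x',y,t)$ one has $|\GRAD\ue|^2 = |\GRAD_{x'}\ue|^2 + |\partial_y\ue|^2 \lesssim \mathbf{A}\GRAD\ue\cdot\GRAD\ue + c\,\ue^2$, so it suffices to bound $\int_0^T\int_\Y^\infty\int_\Omega y^\alpha(\mathbf{A}\GRAD\ue\cdot\GRAD\ue + c\,\ue^2)\diff x'\diff y\diff t$. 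Inserting \eqref{eq:exactforms}, using that $a_{\Omega}(\varphi_k,\varphi_j) = \lambda_k\delta_{kj}$ (from \eqref{eq:eigenpairs}) and that $\{\varphi_k\}$ is $L^2(\Omega)$--orthonormal, the $x'$--integral collapses the double series into a single one, giving for a.e.\ $t$
\[
\int_\Y^\infty\int_\Omega y^\alpha(\mathbf{A}\GRAD\ue\cdot\GRAD\ue + c\,\ue^2)\diff x'\diff y = \sum_{k\in\mathbb{N}} u_k(t)^2\int_\Y^\infty y^\alpha\big(\lambda_k\psi_k(y)^2 + \psi_k'(y)^2\big)\diff y.
\]

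Next I would evaluate the inner integral in closed form. Applying \eqref{eq:int_a_b} on $(\Y,b)$ and using \eqref{eq:exp_decay_psik}, the boundary contribution at $b\to\infty$ vanishes, so that (recalling $\psi_k>0$, $\psi_k'<0$)
\[
\int_\Y^\infty y^\alpha\big(\lambda_k\psi_k(y)^2 + \psi_k'(y)^2\big)\diff y = -\,\Y^\alpha\psi_k(\Y)\psi_k'(\Y) = \big|\Y^\alpha\psi_k(\Y)\psi_k'(\Y)\big|.
\]
Since $\Y\geq 1$, a second application of \eqref{eq:exp_decay_psik} bounds this by $\lesssim \lambda_k^s e^{-\sqrt{\lambda_k}\,\Y}$. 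Collecting these estimates and integrating over $t\in(0,T)$ yields
\[
\|\GRAD\ue\|^2_{L^2(0,T;L^2(y^\alpha,\Omega\times(\Y,\infty)))} \lesssim \sum_{k\in\mathbb{N}} \lambda_k^s\, e^{-\sqrt{\lambda_k}\,\Y}\,\|u_k\|_{L^2(0,T)}^2.
\]

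Finally, I would insert the bound $\|u_k\|_{L^2(0,T)}^2 \lesssim T g_k^2 + T\lambda_k^{-s}h_k^2 + T^2\lambda_k^{-s}\|f_k\|_{L^2(0,T)}^2$ already established in the proof of Proposition~\ref{pro:L2_time_bounds}; the factor $\lambda_k^s$ is then absorbed exactly, leaving $\lambda_k^s e^{-\sqrt{\lambda_k}\Y}\|u_k\|_{L^2(0,T)}^2 \lesssim e^{-\sqrt{\lambda_k}\Y}\big(T\lambda_k^s g_k^2 + T h_k^2 + T^2\|f_k\|_{L^2(0,T)}^2\big)$. Because $\lambda_k\geq\lambda_1$ for every $k$, I can replace $e^{-\sqrt{\lambda_k}\Y}$ by $e^{-\sqrt{\lambda_1}\Y}$ and pull it out of the sum; recognizing $\sum_k\lambda_k^s g_k^2 = \|g\|_{\Hs}^2$, $\sum_k h_k^2 = \|h\|_{L^2(\Omega)}^2$ and $\sum_k\|f_k\|_{L^2(0,T)}^2 = \|f\|_{L^2(0,T;L^2(\Omega))}^2$ reproduces the square of the asserted estimate, with a constant depending on $T$ but not on the data. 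Taking square roots and using $\sqrt{e^{-\sqrt{\lambda_1}\Y}}=e^{-\sqrt{\lambda_1}\Y/2}$ finishes the proof. The only points requiring genuine care are the vanishing of the boundary term at infinity in \eqref{eq:int_a_b} and the precise bookkeeping of the powers of $\lambda_k$, which must cancel so that exactly the norms constituting $\Lambda(f,g,h)$ appear; everything else is routine.
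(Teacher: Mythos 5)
Your proof is correct and follows essentially the same route as the paper: expand $\ue$ via \eqref{eq:exactforms}, use eigenfunction orthogonality to reduce to a single sum, evaluate the tail integral with \eqref{eq:int_a_b}, and bound the boundary term with \eqref{eq:exp_decay_psik}. The only (immaterial) difference is in the last step, where you insert the explicit bound on $\|u_k\|_{L^2(0,T)}$ from the proof of Proposition~\ref{pro:L2_time_bounds} instead of extracting $e^{-\sqrt{\lambda_1}\Y}$ and appealing to the energy estimate \eqref{eq:energy_estimate_2} as the paper does.
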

\begin{proof}
We invoke \eqref{eq:exactforms} and the fact that $\{ \varphi_k \}_{k \in \mathbb{N}}$ is an orthonormal basis of $L^2(\Omega)$ and an orthogonal basis of $(H_0^1(\Omega),a_{\Omega}(\cdot,\cdot))$ to conclude that
\[
 \int_0^T \int_{\C \setminus \C_{\Y}} y^{\alpha} \left( \mathbf{A} \GRAD \ue \cdot \GRAD \ue  + c \ue^2\right)\diff x \diff t = \int_0^T \sum_{k \in \mathbb{N}} u_k^2(t) \int_{\Y}^{\infty} \left( \lambda_k \psi_k^2 + \psi_k'(y)^2\right) \diff y  \diff t.
\]
We now apply formulas \eqref{eq:int_a_b} and \eqref{eq:exp_decay_psik} to obtain that
\begin{multline*}
  \int_0^T \int_{\C \setminus \C_{\Y}} y^{\alpha} \left( \mathbf{A} \GRAD \ue \cdot \GRAD \ue  + c \ue^2\right)\diff x \diff t  = \sum_{k \in \mathbb{N}}  | \Y^{\alpha} \psi_k(\Y) \psi_k'(\Y) | \int_0^T  u_k^2(t) \diff t 
  \\
   \lesssim \sum_{k \in \mathbb{N}}  e^{-\sqrt{\lambda_k}  \Y } \lambda_k^s \| u_k\|_{L^2(0,T)}^2 \lesssim e^{-\sqrt{\lambda_1}  \Y }\| u \|^2_{L^2(0,T;\Hs)}.
\end{multline*}
The desired estimate \eqref{eq:exp_decayment} is thus a consequence of the energy estimate \eqref{eq:energy_estimate_2} for the solution $u$ to problem \eqref{eq:fractional_wave}.
\end{proof}

To describe the truncated version of \eqref{eq:weak_wave}, we define the weighted Sobolev space 
\begin{equation*}
  \HL(y^{\alpha},\C_\Y)  = \left\{ w \in H^1(y^{\alpha},\C_\Y): w = 0 \text{ on }
    \partial_L \C_\Y \cup \Omega_{\Y} \right\},\\
\end{equation*}
and the bilinear form $a_\Y: \HL(y^{\alpha},\C_{\Y}) \times \HL(y^{\alpha},\C_{\Y})$ as
\begin{equation}
\label{eq:a_Y}
  a_\Y(w,\phi) = \frac{1}{d_s} \int_{\C_\Y} y^{\alpha} \left( \mathbf{A}(x) \GRAD w \cdot \GRAD \phi +  c(x') w \phi \right) \diff x,
\end{equation}
where $\C_{\Y} = \Omega \times (0,\Y)$ and $\Omega_{\Y} = \Omega \times \{ \Y \}$.

On the basis of the results of Proposition \ref{pro:exp_decayment}, we thus consider the following truncated problem: Find $\Ucal \in L^{\infty}(0,T; \HL(y^{\alpha},\C_{\Y}))$ with $\tr \partial_t \Ucal \in L^{\infty}(0,T;L^2(\Omega))$ and $\tr \partial_{t}^{2} \Ucal \in L^2(0,T;\Hsd)$ such that $\tr \Ucal(0) = g$, $\tr \partial_t \Ucal(0) = h$ and, for a.e. $t \in (0,T)$,
\begin{equation}
\label{eq:truncated_weak_wave}
\langle \tr \partial_t^{2} \Ucal, \tr \phi \rangle + 
a_\Y(\Ucal,\phi) = \langle f, \tr \phi \rangle \qquad \forall \phi \in \HL(y^{\alpha},\C_{\Y}).
\end{equation}

We define $\calH_{\alpha} :\Hs \rightarrow \HL(y^{\alpha},\C_{\Y})$, the truncated $\alpha$-harmonic extension operator, as follows: if $w \in \Hs$, then $\calW = \calH_\alpha w$ solves
\begin{equation}
\label{eq:calH}
\DIV(y^{\alpha} \mathbf{A} \nabla \mathcal{W}) +y^{\alpha}c\mathcal{W}  = 0 \text{ in } \C_\Y, 
\quad
\mathcal{W} = 0 \text{ on } \partial_L \C_\Y \cup \Omega_{\Y}, 
\quad
\mathcal{W} = w\text{ on } \Omega \times \{0\}.
\end{equation}

\begin{remark}[initial data]
\label{rem:initial_data_truncated}
\rm
As in Remark \ref{rem:initial_data}, we define $\Ucal(0) = \mathcal{H}_{\alpha}g$ and $\Ucal_t(0) = \mathcal{H}_{\alpha}h$, where $\mathcal{H}_{\alpha}$ is defined by \eqref{eq:calH}. References \cite{CT:10,CDDS:11} provide the estimates 
$\|\Ucal(0) \|_{L^2(y^{\alpha},\C)} \lesssim \| g \|_{\Hs}$ and $\|\partial_t \Ucal(0) \|_{L^2(y^{\alpha},\C)} \lesssim \| h \|_{\Hs}$.
%
\end{remark}

The following result shows that by considering \eqref{eq:truncated_weak_wave} instead of \eqref{eq:weak_wave} we only incur an exponentially small error

\begin{lemma}[exponential error estimate]
Let $\ue$ and $\Ucal$ be the solutions of problems \eqref{eq:weak_wave} and \eqref{eq:truncated_weak_wave}, respectively.
Then, for every $s \in (0,1)$ and $\Y \geq 1$, we have 
\begin{multline}
\label{eq:exp_convergence}
\| \tr \partial_t (\ue-\Ucal)\|^2_{L^{\infty}(0,T;L^2(\Omega))}  +  \| \GRAD(\ue-\Ucal)\|^2_{L^{\infty}(0,T;L^2(y^\alpha, \C_{\Y}) )} 
\\
\lesssim e^{-\sqrt{\lambda_1} \Y/2} \Sigma_1(f,g,h)
\end{multline}
where $\Sigma_1$ is defined by \eqref{eq:Sigma} and the hidden constant does not depend on either $\ue$, $\Ucal$, or the problem data.
\label{lemma:exp_estimate}
\end{lemma}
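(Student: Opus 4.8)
The plan is to reduce \eqref{eq:exp_convergence} to an energy estimate on the truncated cylinder, after repairing the fact that $\ue|_{\C_{\Y}}$ does not belong to $\HL(y^{\alpha},\C_{\Y})$ because it fails to vanish on $\Omega_{\Y}$. First I would extend $\Ucal$ by zero to $\C$; since $\Ucal$ vanishes on $\Omega_{\Y}$, this extension lies in $\HL(y^{\alpha},\C)$. Testing \eqref{eq:weak_wave} and \eqref{eq:truncated_weak_wave} with the same $\phi\in\HL(y^{\alpha},\C_{\Y})$ (extended by zero), and using that such $\phi$ is supported in $\C_{\Y}$ so that $a(\ue,\phi)=a_{\Y}(\ue,\phi)$, subtraction yields the Galerkin-type error identity
\[
\langle \tr \partial_t^2(\ue-\Ucal),\tr\phi\rangle + a_{\Y}(\ue-\Ucal,\phi)=0 \qquad \forall\, \phi\in\HL(y^{\alpha},\C_{\Y}).
\]

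To use a time-derivative test function I would introduce a smooth cutoff $\eta=\eta(y)$ with $\eta\equiv 1$ on $[0,\Y/2]$, $\eta\equiv 0$ on $[\Y,\infty)$ and $|\eta'|\lesssim \Y^{-1}$, and set $\tilde u:=\eta\,\ue$. Since $\eta(0)=1$ we have $\tr\tilde u=\tr\ue$, and because $\eta(\Y)=0$, $\tilde u(t)\in\HL(y^{\alpha},\C_{\Y})$ for a.e.\ $t$. Splitting $\ue-\Ucal=w+r$ with $w:=\tilde u-\Ucal\in\HL(y^{\alpha},\C_{\Y})$ and $r:=(1-\eta)\ue$, the crucial point is that $\tr r\equiv 0$, so $\tr(\ue-\Ucal)=\tr w$. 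Choosing $\phi=\partial_t w$ in the error identity and using $\tr r\equiv 0$ to annihilate the dynamic boundary (mass) contribution of $r$, the identity collapses, with $\mathcal{G}(t):=\tfrac12\big(\|\partial_t \tr w(t)\|_{L^2(\Omega)}^2+a_{\Y}(w(t),w(t))\big)$, to
\[
\mathcal{G}'(t)=-\,a_{\Y}(r,\partial_t w).
\]
Integrating in time and integrating the right-hand side by parts in time, so as to trade the uncontrolled $\partial_t w$ for $\partial_t r=(1-\eta)\partial_t\ue$, gives
\[
\mathcal{G}(t)=\mathcal{G}(0)-a_{\Y}(r,w)\Big|_0^{t}+\int_0^{t}a_{\Y}(\partial_t r,w)\,\diff r',
\]
where $\mathcal{G}(0)=\tfrac12 a_{\Y}(w(0),w(0))$ since the initial conditions force $\tr w(0)=0$ and $\partial_t\tr w(0)=0$.

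The remaining contributions all pair an $a_{\Y}$-norm of $r$ or $\partial_t r$ (a weighted gradient norm of $\ue$ or $\partial_t\ue$ over the tail $\Omega\times(\Y/2,\Y)$) against $a_{\Y}(w,w)^{1/2}\le(2\mathcal{G})^{1/2}$. The decay I need is \emph{pointwise} in $t$: repeating the modal computation of Proposition \ref{pro:exp_decayment} via \eqref{eq:exactforms}, \eqref{eq:int_a_b} and \eqref{eq:exp_decay_psik}, but keeping $t$ fixed, yields for each $t$
\[
\|\GRAD\ue(t)\|_{L^2(y^{\alpha},\Omega\times(\Y/2,\infty))}\lesssim e^{-\sqrt{\lambda_1}\Y/4}\|u(t)\|_{\Hs},\quad \|\GRAD\partial_t\ue(t)\|_{L^2(y^{\alpha},\Omega\times(\Y/2,\infty))}\lesssim e^{-\sqrt{\lambda_1}\Y/4}\|\partial_t u(t)\|_{\Hs},
\]
the $\Y^{-1}$ factors from $\eta'$ producing analogous lower-order $L^2$ tail terms. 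Taking the supremum in $t$ and invoking \eqref{eq:energy_estimate_2} ($\|u\|_{L^\infty(0,T;\Hs)}\lesssim\Sigma_1$) together with the time-regularity bound \eqref{eq:partial_t_l_nabla_U} and the identity $a(\partial_t\ue,\partial_t\ue)\sim\|\partial_t u\|_{\Hs}^2$ from the proof of Theorem \ref{thm:time_regularity} (giving $\|\partial_t u\|_{L^\infty(0,T;\Hs)}\lesssim\Sigma_1$), I obtain $\sup_t\big(\|r(t)\|_{a_{\Y}}+\|\partial_t r(t)\|_{a_{\Y}}\big)\lesssim e^{-\sqrt{\lambda_1}\Y/4}\Sigma_1$. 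For $\mathcal{G}(0)$ I would observe that $w(0)=\eta\,\mathcal{E}_{\alpha}g-\mathcal{H}_{\alpha}g$ and that $\mathcal{H}_{\alpha}g$ is the $a_{\Y}$-orthogonal projection of $\mathcal{E}_{\alpha}g$ onto $\HL(y^{\alpha},\C_{\Y})$ (Galerkin orthogonality between \eqref{eq:mathcalW} and \eqref{eq:calH}); best approximation plus the exponential decay of $\mathcal{E}_{\alpha}g$ then give $a_{\Y}(w(0),w(0))\lesssim e^{-\sqrt{\lambda_1}\Y/2}\|g\|_{\Hs}^2$. Finally, Young's inequality absorbs the endpoint term $a_{\Y}(r(t),w(t))$ into the left side, and the integral $\int_0^t\|\partial_t r\|_{a_{\Y}}\,a_{\Y}(w,w)^{1/2}\,\diff r'$ is closed by a Gronwall/Bihari argument, yielding $\sup_{[0,T]}\mathcal{G}\lesssim e^{-\sqrt{\lambda_1}\Y/2}\Sigma_1^2$; combined with the already-controlled $\sup_t\|\GRAD r\|_{L^2(y^\alpha,\C_\Y)}$ this proves the (natural quadratic form of the) bound \eqref{eq:exp_convergence}.

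The principal obstacle is the nonconformity of the two problems: $\ue$ and $\Ucal$ are posed on different domains and $\ue$ does not vanish on $\Omega_{\Y}$, so no single test function is admissible for both formulations. Engineering the cutoff splitting $\ue-\Ucal=w+r$ with $\tr r\equiv 0$ — which is exactly what makes the dynamic boundary term cancel and keeps the scheme energy-conservative — and upgrading Proposition \ref{pro:exp_decayment} from its $L^2(0,T)$ form to a pointwise-in-time tail decay controlled by $\Sigma_1$, are the crux of the argument. A secondary, purely technical point is that $\tr\partial_t^2\ue$ only lies in $L^2(0,T;\Hsd)$, so the formal testing with $\partial_t w$ and the ensuing energy identity must be legitimized by the standard Galerkin/density regularization underlying the well-posedness theory behind Theorem \ref{thm:energy_wave_2}.
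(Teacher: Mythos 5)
Your proposal follows essentially the same route as the paper: the same cutoff in $y$ (the paper's piecewise linear $\rho$, equal to $1$ on $[0,\Y/2]$ and $0$ at $\Y$), the same test function $\partial_t(\Ucal-\rho\ue)$ exploiting $\tr((1-\rho)\ue)=0$ to kill the dynamic boundary term, the pointwise-in-time exponential tail decay of $\GRAD\ue$ and $\GRAD\partial_t\ue$ obtained from \eqref{eq:int_a_b}--\eqref{eq:exp_decay_psik}, the time-regularity bound \eqref{eq:partial_t_l_nabla_U}, and the comparison of $\mathcal{H}_{\alpha}g$ with $\mathcal{E}_{\alpha}g$ for the initial term. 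The only deviation is bookkeeping: the paper keeps the energy in the full error $\Ucal-\ue$ so that the source term carries $\partial_t((\rho-1)\ue)$ and is absorbed directly by Young's inequality, whereas your energy of $w=\rho\ue-\Ucal$ forces an extra integration by parts in time and a Bihari/Gronwall step; both close the argument.
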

\begin{proof}
We begin the proof by defining the cutoff function $\rho \in W^{1,\infty}(0,\infty)$ as
\[
  \rho(y) = 1 \quad 0 \leq y \leq \frac\Y2, \qquad
  \rho(y) = \frac2\Y \left( \Y - y \right) \quad  \frac\Y2 < y < \Y, \qquad
  \rho(y) =0 \quad \Y \geq y.
\]

Notice that by a trivial zero extension we realize that $\Ucal$ belongs to $\HL(y^{\alpha},\C)$. We are thus allow to set $\phi= \partial_t(\Ucal -\rho \ue)$ in problems \eqref{eq:weak_wave} and \eqref{eq:truncated_weak_wave}. With these choices of test functions, we subtract the ensuing equalities and obtain that 
\[
\langle \tr \partial_t^{2} (\Ucal-\ue), \tr \partial_t (\Ucal-\rho \ue) \rangle + a_\Y(\Ucal-\ue, \partial_t(\Ucal-\rho \ue)) = 0.
\]
This expression yields
\[
 \frac{1}{2} \partial_t \| \tr \partial_t (\Ucal-\ue) \|_{L^2(\Omega)}^2 + \frac{1}{2} \partial_t a_{\Y}(\Ucal-\ue,\Ucal-\ue) =  a_{\Y}(\Ucal-\ue,\partial_t(\rho \ue - \ue))
\]
We thus integrate over time and use that $\tr \partial_t (\Ucal-\ue)|_{t=0} = 0$ to arrive at
\begin{multline}
 \| \tr \partial_t (\Ucal-\ue)(t) \|_{L^2(\Omega)}^2 +   \| \GRAD( \Ucal-\ue )(t) \|_{L^2(y^{\alpha},\C_{\Y})}^2
 \\
 \lesssim \| \GRAD(\Ucal(0) - \ue(0)) \|^2_{L^2(y^{\alpha},\C_{\Y})} + \int_0^t |a_{\Y}(\Ucal-\ue,\partial_{\zeta}(\rho \ue - \ue))|\diff \zeta =: \mathrm{I} + \mathrm{II}.
 \label{eq:first_truncation_estimate}
\end{multline}

It thus remains to bound the right--hand side of \eqref{eq:first_truncation_estimate}. First, in view of the fact that $\ue(0) = \calE_{\alpha} g$ and $\Ucal(0) = \calH_{\alpha} g$, with $\calE_{\alpha}$ and $\calH_{\alpha}$ being defined as in \eqref{eq:calE} and \eqref{eq:calH}, respectively, the results of \cite[Lemma 3.3]{NOS} allow us to conclude that
\begin{equation}
 \mathrm{I} = \| \GRAD( \mathcal{H}_{\alpha} - \mathcal{E}_{\alpha})g \|_{L^2(y^{\alpha},\C_{\Y})} \lesssim e^{-\sqrt{\lambda}_1 \Y/4} \| g \|_{\Hs}.
  \label{eq:second_truncation_estimate}
\end{equation}
To bound the term $\mathrm{II}$, we notice that if $y \leq \Y/2$, $(\rho-1)\ue \equiv 0$. If $y > \Y/2 $, then
\[
|\GRAD (\rho -1 )\partial_t \ue|^2 \leq 2 \left( \frac{4}{\Y^2} |\partial_t \ue|^2 + |\partial_t \GRAD \ue|^2\right).
\]
Consequently,
\[
  \| \GRAD (\rho-1) \partial_t \ue \|_{L^2(y^\alpha,\C_\Y)}^2 
  \lesssim \frac{1}{\Y^2}\int_{\frac\Y2}^\Y \int_{\Omega} y^{\alpha}|\partial_t \ue|^2\diff x' \diff y +
  \int_{\frac\Y2}^\Y \int_{\Omega}y^{\alpha} |\partial_t \nabla \ue|^2\diff x' \diff y. 
\]
A weighted Poincar\'e inequality, an application of \eqref{eq:int_a_b} and \eqref{eq:exp_decay_psik}, as in the proof of Proposition \ref{pro:exp_decayment}, and the use of the estimate \eqref{eq:exp_decayment} allow us to conclude that
\begin{align*}
\| \GRAD(\rho-1)\partial_t\ue \|^2_{L^2(y^{\alpha},\C_{\Y})} 
& \lesssim 
\| \partial_t \GRAD \ue \|^2_{L^2(y^{\alpha},\Omega \times (\Y/2,\Y))}
\leq \| \partial_t \GRAD \ue \|^2_{L^2(y^{\alpha},\Omega \times (\Y/2,\infty))}
\\
& \lesssim e^{-\sqrt{\lambda_1} \Y /2} \| \partial_t \GRAD \ue \|^2_{L^2(y^{\alpha},\C)}.
\end{align*}
The regularity estimate \eqref{eq:partial_t_l_nabla_U} thus implies that
\[
 \textrm{II} \leq C e^{-\sqrt{\lambda}_1 \Y/2}  \Sigma^2_{1}(f,g,h) + \tfrac{1}{2}\| \GRAD( \Ucal-\ue) \|^2_{L^{\infty}(0,T;L^2(y^{\alpha},\C_{\Y}))},
\]
where $C$ denotes a positive constant. Replacing the previous estimate for $\textrm{II}$ and the one in \eqref{eq:second_truncation_estimate} for $\textrm{I}$ into \eqref{eq:first_truncation_estimate} we obtain the desired exponential error estimate \eqref{eq:exp_convergence}.
\end{proof}

\section{Space and time discretization}
\label{sec:space_time_discretization}

In this section we present two fully discrete schemes for approximating the solution to problem \eqref{eq:fractional_wave}. In view of the localization results of Theorem \ref{thm:CS_NOT_ST} and the exponential error estimate \eqref{eq:exp_convergence} we shall thus discretize the truncated problem \eqref{eq:truncated_weak_wave}. We begin by setting notation on finite element spaces and introducing a finite element approximation in $\Omega$.

\subsection{Finite element methods}
\label{subsec:fems}
We follow \cite{BMNOSS:17} and present a scheme based on the tensorization of a first--degree FEM in $\Omega$ with a suitable $hp$--FEM in the extended variable. The scheme achieves log--linear complexity with respect to the number of degrees of freedom in $\Omega$.
To describe it, on the interval $[0,\Y]$, we consider geometric meshes $\calG^M_{\sigma} = \{ I_m \, | \,m =1,\dots M \}$ with $M$ elements and grading factor $\sigma \in (0,1)$: 
\begin{equation}
\label{eq:graded_meshes}
 I_1 = [0,\Y \sigma^{M-1}], \qquad I_i = [\Y \sigma^{M-m+1},\Y \sigma^{M-m}], \quad m \in \{ 2,\ldots,M \}.
\end{equation}
Notice that the meshes $\calG^M_{\sigma}$ are refined towards $y=0$ in order to capture the singular behavior exhibited by the solution $\ue$ on the extended variable $y$ as described in Propositions \ref{pro:pointwisebounds} and \ref{pro:L2_time_bounds}. On the aforementioned meshes, we consider a \emph{linear degree vector} $\bmr =(r_1,\dots,r_M) \in \mathbb{N}^M$ with slope $\slope$: 
$r_m := \max\{1,\lceil \slope m \rceil \}$, where $m=1,2,...,M$. We define the following finite element space:
\[
S^\bmr((0,\Y),\calG_{\sigma}^M) 
= 
\left \{ v_M \in C[0,\Y]: v_M|_{I_m} \in \mathbb{P}_{r_m}(I_m), I_m \in \calG_{\sigma}^M, m =1, \dots, M \right \},
\]
and the subspace of $S^\bmr((0,\Y),\calG^M)$ containing functions that vanish at $y=\Y$:
\[
S_{\{\Y\}}^\bmr((0,\Y),\calG_{\sigma}^M) = \left \{ v_M \in S^\bmr((0,\Y),\calG_{\sigma}^M): v_M(\Y) = 0 \right \}.
\]

Let $\T = \{ K\}$ be a conforming partition of $\bar \Omega$ into simplices $K$. We denote by $\Tr$ a collection of conforming and shape regular meshes that are refinements of an original mesh $\T_0$. For $\T \in \Tr$, we define $h_{\T} = \max \{\diam(K) : K \in \T \}$ and $N = \# \T$, the number of degrees of freedom of $\T$. We introduce 
%
the finite element space:
\[
S^1_0(\Omega,\T)
= 
\left \{ v_h \in C(\bar \Omega): v_h|_{K} \in \mathbb{P}_{1}(K) \quad 
\forall K \in \T, \ v_h|_{\partial \Omega} = 0 \right \}.
\]

With the meshes $\calG_{\sigma}^M$ and $\T$ at hand, we define $\T_{\Y} = \T \otimes \calG_{\sigma}^M$ and the finite--dimensional \emph{tensor product space}
\begin{equation}\label{eq:TPFE}
\V^{1,\bmr}_{N,M}(\T_{\Y}) := S^1_0(\Omega,\T) \otimes S_{\{ \Y\}}^\bmr((0,\Y),\calG_{\sigma}^M)
\subset \HL(y^{\alpha},\C_{\Y}).
\end{equation}
We write $\V(\T_{\Y})$ if the arguments are clear from the context.

Finally, we recall the standard $L^2(\Omega)$--orthogonal projection operator $\Pi_{x'}$:
\begin{equation}
 \label{eq:ortogonal_projection}
 \Pi_{x'} : L^2(\Omega) \rightarrow S^1_0(\Omega,\T), \quad (\Pi_{x'}w,W)_{L^2(\Omega)} = (w,W)_{L^2(\Omega)} \quad \forall W \in S^1_0(\Omega,\T).
\end{equation}
If $\T$ is quasi--uniform, then \cite[Lemma 1.131]{Guermond-Ern} 
\begin{equation}
\label{eq:ortogonal_projection_bounded}
\| \Pi_{x'}w \|_{L^2(\Omega)} \leq  \| w \|_{L^2(\Omega)},
\quad 
\| \nabla \Pi_{x'}w \|_{L^2(\Omega)} \lesssim  \| \nabla w \|_{L^2(\Omega)} 
\quad \forall w \in H^1(\Omega).
\end{equation}
If, in addition, $w \in H^2(\Omega)$, then \cite[Proposition 1.134]{Guermond-Ern} 
\begin{equation}
 \label{eq:ortogonal_projection_approximation}
\|w-\Pi_{x'}w \|_{L^2(\Omega)} +  h_{\T} \| \nabla (w-\Pi_{x'}w) \|_{L^2(\Omega)}  \lesssim  h_{\T}^2 | v |_{H^2(\Omega)}.
\end{equation}

\subsection{Weighted elliptic projector}
We define the \emph{weighted elliptic projector}
\[
  G_{\T_{\Y}}: \HL(y^{\alpha},\C_\Y) \rightarrow \V(\T_\Y)
\]
such that, for $w \in \HL(y^\alpha,\C_\Y)$, it is given by
\begin{equation}
 \label{eq:elliptic_projection}
a_\Y \left( G_{\T_{\Y}}w , W \right)  = a_\Y(w, W) \quad \forall W \in \V(\T_{\Y}).
\end{equation}
This operator is stable in $\HL(y^{\alpha},\C_\Y)$ \cite[Proposition 26]{NOS3}:
\begin{equation}
 \label{eq:G_stable}
\| \nabla G_{\T_{\Y}} w \|_{L^2(y^{\alpha},\C_\Y)} \lesssim \| \nabla w\|_{L^2(y^{\alpha},\C_\Y)} \qquad
\forall w \in \HL(y^{\alpha},\C_\Y).
\end{equation}

In what follows we present approximation properties for $G_{\T_{\Y}}$.

\begin{lemma}[error estimates for $G_{\T_{\Y}}$]\label{lemma:GT_est}
Let $\calG_{\sigma}^M$ be the geometric mesh defined in \eqref{eq:graded_meshes} where $\Y \sim | \log h_{\T}|$ with a sufficiently large constant. Let $w \in \mathbb{H}^s(\Omega)$. If $\calW$ denotes the truncated $\alpha$--harmonic extension of $w$, then there exists a minimal slope $\slope_{min}$ such that for linear degree vectors $\bmr$ with slope $\slope \ge \slope_{min}$ there holds 
\begin{equation}
 \label{eq:G_approx_1}
 \| \nabla (\calW - G_{\T_{\Y}} \calW) \|_{L^2(y^{\alpha},\C_\Y)} \lesssim h_{\T} \| w \|_{\mathbb{H}^{1+s}(\Omega)}.
\end{equation}
In addition, if $\we$ denotes the $\alpha$--harmonic extension of $w$, \ie the solution to \eqref{eq:mathcalW} with $w \in \mathbb{H}^s(\Omega)$ as a datum, then
\begin{equation}
 \label{eq:G_approx_2}
\| \tr (\we - G_{\T_{\Y}} \we) \|_{\Hs} \lesssim \| \nabla (\we - G_{\T_{\Y}} \we) \|_{L^2(y^{\alpha},\C)} \lesssim h_{\T} \| w \|_{\mathbb{H}^{1+s}(\Omega)}.
\end{equation}
In both inequalities the hidden constants are independent of $\calW$, $\we$, $w$ and $h_{\T}$.
\label{lemma:elliptic_projection}
\end{lemma}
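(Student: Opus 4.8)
The plan is to prove the two estimates in Lemma~\ref{lemma:elliptic_projection} by combining Galerkin quasi--optimality of the projector $G_{\T_\Y}$ with the anisotropic, power--exponentially weighted regularity of the (truncated) $\alpha$--harmonic extension. Since $G_{\T_\Y}$ is the $a_\Y$--orthogonal projection onto $\V(\T_\Y)$, C\'ea's lemma together with the norm equivalence \eqref{eq:norm-C} gives, for any $W \in \V(\T_\Y)$,
\[
 \| \nabla(\calW - G_{\T_\Y}\calW) \|_{L^2(y^\alpha,\C_\Y)} \lesssim \| \nabla(\calW - W) \|_{L^2(y^\alpha,\C_\Y)}.
\]
Thus the entire task reduces to constructing a single good interpolant $W \in \V(\T_\Y) = S^1_0(\Omega,\T) \otimes S^{\bmr}_{\{\Y\}}((0,\Y),\calG^M_\sigma)$ and bounding the tensor--product interpolation error.

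The key step is to exploit the tensor structure by splitting the interpolation error in the standard way: first I would introduce the semidiscrete interpolant $\Pi_{x'}\calW$ that discretizes only the $\Omega$--variable via the $L^2(\Omega)$--projection \eqref{eq:ortogonal_projection}, and then apply an $hp$--interpolant $I_y$ in the extended variable. Writing $\calW - W = (\calW - \Pi_{x'}\calW) + \Pi_{x'}(\calW - I_y\calW) + (\mathrm{cross\ term})$, I would control the $\Omega$--direction error using the approximation property \eqref{eq:ortogonal_projection_approximation} (which needs $H^2(\Omega)$--regularity of $\calW$, available through $\GRAD_{x'}\partial_y$ and $\mathcal L_{x'}\partial_y$ bounds), and control the $y$--direction error using the analyticity of $\calW$ in $y$ encoded in the countably normed estimates \eqref{eq:reg_in_y_gamma_1_gammaeq2}--\eqref{eq:reg_in_y_gamma_3_gammaeq2} of Proposition~\ref{pro:L2_time_bounds}. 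The geometric grading \eqref{eq:graded_meshes} towards $y=0$ with a linear degree vector is exactly what is required to resolve the $y^{\alpha}$--type singularity of $\psi_k$ near $y=0$, and the $hp$--approximation theory on geometric meshes then yields exponential convergence in $M$ in the $y$--direction provided the slope $\slope$ exceeds a threshold $\slope_{\min}$; balancing this against $\Y \sim |\log h_\T|$ absorbs the exponential factor $e^{\theta y}$ from the weight $\omega_{\beta,\theta}$ and produces the clean rate $h_\T \| w \|_{\mathbb{H}^{1+s}(\Omega)}$. This is precisely the argument carried out in \cite{BMNOSS:17,NOS3}, so I would cite those works for the technical $hp$ estimates rather than reproduce them.

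The main obstacle is the interplay between the truncation height $\Y$ and the weight. The regularity estimates of Proposition~\ref{pro:L2_time_bounds} are stated in the exponentially growing weight $\omega_{\alpha+2\ell-2\sigma,\theta}$ with $\theta < 2\sqrt{\lambda_1}$, whereas the projection error \eqref{eq:G_approx_1} is measured in the plain weight $y^\alpha$ on $\C_\Y$. Converting between the two requires that the factor $e^{-\theta y}$ lost by passing from $\omega_{\beta,\theta}$ to $y^\alpha$ on $(0,\Y)$ be dominated, and that the choice $\Y \sim |\log h_\T|$ be made with a \emph{sufficiently large} constant so that the resulting $e^{-\theta\Y}$ (or $\Y$--polynomial) factors do not degrade the algebraic rate $h_\T$. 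Getting the bookkeeping of these weighted norms and the $\slope_{\min}$ threshold consistent is the delicate part; the exponential $hp$--rate in $M$ must beat the polynomial growth of the factorials $\kappa^{2(\ell+1)}(\ell+1)!^2$ appearing in the countably normed estimates, which is exactly the content of the analytic--regularity $hp$--FEM machinery.

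Finally, the second estimate \eqref{eq:G_approx_2} follows from the first with essentially no extra work: the inequality $\| \tr(\we - G_{\T_\Y}\we) \|_{\Hs} \lesssim \| \nabla(\we - G_{\T_\Y}\we) \|_{L^2(y^\alpha,\C)}$ is immediate from the trace estimate \eqref{Trace_estimate} applied to $\we - G_{\T_\Y}\we$ (interpreting $G_{\T_\Y}\we$ as the zero extension of its truncated value), while the bound on $\| \nabla(\we - G_{\T_\Y}\we) \|_{L^2(y^\alpha,\C)}$ is obtained by combining the exponential decay of $\we$ on $\C \setminus \C_\Y$ (as in Proposition~\ref{pro:exp_decayment}, which controls the tail once $\Y \sim |\log h_\T|$) with the truncated estimate \eqref{eq:G_approx_1}; here the difference between $\we$ and $\calW$ contributes only an exponentially small $e^{-\sqrt{\lambda_1}\Y/4}$ term that is absorbed into the $h_\T$ rate by the logarithmic choice of $\Y$.
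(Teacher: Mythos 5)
Your proposal follows essentially the same route as the paper: Galerkin quasi--optimality for $G_{\T_\Y}$, a tensor--product interpolant $\Pi_{x'}\otimes \Pi^{\bmr}_{y,\{\Y\}}$ split into an $x'$--direction error controlled by \eqref{eq:ortogonal_projection_approximation} and a $y$--direction error controlled by the exponential $hp$--estimates of \cite{BMNOSS:17}, and, for \eqref{eq:G_approx_2}, the trace estimate \eqref{Trace_estimate} plus the exponential tail of $\we$ on $\C\setminus\C_\Y$. The only slight misattribution is that the analytic regularity needed here is that of the \emph{stationary} truncated extension $\calW=\calH_\alpha w$ (the paper cites \cite[Theorem 4.7]{BMNOSS:17}) rather than the time--dependent countably normed estimates of Proposition~\ref{pro:L2_time_bounds}, but the underlying machinery is identical.
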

\begin{proof}
Let $\Pi_{y, \{ \Y \}}^ {\bmr }$ and $\Pi_{x'}$ be the univariate $hp$--interpolation operator of \cite[Section 5.5.1]{BMNOSS:17} and the standard $L^2(\Omega)$--projection operator defined in \eqref{eq:ortogonal_projection}, respectively:
\begin{equation}
\Pi_{y,\{ \Y \}}^{\bmr}: C([0,\Y]) \rightarrow S_{\{\Y\}}^\bmr((0,\Y),\calG_{\sigma}^M),
\quad
\Pi_{x'}: L^2(\Omega) \rightarrow S^1_0(\Omega,\T).
\end{equation}

Set $W = \Pi_{x'} \otimes \Pi_{y, \{ \Y \}}^ {\bmr } \mathcal{W}$. Since $W \in \V(\T_{\Y})$, Galerkin orthogonality and definition \eqref{eq:elliptic_projection} yield
\[
 \| \nabla (\calW - G_{\T_{\Y}} \calW) \|^2_{L^2(y^{\alpha},\C_\Y)} \lesssim a_{\Y}(\calW - G_{\T_{\Y}} \calW,\calW - W).
\]
It suffices to bound $ \| \nabla (\calW - W) \|_{L^2(y^{\alpha},\C_\Y)}$. The stability properties of $\Pi_{x'}$, as described in \eqref{eq:ortogonal_projection_bounded}, reveal that
\[
 \| \nabla (\calW - W) \|_{L^2(y^{\alpha},\C_\Y)} \lesssim \| \nabla(\calW - \Pi_{x'} \calW)\|_{L^2(y^{\alpha},\C_{\Y})} +  \| \nabla(\calW -  \Pi_{y, \{ \Y \}}^ {\bmr } \mathcal{W})\|_{L^2(y^{\alpha},\C_{\Y})}.
\]
The estimate  \eqref{eq:G_approx_1} thus follows from the approximation properties of $\Pi_{x'}$ as described in \eqref{eq:ortogonal_projection_approximation}, the exponential interpolation error estimates of \cite[Lemma 5.13]{BMNOSS:17}, and the regularity properties of $\mathcal{W}$ \cite[Theorem 4.7]{BMNOSS:17}. The estimate \eqref{eq:G_approx_2} follows similar arguments by using first the exponential decay of $\we$ in the extended dimension \cite[Proposition 3.1]{NOS}:
\begin{align*}
 \| \nabla (\we - G_{\T_{\Y}} \we) \|_{L^2(y^{\alpha},\C)} & \leq \| \nabla \we  \|_{L^2(y^{\alpha},\C \setminus \C_{\Y})}  +  \| \nabla (\we - G_{\T_{\Y}} \we) \|_{L^2(y^{\alpha},\C_{\Y})} 
 \\
 & \lesssim  e^{-\sqrt{\lambda_1} \Y /2 }+  \| \nabla (\we - G_{\T_{\Y}} \we) \|_{L^2(y^{\alpha},\C_{\Y})}.
\end{align*}
This concludes the proof.
\end{proof}

The following improved estimate for the weighted elliptic projection $G_{\T_{\Y}}$ in the $L^2(\Omega)$--norm can be obtained by invoking the estimates of Lemma \ref{lemma:elliptic_projection} and the arguments elaborated in the proof of \cite[Proposition 28]{NOS3}.

\begin{lemma}[$L^2(\Omega)$--error estimates for $G_{\T_{\Y}}$]
Let $\calG_{\sigma}^M$ be the geometric mesh defined in \eqref{eq:graded_meshes} where $\Y \sim | \log h_{\T}|$ with a sufficiently large constant. Let $w \in \mathbb{H}^{1+s}(\Omega)$. If $\calW$ denotes the truncated $\alpha$--harmonic extension of $w$, then there exists a minimal slope $\slope_{min}$ such that for linear degree vectors $\bmr$ with slope $\slope \ge \slope_{min}$ there holds 
\begin{equation}
 \label{eq:G_approx_3}
 \| \tr( \calW - G_{\T_{\Y}} \calW)  \|_{L^2(\Omega)} \lesssim h_{\T}^{1+s} \| w \|_{\mathbb{H}^{1+s}(\Omega)}.
\end{equation}
In addition, if $\we$ denotes the $\alpha$--harmonic extension of $w$, \ie the solution to \eqref{eq:mathcalW} with $w \in \mathbb{H}^{1+s}(\Omega)$ as a datum, then
\begin{equation}
 \label{eq:G_approx_4}
\| \tr (\we - G_{\T_{\Y}} \we) \|_{L^2(\Omega)} \lesssim h_{\T}^{1+s} \| w \|_{\mathbb{H}^{1+s}(\Omega)}.
\end{equation}
In both inequalities the hidden constants are independent of $\calW$, $\we$, $w$ and $h_{\T}$.
\label{lemma:elliptic_projection_2}
\end{lemma}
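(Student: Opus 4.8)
The plan is to run an Aubin--Nitsche duality argument, exactly along the lines of the proof of \cite[Proposition 28]{NOS3}, exploiting the Galerkin orthogonality built into the definition \eqref{eq:elliptic_projection} of $G_{\T_{\Y}}$. I would write the argument in detail for \eqref{eq:G_approx_3}; the companion estimate \eqref{eq:G_approx_4} then follows identically after splitting off the tail $\C \setminus \C_{\Y}$ and absorbing it through the exponential decay of $\we$, precisely as in the passage from \eqref{eq:G_approx_1} to \eqref{eq:G_approx_2} in Lemma \ref{lemma:elliptic_projection}.

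Write $e = \calW - G_{\T_{\Y}}\calW$ and set $\psi = \tr e \in L^2(\Omega)$. First I would introduce the dual solution $z \in \Hs$ of $\mathcal{L}^s z = \psi$ together with its truncated $\alpha$--harmonic extension $Z = \calH_{\alpha} z$. An integration by parts combined with the Caffarelli--Silvestre identity $\partial_{\nu}^{\alpha} Z = d_s \mathcal{L}^s z$, together with the fact that $\phi$ and $Z$ vanish on $\partial_L \C_{\Y} \cup \Omega_{\Y}$, shows that $a_{\Y}(\phi, Z) = (\psi, \tr \phi)_{L^2(\Omega)}$ for every $\phi \in \HL(y^{\alpha},\C_{\Y})$. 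Taking $\phi = e$ represents the quantity of interest as $\|\tr e\|^2_{L^2(\Omega)} = a_{\Y}(e, Z)$. Galerkin orthogonality, i.e.\ $a_{\Y}(e, W) = 0$ for all $W \in \V(\T_{\Y})$, then lets me subtract $W = G_{\T_{\Y}} Z$ and bound, by continuity of $a_{\Y}$, the quantity $\|\tr e\|^2_{L^2(\Omega)} = a_{\Y}(e, Z - G_{\T_{\Y}}Z) \lesssim \|\nabla e\|_{L^2(y^{\alpha},\C_{\Y})}\,\|\nabla(Z - G_{\T_{\Y}}Z)\|_{L^2(y^{\alpha},\C_{\Y})}$.

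The first factor is controlled by the energy estimate \eqref{eq:G_approx_1} of Lemma \ref{lemma:elliptic_projection}, giving $\|\nabla e\|_{L^2(y^{\alpha},\C_{\Y})} \lesssim h_{\T}\|w\|_{\mathbb{H}^{1+s}(\Omega)}$. For the second factor I would exploit the extra regularity of the dual datum: since $\psi \in L^2(\Omega)$, the spectral definition \eqref{def:second_frac} yields $z = \mathcal{L}^{-s}\psi \in \mathbb{H}^{2s}(\Omega)$ with $\|z\|_{\mathbb{H}^{2s}(\Omega)} = \|\psi\|_{L^2(\Omega)}$. The main obstacle is precisely that I cannot feed this $z$ directly into \eqref{eq:G_approx_1}, which requires a $\mathbb{H}^{1+s}(\Omega)$ boundary datum, whereas $2s < 1+s$; I need instead the reduced--regularity approximation estimate $\|\nabla(Z - G_{\T_{\Y}}Z)\|_{L^2(y^{\alpha},\C_{\Y})} \lesssim h_{\T}^{s}\|z\|_{\mathbb{H}^{2s}(\Omega)}$. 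I expect to obtain it by interpolating the linear map $v \mapsto \nabla(\calH_{\alpha}v - G_{\T_{\Y}}\calH_{\alpha}v)$ between its bound $\lesssim h_{\T}$ on $\mathbb{H}^{1+s}(\Omega)$ data (this is \eqref{eq:G_approx_1}) and the trivial stability bound $\|\nabla(\calH_{\alpha}v - G_{\T_{\Y}}\calH_{\alpha}v)\|_{L^2(y^{\alpha},\C_{\Y})} \lesssim \|\nabla \calH_{\alpha}v\|_{L^2(y^{\alpha},\C_{\Y})} \lesssim \|v\|_{\Hs}$ on $\mathbb{H}^{s}(\Omega)$ data, which follows from \eqref{eq:G_stable} and the trace relation \eqref{Trace_estimate}. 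Since $\mathbb{H}^{2s}(\Omega) = [\mathbb{H}^{s}(\Omega),\mathbb{H}^{1+s}(\Omega)]_{s}$, the interpolation exponent $\theta$ solving $2s = (1-\theta)s + \theta(1+s)$ is $\theta = s$, which produces exactly the factor $h_{\T}^{s}$.

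Inserting both bounds gives $\|\tr e\|^2_{L^2(\Omega)} \lesssim h_{\T}^{1+s}\|w\|_{\mathbb{H}^{1+s}(\Omega)}\,\|z\|_{\mathbb{H}^{2s}(\Omega)}$, and cancelling the single power $\|z\|_{\mathbb{H}^{2s}(\Omega)} = \|\psi\|_{L^2(\Omega)} = \|\tr e\|_{L^2(\Omega)}$ yields the claimed $\|\tr e\|_{L^2(\Omega)} \lesssim h_{\T}^{1+s}\|w\|_{\mathbb{H}^{1+s}(\Omega)}$. The delicate point throughout is that the interpolation must be performed at the level of the nonuniformly elliptic extension, tracking the norm equivalence $\normC{\cdot} \sim \|\nabla \cdot\|_{L^2(y^{\alpha},\C)}$ and the trace map \eqref{Trace_estimate}; this is exactly the content supplied by the argument of \cite[Proposition 28]{NOS3}, which I would invoke to close the proof.
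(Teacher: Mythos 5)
Your argument is correct and is essentially the paper's own proof: the paper gives no details and simply invokes Lemma \ref{lemma:elliptic_projection} together with the Aubin--Nitsche duality argument of \cite[Proposition 28]{NOS3}, which is precisely the scheme you carry out, including the interpolation between the $\mathbb{H}^{s}(\Omega)$ stability bound and the $\mathbb{H}^{1+s}(\Omega)$ energy estimate to gain the factor $h_{\T}^{s}$ on the dual solution. The only cosmetic repair needed is in the definition of the dual extension: on the truncated cylinder the identity $a_{\Y}(\phi,Z)=(\psi,\tr\phi)_{L^2(\Omega)}$ is exact only if $Z$ is \emph{defined} as the solution of that variational problem (whose trace solves a truncated fractional equation differing from $\mathcal{L}^{-s}\psi$ by terms that are exponentially small in $\Y\sim|\log h_{\T}|$), rather than as $\calH_{\alpha}\mathcal{L}^{-s}\psi$; with that adjustment the proof closes as you describe.
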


\subsection{Time discretization}


Let $K \in \mathbb{N}$ be the number of time steps. We define the uniform time step as $\dt=T/K$, and we set $t_k = k \dt$, $k=0,\dots, K$. If $\Xcal$ is a normed space with norm $\| \cdot \|_{\Xcal}$, then for $w \in C([0,T];\Xcal)$ we denote $w_k = w(t_k) \in \Xcal$ and $w_{\dt} = \{ w_k \}_{k=0}^{K} \subset \Xcal$. In addition, for $w_{\dt} \subset \Xcal$ and $p \in [1,\infty)$, we define
\begin{equation}
\label{eq:definition_norm}
\| w_{\dt} \|_{\ell^p(\Xcal)} = \left( \sum_{k=1}^K \dt \| w_k\|_{\Xcal}^p \right)^{\frac{1}{p}}, 
\quad
\| w_{\dt} \|_{\ell^{\infty}(\Xcal)} = \max_{0 \leq k \leq K} \| w_k \|_{\Xcal}.
\end{equation}
For a sequence of time--discrete functions $w_{\dt} \subset \Xcal$, we define, for $k=0,\dots,K-1$,
\begin{equation}
\label{eq:frakd}
 \mathfrak{d}w_{k+1}:= (\dt)^{-1}(w_{k+1} - w_k), \qquad w_{k+1/2}:= \tfrac12(w_{k+1} + w_k).
\end{equation}
We also define, for $k = 1, \dots, K-1$, 
\begin{equation}
\label{eq:barwn}
\mathfrak{c}w_k := \tfrac{1}{4}(w_{k+1}+2w_k+w_{k-1})  = \tfrac12(w_{k+1/2}+w_{k-1/2}),
\end{equation}
and
\begin{equation}
\label{eq:partial_discrete_2}
\mathfrak{d}^2w_k:= (\dt)^{-2}(w_{k+1} - 2w_k + w_{k-1}).
\end{equation}

\subsection{Trapezoidal multistep method}
\label{subsec:trapezoidal}

Let us now describe our first fully discrete numerical scheme to solve problem \eqref{eq:truncated_weak_wave}. The space discretization is based on the finite element method on the truncated cylinder $\C_{\Y}$ described in Section \ref{subsec:fems}. The discretization in time is based on a trapezoidal multistep method.

The fully discrete scheme computes the sequence $V_{\dt} \subset \V(\T_{\Y})$, an approximation of the solution to \eqref{eq:truncated_weak_wave} at each time step. We initialize the scheme by setting 
\begin{equation}
\label{eq:time_disc_initeofd}
V_0 =G_{\T_{\Y}}\calH_{\alpha} g, \quad  V_1 =  G_{\T_{\Y}} \left(\mathcal{H}_{\alpha}g + \dt \mathcal{H}_{\alpha} h + \frac{1}{2}(\dt)^2 \partial_t^2\Ucal(0)\right),
\end{equation}
where $\Hcal_{\alpha}$ denotes the truncated $\alpha$--harmonic extension  and $\partial_t^2\Ucal(0) = \calH_\alpha w$ with $w \in \Hs$ satisfying 
\begin{equation}
\label{eq:Ucal0}
\langle w ,  \tr \phi \rangle = - a_\Y(\Ucal(0), \phi) + \langle f(0), \tr \phi \rangle \qquad \forall \phi \in \HL(y^{\alpha},\C_{\Y}).
\end{equation}
Note that, if $\tr \phi = 0$ the previous equation is satisfied for any $w$. 

For $k=1, \dots,K-1$, let $V_{k+1} \in \V(\T_{\Y})$ solve
\begin{equation}
\label{eq:time_disc_waveeofd}
\frac1{\dt^2} \langle \tr(V_{k+1}-2V_k+V_{k-1}), \tr W \rangle + a_\Y(\frakc V_k,W) = \langle \frakc f_k, \tr W \rangle
\end{equation}
for all $W\in \V(\T_{\Y})$, where $\frakc V_k$ and $\frakc f_k$ are defined in \eqref{eq:barwn}. To obtain an approximate solution to the fractional wave equation \eqref{eq:fractional_wave}, we define the sequence 
\begin{equation}
\label{eq:U_approximation}
 U_{\dt} = \{ U_k \}_{k=0}^{K} \subset S_0^1(\T,\Omega): \quad U_{\dt} := \tr V_{\dt}.
\end{equation}

\begin{remark}[locality]\rm
The main advantage of problem \eqref{eq:time_disc_initeofd}--\eqref{eq:time_disc_waveeofd} is that it provides an approximated solution to the fractional wave equation \eqref{eq:fractional_wave} based on the resolution of the \emph{local} elliptic problem
with a dynamic boundary condition \eqref{eq:truncated_weak_wave}.
\end{remark}


To present the stability of the scheme we introduce, for $k=1,\dots,K$, the unconditionally nonnegative discrete energy
\begin{equation}
 E_k(W_{\dt}) :=  \tfrac12\| \tr \frakd W_{k} \|_{L^2(\Omega)}^2  + \tfrac12\normC{W_{k-1/2}}^2.
\end{equation}

\begin{lemma}[energy conservation]
If $f \equiv 0$, the fully discrete scheme \eqref{eq:time_disc_initeofd}--\eqref{eq:time_disc_waveeofd} conserves energy, \ie for all $k \in  \{1,\dots,K\}$, we have that
\begin{equation}
\label{eq:energy_conserved}
E_k( V_{\dt} ) = E_1( V_{\dt} ).
\end{equation}
If $f \neq 0$, then, for $\ell \in \{1,\dots,K\}$, we have that
\begin{equation}
\label{eq:stability_trapezoidal}
E_{\ell}(V_{\dt})^{\frac{1}{2}} 
\leq  
E_{1}(V_{\dt})^{\frac{1}{2}} 
+ 
\frac{1}{\sqrt{2}} \sum_{k=1}^{\ell} \dt \| \frakc f_k \|_{L^2(\Omega)}.
\end{equation}
\label{lemma:sd}
In particular, we have that $E_{K}(V_{\dt})^{\frac{1}{2}} \leq E_{1}(V_{\dt})^{\frac{1}{2}} + \tfrac{1}{\sqrt{2}}\| \frakc f \|_{\ell^1(L^2(\Omega))}$.
\end{lemma}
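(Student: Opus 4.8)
The plan is to prove an energy identity by testing the discrete equation \eqref{eq:time_disc_waveeofd} with the natural discrete analogue of $\partial_t V$, namely $\tr W = \tr \mathfrak{d}V_{k+1/2}$ (equivalently $W = \mathfrak{d}V_{k+1/2}$ up to the appropriate centering), and then telescoping the resulting terms over the time index. First I would rewrite the scheme in the centered form
\[
\langle \tr \mathfrak{d}^2 V_k, \tr W \rangle + a_\Y(\mathfrak{c}V_k, W) = \langle \mathfrak{c}f_k, \tr W \rangle,
\]
recalling from \eqref{eq:partial_discrete_2} and \eqref{eq:barwn} that $\mathfrak{d}^2 V_k = \dt^{-2}(V_{k+1}-2V_k+V_{k-1})$ and $\mathfrak{c}V_k = \tfrac12(V_{k+1/2}+V_{k-1/2})$. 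The crucial algebraic observation, which drives the whole argument, is that $\mathfrak{d}^2 V_k = \mathfrak{d}(\mathfrak{d}V_{k+1/2} - \mathfrak{d}V_{k-1/2})/\dt$ pairs telescopically with the increment $\tfrac12(\mathfrak{d}V_{k+1/2}+\mathfrak{d}V_{k-1/2})$, and likewise $\mathfrak{c}V_k = \tfrac12(V_{k+1/2}+V_{k-1/2})$ pairs telescopically with $\mathfrak{d}V_{k} = \dt^{-1}(V_{k+1/2}-V_{k-1/2})$ in the bilinear form. This is exactly the structure that makes the trapezoidal/leapfrog scheme conservative.

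Concretely I would choose $W = V_{k+1/2} - V_{k-1/2} = \dt\,\mathfrak{d}V_k$ as the test function. With this choice, using the symmetry of $a_\Y$, the elliptic term becomes
\[
a_\Y(\mathfrak{c}V_k, V_{k+1/2}-V_{k-1/2}) = \tfrac12\left(\normC{V_{k+1/2}}^2 - \normC{V_{k-1/2}}^2\right),
\]
since $a_\Y(\tfrac12(V_{k+1/2}+V_{k-1/2}), V_{k+1/2}-V_{k-1/2})$ is precisely the difference-of-squares coming from the energy norm $\normC{\cdot}^2 = a_\Y(\cdot,\cdot)$. For the inertial term, the pairing $\langle \tr(\mathfrak{d}V_{k+1/2}-\mathfrak{d}V_{k-1/2}), \tr(V_{k+1/2}-V_{k-1/2})\rangle$, after expressing $V_{k+1/2}-V_{k-1/2} = \tfrac{\dt}{2}(\mathfrak{d}V_{k+1/2}+\mathfrak{d}V_{k-1/2})$, collapses to $\tfrac{\dt}{2}(\|\tr\mathfrak{d}V_{k+1/2}\|_{L^2(\Omega)}^2 - \|\tr\mathfrak{d}V_{k-1/2}\|_{L^2(\Omega)}^2)$ via the same difference-of-squares identity. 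Summing these, the left-hand side telescopes to $\dt\,(E_{k+1}(V_\dt) - E_k(V_\dt))$, giving the exact increment of the discrete energy; when $f\equiv 0$ the right-hand side vanishes and \eqref{eq:energy_conserved} follows by summing from $k=1$ to $\ell-1$.

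For the forcing case I would retain the right-hand side $\langle \mathfrak{c}f_k, \tr W\rangle = \langle \mathfrak{c}f_k, \tr(V_{k+1/2}-V_{k-1/2})\rangle$ and bound it using Cauchy--Schwarz in $L^2(\Omega)$ together with $\tr(V_{k+1/2}-V_{k-1/2}) = \tfrac{\dt}{2}(\tr\mathfrak{d}V_{k+1/2}+\tr\mathfrak{d}V_{k-1/2})$, yielding a bound of the form $\tfrac{\dt}{\sqrt2}\|\mathfrak{c}f_k\|_{L^2(\Omega)}$ times the sum $E_{k+1}^{1/2}+E_k^{1/2}$ of square-root energies at consecutive steps. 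Dividing the telescoped identity $E_{k+1}-E_k \le \tfrac{1}{\sqrt2}\|\mathfrak{c}f_k\|_{L^2(\Omega)}\dt\,(E_{k+1}^{1/2}+E_k^{1/2})$ by $E_{k+1}^{1/2}+E_k^{1/2}$ gives $E_{k+1}^{1/2}-E_k^{1/2} \le \tfrac{\dt}{\sqrt2}\|\mathfrak{c}f_k\|_{L^2(\Omega)}$, and summing this telescoping inequality from $k=1$ to $\ell-1$ produces \eqref{eq:stability_trapezoidal}; the final $\ell^1$ statement is the case $\ell=K$ under definition \eqref{eq:definition_norm}. The main obstacle I anticipate is purely bookkeeping: getting the half-integer indices and the factors of $\dt$ and $\tfrac12$ to align so that both the inertial and elastic terms telescope cleanly into $E_{k+1}-E_k$, and handling the division step rigorously when some $E_k^{1/2}$ could be small (which is harmless since the inequality $E_{k+1}^{1/2}-E_k^{1/2}\le\cdots$ holds trivially whenever the denominator vanishes). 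No deep estimate is needed beyond symmetry of $a_\Y$, the identity $\normC{\cdot}^2=a_\Y(\cdot,\cdot)$, and Cauchy--Schwarz.
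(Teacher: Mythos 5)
Your proof is correct and follows essentially the same route as the paper: the paper tests \eqref{eq:time_disc_waveeofd} with $W=(2\dt)^{-1}(V_{k+1}-V_{k-1})=(\dt)^{-1}(V_{k+1/2}-V_{k-1/2})$ (a $\dt^{-1}$ multiple of your choice), obtains the exact increment identity $\dt^{-1}\left(E_{k+1}(V_{\dt})-E_k(V_{\dt})\right)=\tfrac12\langle \frakc f_k,\tr(\frakd V_{k+1}+\frakd V_k)\rangle$ from the same difference-of-squares cancellations, and then applies Cauchy--Schwarz, divides by $E_{k+1}^{1/2}+E_k^{1/2}$, and telescopes. The only blemishes are notational: $V_{k+1/2}-V_{k-1/2}=\tfrac12(V_{k+1}-V_{k-1})=\tfrac{\dt}{2}(\frakd V_{k+1}+\frakd V_k)$, not $\dt\,\frakd V_k$, and with your unscaled test function the left side telescopes to $E_{k+1}-E_k$ rather than $\dt\,(E_{k+1}-E_k)$; neither slip propagates, since the final inequalities you state are the correct ones.
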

\begin{proof}
Set $W=(2\dt)^{-1}(V_{k+1} - V_{k-1}) = 2^{-1}(\frakd V_{k+1} + \frakd V_k) = (\dt)^{-1}(V_{k+1/2}-V_{k-1/2})$ in \eqref{eq:time_disc_waveeofd}. Basic computations reveal that
\begin{equation}
\label{eq:step}
\frac{1}{\dt}\left( E_{k+1}(V_{\dt}) -E_k(V_{\dt})\right)
= \frac{1}{2}\langle \frakc f_k, \tr(\frakd V_{k+1} + \frakd V_{k}) \rangle.
\end{equation}
If $f \equiv 0$, the previous relation immediately yields \eqref{eq:energy_conserved}. If $f \neq 0$, an application of the Cauchy--Schwarz inequality allow us to conclude that
\begin{equation*}
E_{k+1}(V_{\dt}) -E_k(V_{\dt})
\leq 
\frac{\dt}{\sqrt{2}}  \| \frakc f_k  \|_{L^2(\Omega)}  \left(  E_{k+1}(V_{\dt})^{\frac{1}{2}} + E_{k}(V_{\dt})^{\frac{1}{2}}\right),
\end{equation*}
which yields
$
E_{k+1}(V_{\dt})^{\frac{1}{2}} -E_k(V_{\dt})^{\frac{1}{2}}
\leq \frac{\dt}{\sqrt{2}}  \|\frakc f_k  \|_{L^2(\Omega)}.
$
Adding over $\ell$ we arrive at the desired estimate \eqref{eq:stability_trapezoidal}. This concludes the proof.
\end{proof}
Let us now show the stability of the scheme.
\begin{lemma}[stability]
The fully discrete scheme \eqref{eq:time_disc_initeofd}--\eqref{eq:time_disc_waveeofd} is stable, namely, for $\ell \in \{1,\dots,K\}$, we have that
\[
\| \tr \frakd V_\ell \|_{L^2(\Omega)}  + \normC{V_{\ell-1/2}} \lesssim \| \tr \frakd V_1 \|_{L^2(\Omega)}  + \normC{V_{1/2}} + \sum_{k=1}^{\ell} \dt \| \frakc f_k \|_{L^2(\Omega)},
\]
where the hidden constant is independent of $V_{\dt}$ and $\dt$.
\end{lemma}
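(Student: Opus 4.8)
The plan is to derive the stated stability bound directly from the energy-conservation result of Lemma~\ref{lemma:sd}, since the discrete energy $E_k(V_{\dt})$ already encodes precisely the two quantities appearing on the left-hand side. First I would recall that, by definition,
\[
E_{\ell}(V_{\dt}) = \tfrac12 \| \tr \frakd V_{\ell} \|_{L^2(\Omega)}^2 + \tfrac12 \normC{V_{\ell-1/2}}^2,
\]
so that the two nonnegative summands are each controlled by $2 E_{\ell}(V_{\dt})$. Hence $\| \tr \frakd V_{\ell} \|_{L^2(\Omega)} + \normC{V_{\ell-1/2}} \leq 2\sqrt{2}\, E_{\ell}(V_{\dt})^{1/2}$, which reduces the claim to bounding $E_{\ell}(V_{\dt})^{1/2}$.

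Next I would invoke estimate~\eqref{eq:stability_trapezoidal} from Lemma~\ref{lemma:sd}, which furnishes
\[
E_{\ell}(V_{\dt})^{1/2} \leq E_1(V_{\dt})^{1/2} + \tfrac{1}{\sqrt 2} \sum_{k=1}^{\ell} \dt \| \frakc f_k \|_{L^2(\Omega)}.
\]
Combining this with the previous elementary bound yields
\[
\| \tr \frakd V_{\ell} \|_{L^2(\Omega)} + \normC{V_{\ell-1/2}} \lesssim E_1(V_{\dt})^{1/2} + \sum_{k=1}^{\ell} \dt \| \frakc f_k \|_{L^2(\Omega)}.
\]
It then remains only to re-express the initial term $E_1(V_{\dt})^{1/2}$ in terms of the first-step data that appear on the right-hand side of the statement. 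By the very definition of $E_1$ and the elementary inequality $\sqrt{a+b} \leq \sqrt a + \sqrt b$, we have $E_1(V_{\dt})^{1/2} \lesssim \| \tr \frakd V_1 \|_{L^2(\Omega)} + \normC{V_{1/2}}$, which matches the desired right-hand side exactly.

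The argument is therefore essentially a bookkeeping exercise: all the analytic work has already been carried out in Lemma~\ref{lemma:sd}, and what remains is to convert the quadratic energy identity into the linear, additive form stated here. The only point requiring mild care is the manipulation of square roots—specifically the subadditivity $\sqrt{a+b}\le\sqrt a+\sqrt b$ used in both directions (once to extract the two summands from $E_\ell^{1/2}$ and once to bound $E_1^{1/2}$ by the first-step norms)—and verifying that the multiplicative constants absorbed into $\lesssim$ are indeed independent of $V_{\dt}$ and $\dt$, which is clear since they arise only from fixed numerical factors such as $2\sqrt 2$. I do not anticipate any genuine obstacle; the main subtlety, if any, is simply ensuring that the telescoping in~\eqref{eq:stability_trapezoidal} has already accounted for all intermediate steps so that no additional summation over $k$ is needed here.
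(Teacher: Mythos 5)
Your proposal is correct and follows exactly the paper's route: the paper's proof consists of the single line that the estimate ``follows immediately from \eqref{eq:stability_trapezoidal}'', and your argument simply spells out the elementary square-root bookkeeping (bounding each summand of the left-hand side by $E_\ell^{1/2}$ and bounding $E_1^{1/2}$ by the first-step norms) that the authors left implicit. No gaps.
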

\begin{proof}
The proof follows immediately from \eqref{eq:stability_trapezoidal}.
\end{proof}

Let us now present an error analysis for the fully discrete scheme \eqref{eq:time_disc_initeofd}--\eqref{eq:time_disc_waveeofd}. To accomplish this task, we introduce the error $e_{\dt}:= V_{\dt} - \Ucal_{\dt}$, and write, as usual
\begin{equation}
\label{eq:spliting}
 e_{\dt} =  (V_{\dt} - G_{\T_{\Y}} \Ucal_{\dt}) + (G_{\T_{\Y}} \Ucal_{\dt}-\Ucal_{\dt}) =: \Theta_{\dt} + P_{\dt}.
\end{equation}
The control of $P_{\dt}$ follows from \eqref{eq:G_approx_1} and \eqref{eq:G_approx_3}: For $\ell \in \{0,1,2\}$, we have that 
\begin{equation}
\label{eq:nabla_P}
 \| \partial_t^{\ell} \nabla P_{\dt} \|_{\ell^2(L^2(y^{\alpha},\C_{\Y}))} \lesssim h_{\T} \mathfrak{A}(f,g,h)
\end{equation}
and
\begin{equation}
\label{eq:tr_P}
 \| \partial_t^{\ell} \tr P_{\dt} \|_{\ell^2(L^2(\Omega))} \lesssim h_{\T}^{1+s}\mathfrak{A}(f,g,h),
\end{equation}
where $\mathfrak{A}(f,g,h)$ is defined in \eqref{eq:frakA}. Notice that to obtain the estimates \eqref{eq:nabla_P} and \eqref{eq:tr_P} the regularity estimates of Corollary \ref{cor:st_reg_u} are essential.

In what follows we bound the sequence $\Theta_{\dt}$.

\begin{lemma}[error estimate for $\Theta_{\dt}$]
Let $\Ucal$ be the solution to \eqref{eq:truncated_weak_wave} and let $V_{\dt}$ be its fully discrete approximation defined as the solution to \eqref{eq:time_disc_initeofd}--\eqref{eq:time_disc_waveeofd}. If $\mathfrak{A}(f,g,h) < \infty$ and $\Xi(f,g,h) < \infty$, then 
\begin{equation}
 E_K(\Theta_{\dt})^{\frac{1}{2}} \lesssim h_{\T}^{1+s} \mathfrak{A}(f,g,h) 
 + (\dt)^2 \Xi(f,g,h),
 \label{eq:error_estimate_Theta}
\end{equation}
where $\Theta_{\dt} = V_{\dt} - G_{\T_{\Y}} \Ucal_{\dt}$, $\mathfrak{A}(f,g,h)$ and $\Xi(f,g,h)$ are defined in \eqref{eq:frakA} and \eqref{eq:Sigmanew}, respectively, and the hidden constant is independent of $V_{\dt}$, $\Ucal$, $\dt$, and $h_{\T}$.
\label{lemma:error_estimate_Theta}
\end{lemma}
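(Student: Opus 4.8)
The quantity to control is the discrete energy $E_K(\Theta_{\dt})$ of the auxiliary sequence $\Theta_{\dt} = V_{\dt} - G_{\T_{\Y}}\Ucal_{\dt}$. Since both $V_{\dt}$ and the projected exact solution $G_{\T_{\Y}}\Ucal_{\dt}$ lie in $\V(\T_{\Y})$, the natural strategy is to show that $\Theta_{\dt}$ itself satisfies the fully discrete scheme \eqref{eq:time_disc_waveeofd} but with a nonzero \emph{consistency residual} on the right--hand side, and then to apply the stability estimate \eqref{eq:stability_trapezoidal} of Lemma~\ref{lemma:sd} with $\Theta_{\dt}$ in place of $V_{\dt}$ and the residual in place of $\frakc f_k$.

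**Deriving the error equation.** First I would write, for $k=1,\dots,K-1$, the discrete equation satisfied by $V_{k+1}$, namely \eqref{eq:time_disc_waveeofd}, and subtract from it the corresponding identity obtained by testing the continuous truncated problem \eqref{eq:truncated_weak_wave} at the time levels $t_{k-1},t_k,t_{k+1}$ and forming the combination $\frakc$. Using the defining property \eqref{eq:elliptic_projection} of $G_{\T_{\Y}}$ to replace $a_\Y(\frakc\,\Ucal_k,W)$ by $a_\Y(\frakc\,G_{\T_{\Y}}\Ucal_k,W)$, the stiffness terms telescope and I obtain an identity of the form
\[
\frac1{\dt^2}\langle \tr(\Theta_{k+1}-2\Theta_k+\Theta_{k-1}),\tr W\rangle + a_\Y(\frakc\,\Theta_k,W) = \langle R_k,\tr W\rangle,
\]
where $R_k$ collects two kinds of error: the time--discretization (consistency) error of the trapezoidal/$\frakc$ scheme, measured by the difference between $\frakd^2\Ucal_k$ and $\partial_t^2\Ucal(t_k)$ and between $\frakc\,\Ucal_k$ and $\Ucal(t_k)$, and the projection error coming from $\tr(\frakd^2(G_{\T_{\Y}}-I)\Ucal_k)$. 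The clean way to keep track of the split is to write $R_k$ as a sum of a term involving $\partial_t^2\tr P_{\dt}$ (controlled by \eqref{eq:tr_P} with $\ell=2$) and a term involving the truncation error of the finite differences.

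**Applying stability and estimating the residual.** With the error equation in hand, the stability bound \eqref{eq:stability_trapezoidal} gives
\[
E_K(\Theta_{\dt})^{\frac12}\lesssim E_1(\Theta_{\dt})^{\frac12}+\sum_{k=1}^{K-1}\dt\,\|R_k\|_{L^2(\Omega)}.
\]
The initialization term $E_1(\Theta_{\dt})^{\frac12}$ is handled via the choice of $V_0,V_1$ in \eqref{eq:time_disc_initeofd}, which is precisely the second--order Taylor start, so its contribution is of order $h_{\T}^{1+s}\mathfrak{A}+(\dt)^2\Xi$. For the residual sum, I would express each finite--difference consistency error through an integral (Taylor) remainder, e.g. $\frakd^2\Ucal_k-\partial_t^2\Ucal(t_k)$ and $\frakc\,\Ucal_k-\Ucal(t_k)$ as weighted averages of $\partial_t^4\Ucal$ over $[t_{k-1},t_{k+1}]$; summing $\dt\|\cdot\|_{L^2(\Omega)}$ and passing to the $L^2(0,T;\cdot)$ norm via Cauchy--Schwarz converts these into a factor $(\dt)^2$ times fourth--order time derivatives of the trace, which are exactly bounded by $\Xi(f,g,h)$ through the time--regularity estimate \eqref{eq:partial_t_l_nabla_U4} (combined with the energy identity $\normC{\partial_t^{\ell}\nabla\ue}=\|\partial_t^{\ell}u\|_{\Hs}$ established in the proof of Theorem~\ref{thm:time_regularity}). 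The projection part of $R_k$ yields the $h_{\T}^{1+s}\mathfrak{A}(f,g,h)$ term via \eqref{eq:tr_P}.

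**Main obstacle.** The routine parts are the algebraic derivation of the error equation and the Taylor expansions; the delicate point is the consistency analysis of the operator $\frakc$ acting on the stiffness term. Because $\frakc\,\Theta_k$ mixes three time levels inside the bilinear form $a_\Y$, matching it against $\frakd^2\Theta_k$ requires the same telescoping structure that makes $E_k$ conserved, and one must be careful that the residual is measured only in the $L^2(\Omega)$--norm of the trace (so that Lemma~\ref{lemma:sd} applies) rather than in the energy norm $\normC{\cdot}$. Ensuring that every contribution — in particular the mixed term $\partial_t^2 P_{\dt}$ — is expressed against $\tr W$ and thus falls under the scalar forcing in \eqref{eq:stability_trapezoidal}, while absorbing the $\normC{\cdot}$ contributions into the left--hand energy, is where the argument must be handled with the most care. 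I expect the regularity hypothesis $\Xi(f,g,h)<\infty$ (which encodes $\partial_t^4 u\in L^\infty(0,T;\Hs)$ through $\partial_t^2 f\in L^2(0,T;\Hs)$) to be exactly what is needed to close the $(\dt)^2$ bound.
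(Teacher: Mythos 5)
Your proposal follows essentially the same route as the paper: derive the error equation for $\Theta_{\dt}$ with residual $\tr[\mathfrak{c}\,\partial_t^2\Ucal(t_k)-\frakd^2 G_{\T_{\Y}}\Ucal(t_k)]$ on the right--hand side, apply the stability estimate \eqref{eq:stability_trapezoidal}, split the residual into finite--difference consistency errors (Taylor, yielding $(\dt)^2$ times fourth--order time derivatives bounded via Theorem~\ref{thm:time_regularity}) plus the projection error $\frakd^2 P$ (bounded by \eqref{eq:tr_P} with $\ell=2$), and control $E_1(\Theta_{\dt})$ through the second--order Taylor initialization. This matches the paper's three--step argument in both decomposition and key lemmas.
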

\begin{proof}
We proceed in three steps.

Step 1. We invoke the continuous problem \eqref{eq:truncated_weak_wave}, the discrete equation \eqref{eq:time_disc_waveeofd}, and the definition of $G_{\T_\Y}$, given by \eqref{eq:elliptic_projection}, to arrive at the problem that controls the error:
For $k = 1,\dots,K-1$, $\Theta_{k+1} \in \V(\T_{\Y})$ solves
\begin{multline}
\frac1{\dt^2} \langle \tr(\Theta_{k+1}-2\Theta_k+\Theta_{k-1}), \tr W \rangle + a_\Y(\frakc \Theta_k,W) 
\\
= \langle \tr [ \mathfrak{c} \partial_t^2 \Ucal(t_k) - \frakd^2G_{\T_{\Y}} \Ucal(t_k)], \tr W \rangle \quad \forall W \in \V(\T_{\Y}),
\label{eq:Theta_equation}
\end{multline}
where $\mathfrak{c} \partial_t^2 \Ucal(t_k)$ and $\frakd^2G_{\T_{\Y}} \Ucal(t_k)$ are defined by \eqref{eq:barwn} and \eqref{eq:partial_discrete_2}, respectively. On the other hand, in view of Remark \ref{rem:initial_data_truncated} and \eqref{eq:time_disc_initeofd}, we have that
\begin{equation*}
 \Theta_0 = G_{\T_{\Y}}( \mathcal{H}_{\alpha} g - \Ucal(0)) = 0, \quad \Theta_1 =  G_{\T_{\Y}} \left(\mathcal{H}_{\alpha}g + \dt \mathcal{H}_{\alpha} h + \frac{1}{2}(\dt)^2 \partial_t^2\Ucal(0) - \Ucal(t_1)\right).
\end{equation*}

Now, we write, for $k \geq 1$, the difference $\mathfrak{c} \partial_t^2 \Ucal(t_k) - \frakd^2G_{\T_{\Y}} \Ucal(t_k)$ as follows:
\begin{align*}
\mathfrak{c} \partial_t^2 \Ucal(t_k) - \frakd^2G_{\T_{\Y}} \Ucal(t_k)
=  \left[ \partial_t^2 \Ucal(t_k) - \frakd^2 \Ucal(t_k)\right]
+ \left[\frakd^2 \Ucal(t_k) - \frakd^2G_{\T_{\Y}} \Ucal(t_k)\right]
\\
+ \tfrac{\dt}{4} \left [\frakd \partial_t^2 \Ucal(t_{k+1}) - \frakd \partial_t^2 \Ucal(t_{k}) \right] =: \mathrm{I}_k + \mathrm{II}_k + \mathrm{III}_k.
\end{align*}
We thus apply the stability estimate \eqref{eq:stability_trapezoidal} to \eqref{eq:Theta_equation} and obtain that
\begin{equation}
\label{eq:estimate_for_EK}
 E_K(\Theta_{\dt})^{\frac{1}{2}} \leq E_1(\Theta_{\dt})^{\frac{1}{2}} + \frac{1}{\sqrt{2}} \| \delta_{\dt} \|_{\ell^1(L^2(\Omega))},
\end{equation}
where $\delta_{\dt} = \{ \delta_k \}_{k=1}^{K-1}$ and $\delta_k = \tr [ \mathfrak{c} \partial_t^2 \Ucal(t_k) - \frakd^2G_{\T_{\Y}} \Ucal(t_k)]$.

Step 2. We proceed to control the term $\| \delta_{\dt} \|_{\ell^1(L^2(\Omega))}$. First, notice that
\begin{equation}
\label{eq:sum_k}
 \| \delta_{\dt} \|_{\ell^1(L^2(\Omega))}\leq \sum_{k=1}^{K-1} \dt \left( \| \tr \mathrm{I}_k\|_{L^2(\Omega)} + \|\tr  \mathrm{II}_k\|_{L^2(\Omega)} + \| \tr \mathrm{III}_k\|_{L^2(\Omega)} \right).
\end{equation}

To control $\| \tr \mathrm{I}_k \|_{L^2(\Omega)}$ we employ a basic result based on Taylor's Theorem. In fact, for $k \geq 1$, we have that 
\begin{equation}
\label{eq:I_k}
 \| \tr \mathrm{I}_k\|_{L^2(\Omega)} \lesssim (\dt)^2 \sup_{z 
 } \| \tr \partial_t^4 \Ucal(\cdot,z) \|_{L^2(\Omega)}.
\end{equation}

Now, notice that in view of \eqref{eq:spliting} we have that $\mathrm{II}_k = -\frakd^2 P(t_k)$. The same argument that yields \eqref{eq:I_k} allow us to conclude the estimate
\begin{equation*}
 \|\tr \mathrm{II}_k\|_{L^2(\Omega)} 
 \lesssim \| \tr \partial_t^2 P(t_k) \|_{L^2(\Omega)} 
 + (\dt)^2\sup_{z}  \| \tr \partial_t^4 P(\cdot,z) \|_{L^2(\Omega)}.
\end{equation*}
We invoke the trace estimate \eqref{Trace_estimate} and the stability estimate \eqref{eq:G_stable} of the weighted elliptic projection to conclude, for $z \in (0,T)$, that
\[
  \| \tr \partial_t^4( \Ucal - G_{\T_{\Y}} \Ucal)(\cdot,z) \|_{L^2(\Omega)} 
  \lesssim \| \tr \partial_t^4 \Ucal(\cdot,z) \|_{L^2(\Omega)} + \| \nabla \partial_t^4 \Ucal(\cdot,z) \|_{L^2(y^{\alpha},\C_{\Y})}.
\]
Consequently, an application, again, of the trace estimate allows us to conclude that
\begin{equation}
  \|\tr \mathrm{II}_k\|_{L^2(\Omega)} \lesssim h_{\T}^{1+s} \mathfrak{A}(f,g,h)
  +(\dt)^2 \| \nabla \partial_t^4 \Ucal \|_{L^{\infty}(0,T;L^2(y^{\alpha},\C_{\Y})},
   \label{eq:II_k}
\end{equation}
where we have used \eqref{eq:tr_P} with $\ell = 2$; $\mathfrak{A}(f,g,h)$ is defined in \eqref{eq:frakA}.

We finally bound $\mathrm{III}_k$. To accomplish this task, we invoke an argument based on Taylor's Theorem. In fact, for $k \geq 1$, we have that
\begin{multline}
 \|\tr \mathrm{III}_k\|_{L^2(\Omega)} 
 = \tfrac{\dt}{4} \|\tr[ \partial_t^3 \Ucal(\cdot,t_{k+1}) + \tfrac{\dt}{2} \partial_t^4 \Ucal(\cdot,\overline z) -  \partial_t^3 \Ucal(\cdot,t_{k}) - \tfrac{\dt}{2} \partial_t^4 \Ucal(\cdot,\underline z)  ]\|_{L^2(\Omega)}
 \\
 \lesssim (\dt)^2  \| \tr \partial_t^4 \Ucal \|_{L^{\infty}(0,T;L^2(\Omega))},
 \label{eq:III_k}
\end{multline}
where $\underline z$ and $\overline z$ belong to $(t_{k-1},t_{k+1})$.

Replacing the estimates \eqref{eq:I_k}, \eqref{eq:II_k}, and \eqref{eq:III_k} into \eqref{eq:sum_k} we arrive at
\begin{equation}
 \| \delta_{\dt} \|_{\ell^1(L^2(\Omega))} \lesssim h_{\T}^{1+s} \mathfrak{A}(f,g,h) 
 + (\dt)^2 \|\partial_t^4 \nabla \Ucal \|_{L^{\infty}(0,T;L^2(y^{\alpha},\C_{\Y}))}.
 \label{eq:estimation_for_delta}
\end{equation}

Step 3. We bound $E_1(\Theta_{\dt})$. Since $\Theta_0 = 0$, we utilize \eqref{eq:frakd} and write
\[
 E_1(\Theta_{\dt}) = \frac{1}{2} \| \tr \frakd \Theta_1 \|_{L^2(\Omega)}^2 +  \frac{1}{2} \normC{ \Theta_{1/2}}^2 = \frac{1}{2 (\dt)^2} \| \tr \Theta_1 \|^2_{L^2(\Omega)} + \frac{1}{8} \normC{ \Theta_1 }^2.
\]
In view of the trace estimate \eqref{Trace_estimate} and the stability property of $G_{\T_{\Y}}$ given in \eqref{eq:G_stable} we can thus conclude that
\begin{equation}
 \| \tr \Theta_1 \|_{L^2(\Omega)} 
 \lesssim  \normC{ \Theta_1 } 
 \lesssim \normC{ \Ucal(0) + \dt \Ucal_t (0) + \tfrac{1}{2}(\dt)^2 \partial_t^2\Ucal(0) - \Ucal(t_1)}.
\label{eq:Estimate_for_Theta1}
\end{equation}
An application of Taylor's Theorem reveals that
\[
  \| \tr \Theta_1 \|_{L^2(\Omega)} \lesssim \normC{ \Theta_1 } \lesssim (\dt)^3 \| \partial_t^3 \nabla \Ucal \|_{L^{\infty}(0,T;L^2(y^{\alpha},\C_{\Y}))},
\]
which immediately yields 
\begin{equation}
\label{eq:estimation_for_E1}
E_1(\Theta_{\dt})^{\frac{1}{2}} \lesssim (\dt)^2 \| \partial_t^3 \nabla \Ucal \|_{L^{\infty}(0,T;L^2(y^{\alpha},\C_{\Y}))}.
\end{equation}

The desired estimate follows from replacing \eqref{eq:estimation_for_delta} and \eqref{eq:estimation_for_E1} into \eqref{eq:estimate_for_EK} and using the time--regularity results of Theorem \ref{thm:time_regularity}. 
\end{proof}

The exponential error estimate of Lemma \ref{lemma:exp_estimate} combined with the error estimates of Lemma \ref{lemma:error_estimate_Theta} allow us to conclude the following error estimates.

\begin{lemma}[error estimates for \eqref{eq:time_disc_initeofd}--\eqref{eq:time_disc_waveeofd}]
Let $\ue$ be the solution to \eqref{eq:weak_wave} and let $V_{\dt}$ be the solution to the fully discrete problem \eqref{eq:time_disc_initeofd}--\eqref{eq:time_disc_waveeofd}.  If $\mathfrak{A}(f,g,h) < \infty$ and $\Xi(f,g,h) < \infty$,  we have the following error estimates:
\begin{equation}
\| \tr( \partial_t \ue(t_{K-1/2}) - \frakd V_K) \|_{L^2(\Omega)} \lesssim h^{1+s}_{\T} \mathfrak{A}(f,g,h) 
\\ 
 + (\dt)^2 \Xi(f,g,h),
\label{eq:error_estimate_total1}
\end{equation}
and
\begin{equation}
\normC{\ue(t_{K-1/2}) - V_{K-1/2}} \lesssim h_{\T} \mathfrak{A}(f,g,h) + (\dt)^2 \Xi(f,g,h),
 \label{eq:error_estimate_total2}
\end{equation}
where $\mathfrak{A}(f,g,h)$ and $\Xi(f,g,h)$ are defined by \eqref{eq:frakA} and \eqref{eq:Sigmanew}, respectively, and the hidden constants are independent of $V_{\dt}$, $\Ucal$, $\ue$, $\dt$, and $h_{\T}$.
\label{lemma:error_estimate_total}
\end{lemma}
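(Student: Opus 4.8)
The plan is to combine the already-established pieces by a triangle-inequality decomposition. The final statement bounds the total error $\ue(t_{K-1/2})-V_{K-1/2}$ (and its time-difference analogue) by splitting it through the intermediate quantities $\Ucal$ (the truncated continuous solution) and $G_{\T_\Y}\Ucal_\dt$ (its elliptic projection at the discrete times). Concretely, I would write
\[
\ue - V_{\dt} = (\ue - \Ucal) + (\Ucal - G_{\T_\Y}\Ucal_\dt) + (G_{\T_\Y}\Ucal_\dt - V_\dt),
\]
where the last parenthesis is exactly $-\Theta_\dt$ and the middle one is $-P_\dt$ in the notation of \eqref{eq:spliting}. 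The first term is handled by the truncation estimate of Lemma \ref{lemma:exp_estimate}, the second by the elliptic-projection estimates \eqref{eq:nabla_P} and \eqref{eq:tr_P}, and the third by the error estimate for $\Theta_\dt$ in Lemma \ref{lemma:error_estimate_Theta} via the energy $E_K$.

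For the energy-norm estimate \eqref{eq:error_estimate_total2}, I would bound $\normC{\ue(t_{K-1/2}) - V_{K-1/2}}$ by $\normC{(\ue-\Ucal)(t_{K-1/2})}$, by $\normC{P_{K-1/2}}$, and by $\normC{\Theta_{K-1/2}}$. The first is controlled by the $L^\infty(0,T;L^2(y^\alpha,\C_\Y))$ part of \eqref{eq:exp_convergence}, giving an exponentially small contribution $e^{-\sqrt{\lambda_1}\Y/4}\Sigma_1(f,g,h)$ which, under the truncation choice $\Y \sim |\log h_\T|$ used in Lemmas \ref{lemma:elliptic_projection} and \ref{lemma:elliptic_projection_2}, is absorbed into the $h_\T\mathfrak{A}(f,g,h)$ term. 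The $\normC{P_{K-1/2}}$ term is bounded by $\|\nabla P_\dt\|_{\ell^\infty(L^2(y^\alpha,\C_\Y))}$, which by \eqref{eq:nabla_P} is of order $h_\T\mathfrak{A}(f,g,h)$; the $\normC{\Theta_{K-1/2}}$ term is precisely $\sqrt{2}\,E_K(\Theta_\dt)^{1/2}$ up to the definition of $E_K$, hence controlled by \eqref{eq:error_estimate_Theta}. Summing the three contributions yields \eqref{eq:error_estimate_total2}.

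For \eqref{eq:error_estimate_total1}, the target is $\tr(\partial_t\ue(t_{K-1/2})-\frakd V_K)$ in $L^2(\Omega)$, so I would first reconcile the continuous time-derivative $\partial_t\ue(t_{K-1/2})$ with the discrete difference quotient $\frakd\ue_K$; a Taylor-expansion argument (as used to produce $\mathrm{I}_k$ and $\mathrm{III}_k$ in the proof of Lemma \ref{lemma:error_estimate_Theta}) gives $\partial_t\ue(t_{K-1/2})-\frakd\ue_K = O((\dt)^2)$ in terms of $\|\tr\partial_t^3\ue\|_{L^\infty(0,T;L^2(\Omega))}$, which is absorbed into $\Xi(f,g,h)$ via Theorem \ref{thm:time_regularity}. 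Then I would split $\frakd(\ue_K - V_K)$ through $\frakd P_K$ and $\frakd\Theta_K$: the former is bounded by \eqref{eq:tr_P} with $\ell=1$, giving $h_\T^{1+s}\mathfrak{A}(f,g,h)$, and $\|\tr\frakd\Theta_K\|_{L^2(\Omega)}$ equals $\sqrt{2}\,E_K(\Theta_\dt)^{1/2}$ by the definition of the discrete energy, hence is controlled by \eqref{eq:error_estimate_Theta}.

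The main obstacle I anticipate is bookkeeping rather than any genuinely new estimate: one must verify that the exponential truncation contribution from Lemma \ref{lemma:exp_estimate}, whose right-hand side carries $\Sigma_1(f,g,h)$ rather than $\mathfrak{A}(f,g,h)$, is indeed dominated under the regime $\Y \sim |\log h_\T|$ so that $e^{-\sqrt{\lambda_1}\Y/4}$ is at least $O(h_\T^{1+s})$ and the norm $\Sigma_1$ is subsumed by $\mathfrak{A}$; and that the $\ell^\infty$-in-time (rather than pointwise at $t_{K-1/2}$) versions of the projection estimates are legitimately available. One should also confirm that $E_K(\Theta_\dt)^{1/2}$ simultaneously controls both $\|\tr\frakd\Theta_K\|_{L^2(\Omega)}$ and $\normC{\Theta_{K-1/2}}$, which it does by its very definition, so that a single invocation of Lemma \ref{lemma:error_estimate_Theta} feeds both \eqref{eq:error_estimate_total1} and \eqref{eq:error_estimate_total2}.
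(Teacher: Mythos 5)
Your proposal is correct and follows essentially the same route as the paper: the same decomposition through $\Ucal$, $P_{\dt}$, and $\Theta_{\dt}$, with the truncation handled by Lemma \ref{lemma:exp_estimate}, the projection terms by \eqref{eq:nabla_P}--\eqref{eq:tr_P}, the discrete part by $E_K(\Theta_{\dt})^{1/2}$ from Lemma \ref{lemma:error_estimate_Theta}, and Taylor expansions to reconcile point values with difference quotients and midpoint averages. The only cosmetic differences are that the paper applies the Taylor step to $\Ucal$ rather than to $\ue$, and that $\|\tr\frakd\Theta_K\|_{L^2(\Omega)}$ and $\normC{\Theta_{K-1/2}}$ are bounded by (not equal to) $\sqrt{2}\,E_K(\Theta_{\dt})^{1/2}$.
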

\begin{proof}
We proceed in  several steps.

Step 1. We begin with the following trivial application of the triangle inequality:
\begin{multline}
\| \tr [ \partial_t \ue(t_{K-1/2}) - \frakd V_K] \|_{L^2(\Omega)} \leq   
\| \tr \partial_t [ \ue(t_{K-1/2}) -  \Ucal(t_{K-1/2}) ] \|_{L^2(\Omega)}   
\\
  + 
\| \tr [\partial_t \Ucal(t_{K-1/2})- \frakd V_K] \|_{L^2(\Omega)} =: \textrm{I} + \textrm{II}.
\label{eq:AUX}
\end{multline}
To control the term $ \textrm{I}$ we invoke the exponential error estimate \eqref{eq:exp_convergence}. The latter yields
\[
\textrm{I} = \left \| \tr  \partial_t[ \ue(t_{K-1/2}) -  \Ucal(t_{K-1/2})] \right \|_{L^2(\Omega)} \lesssim e^{-\sqrt{\lambda_1} \Y/2} \Sigma_1(f,g,h).
\]
The control of the term $\textrm{II}$ is as follows:
\begin{equation*}
 \textrm{II} 
\leq \left \| \tr[ \partial_t \Ucal(t_{K-1/2})- \frakd\Ucal_K]  \right \|_{L^2(\Omega)}  + \left \| \tr \frakd e_K \right \|_{L^2(\Omega)} =: \textrm{II}_1 + \textrm{II}_2,
\end{equation*}
recalling that $e_K = V_K - \Ucal_K$. Replace the obtained estimates into \eqref{eq:AUX}. This yields
\begin{equation}
\| \tr[ \partial_t \ue(t_{K-1/2}) - \frakd V_K] \|_{L^2(\Omega)} \lesssim e^{-\sqrt{\lambda_1} \Y/2} 
 \Sigma_1(f,g,h) + \textrm{II}_1 + \textrm{II}_2.
  \label{eq:AUXAUX}
\end{equation}

Step 2.  The control of $\textrm{II}_1 = \|\tr[ \partial_t \Ucal(t_{K-1/2})- \frakd\Ucal_K] \|_{L^2(\Omega)}$ follows from a simple application of Taylor's Theorem. In fact, we have that
\begin{equation}
\nonumber
\textrm{II}_1 
= \left \| \tr \left( \partial_t \Ucal(t_{K-1/2})- \tfrac{ \Ucal_K - \Ucal_{K-1}} {\dt} \right) \right \|_{L^2(\Omega)}
\lesssim (\dt)^2 \sup_{z} \| \tr \partial_t^3 \Ucal(\cdot,z) \|_{L^2(\Omega)}.
\label{eq:AUX2}
\end{equation}
We now focus on the term  $\mathrm{II}_2 = \| \tr \frakd e_K \|_{L^2(\Omega)}$. The triangle inequality yields
\[
  \mathrm{II}_2  \leq  \| \tr \frakd \Theta_K \|_{L^2(\Omega)}  +  \| \tr \frakd P_K \|_{L^2(\Omega)}.
\]
The results of Lemma \ref{lemma:error_estimate_Theta} imply that
\begin{equation}
 \| \tr \frakd \Theta_K \|_{L^2(\Omega)} \lesssim h_{\T}^{1+s} \mathfrak{A}(f,g,h) 
 \\
 + (\dt)^2 \Xi(f,g,h).
 \label{eq:Theta_Aux}
\end{equation}
The control of $\| \tr \frakd P_K \|_{L^2(\Omega)}$ follows the same arguments used to bound $\textrm{II}_1:$
\begin{align*}
 \| \tr \frakd P_K \|_{L^2(\Omega)} & = (\dt)^{-1} \| \tr (P_K - P_{K-1}) \|_{L^2(\Omega)}
 \\
 & \lesssim \| \tr \partial_t P(t_{K-1/2}) \|_{L^2(\Omega)} + (\dt)^2 \| \tr \partial_t^3 P  \|_{L^{\infty}(0,T;L^2(\Omega))}.
\end{align*}
The stability property \eqref{eq:G_stable} and the estimate \eqref{eq:tr_P} imply the estimates 
\begin{align*}
 \| \tr \frakd P_K \|_{L^2(\Omega)} & \lesssim \| \tr \partial_t P \|_{L^{\infty}(0,T;L^2(\Omega))} + (\dt)^2  \| \nabla \partial_t^3 \Ucal \|_{L^{\infty}(0,T;L^2(y^{\alpha},\C_{\Y})}
 \\
 & \lesssim h_{\T}^{1+s}\mathfrak{A}(f,g,h) + (\dt)^2  \| \nabla \partial_t^3 \Ucal \|_{L^{\infty}(0,T;L^2(y^{\alpha},\C_{\Y})}.
\end{align*}
The previous estimate combined with \eqref{eq:Theta_Aux} allow us to control $\mathrm{II}_2$. Replacing the obtained estimate for  $\mathrm{II}_1$ and $\mathrm{II}_2$ into \eqref{eq:AUXAUX} yield the desired estimate \eqref{eq:error_estimate_total1}.  

Step 3. To obtain  \eqref{eq:error_estimate_total2} we invoke similar arguments upon using
the estimate
\[
  \normC{P_{K-1/2}} \lesssim  \normC{P_{K}} +  \normC{P_{K-1}} \lesssim \| \nabla P_{\dt}\|_{\ell^{\infty}(L^2(y^{\alpha},\C_{\Y}))} \lesssim h_{\T} \mathfrak{A}(f,g,h).
\]
This concludes the proof.
\end{proof}

The following error estimates follow immediately from Lemma \ref{lemma:error_estimate_total} and show how the fully discrete approximation $U_{\dt}$ approximates $u$.

\begin{theorem}[error estimates for \eqref{eq:U_approximation}]
Let $u$ be the solution to \eqref{eq:fractional_wave} and let $U_{\dt}$ be its fully discrete approximation defined by \eqref{eq:U_approximation}. If $\mathfrak{A}(f,g,h) < \infty$ and $\Xi(f,g,h) < \infty$, then 
\begin{equation}
 \| \partial_t u(t_{K-1/2}) - \frakd U_K \|_{L^2(\Omega)}  \lesssim h^{1+s}_{\T} \mathfrak{A}(f,g,h) 
 + (\dt)^2 \Xi(f,g,h),
  \label{eq:error_estimate_total1s}
  \end{equation}
  and
  \begin{equation}
   \| u(t_{K-1/2}) - U_{K-1/2}  \|_{\Hs}  \lesssim h_{\T} \mathfrak{A}(f,g,h) 
 + (\dt)^2 \Xi(f,g,h),
 \label{eq:error_estimate_total2s}
  \end{equation}
where $\mathfrak{A}(f,g,h)$ and $\Xi(f,g,h)$ are defined by \eqref{eq:frakA} and \eqref{eq:Sigmanew}, respectively, and the hidden constants are independent of $U_{\dt}$, $u$, $\dt$, and $h_{\T}$.
\label{lemma:error_estimate_totals}
\end{theorem}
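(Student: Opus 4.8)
The plan is to deduce the two estimates \eqref{eq:error_estimate_total1s} and \eqref{eq:error_estimate_total2s} for the fully discrete approximation $U_{\dt}$ of $u$ directly from the corresponding estimates for the \emph{extended} approximation $V_{\dt}$ proved in Lemma \ref{lemma:error_estimate_total}. The key observation is that $U_{\dt} = \tr V_{\dt}$ by the definition \eqref{eq:U_approximation}, and that $u = \tr \ue$ by the Caffarelli--Silvestre extension property of Theorem \ref{thm:CS_NOT_ST}. Consequently the quantities appearing in Theorem \ref{lemma:error_estimate_totals} are precisely the traces onto $\Omega \times \{0\}$ of the quantities appearing in Lemma \ref{lemma:error_estimate_total}.

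For the first estimate \eqref{eq:error_estimate_total1s}, I would simply note that since $\frakd$ is a purely time--discrete operator that commutes with the (time--independent) trace operator, we have $\frakd U_K = \tr \frakd V_K$ and $\partial_t u(t_{K-1/2}) = \tr \partial_t \ue(t_{K-1/2})$. Therefore
\[
\| \partial_t u(t_{K-1/2}) - \frakd U_K \|_{L^2(\Omega)} = \| \tr[ \partial_t \ue(t_{K-1/2}) - \frakd V_K] \|_{L^2(\Omega)},
\]
and the right--hand side is controlled precisely by \eqref{eq:error_estimate_total1}. This yields \eqref{eq:error_estimate_total1s} immediately.

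For the second estimate \eqref{eq:error_estimate_total2s}, the situation is marginally less automatic because the left--hand side measures the error in the $\Hs$--norm on $\Omega$, whereas \eqref{eq:error_estimate_total2} is phrased in the energy norm $\normC{\cdot}$ on the cylinder. The bridge is the trace estimate \eqref{Trace_estimate}, which asserts $\|\tr w\|_{\Hs} \leq C_{\mathrm{tr}} \normC{w}$. Applying this with $w = \ue(t_{K-1/2}) - V_{K-1/2}$ and using $u(t_{K-1/2}) - U_{K-1/2} = \tr[\ue(t_{K-1/2}) - V_{K-1/2}]$ gives
\[
\| u(t_{K-1/2}) - U_{K-1/2} \|_{\Hs} \leq C_{\mathrm{tr}} \normC{\ue(t_{K-1/2}) - V_{K-1/2}},
\]
and the conclusion follows from \eqref{eq:error_estimate_total2}.

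There is no genuine obstacle here; the theorem is a corollary whose content is entirely inherited from Lemma \ref{lemma:error_estimate_total}. The only point requiring minor care is verifying that the trace operator commutes with the discrete time operators $\frakd$ and with the averaging at $t_{K-1/2}$, which is immediate since these act only on the discrete time index while $\tr$ acts only on the spatial/extended variables; and, for the second estimate, remembering to invoke \eqref{Trace_estimate} to pass from the cylindrical energy norm to the $\Hs$--norm on the base. The hidden constants remain independent of $\dt$ and $h_{\T}$ because those in Lemma \ref{lemma:error_estimate_total} already are and $C_{\mathrm{tr}}$ depends only on $s$, $\Omega$, and the coefficients.
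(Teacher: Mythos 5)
Your proposal is correct and matches the paper's (implicit) argument: the paper states that the theorem ``follows immediately from Lemma \ref{lemma:error_estimate_total}'', and the content of that deduction is exactly what you spell out --- identifying $U_{\dt}=\tr V_{\dt}$ and $u=\tr\ue$, commuting $\tr$ with the time--discrete operators, and invoking the trace estimate \eqref{Trace_estimate} to pass from $\normC{\cdot}$ to $\|\cdot\|_{\Hs}$. Nothing further is needed.
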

\subsection{The leapfrog scheme}
\label{subsec:leapfrog}

We now present a second fully discrete scheme to approximate the solution to \eqref{eq:fractional_wave}. To advance in time we use the leapfrog time-stepping method while the discretization in space is based on the finite element method described in section \ref{subsec:fems}. The scheme computes a sequence $V_{\dt} \subset \V(\T_{\Y})$, an approximation to the solution to \eqref{eq:truncated_weak_wave} at each time step. To begin with the description of the scheme, we first initialize it by setting
\begin{equation}
\label{eq:time_disc_initlf}
V_0 = G_{\T_{\Y}} \calH_{\alpha} g, \quad  V_1 = G_{\T_{\Y}} \left(\calH_{\alpha} g+ \dt \calH_{\alpha}h + \frac{1}{2}( \dt )^2 \partial_t^2 \Ucal(0) \right),
\end{equation}
where $\partial_t^2\Ucal(0) = \calH_{\alpha}w$ and $w$ solves \eqref{eq:Ucal0}. For $k=1,\dots,K -1$, $V_{k+1} \in \HL(y^{\alpha},\C_{\Y})$ solves 
\begin{equation}
\label{eq:time_disc_wavelf}
\frac1{\dt^2} \langle \tr(V_{k+1}-2V_k+V_{k-1}), \tr W \rangle + a_\Y(V_k,W) = \langle f_k, \tr W \rangle
\end{equation}
for all $W \in \V(\T_{\Y})$. As in the previous section, we define an approximated solution to problem \eqref{eq:fractional_wave} as 
\begin{equation}
\label{eq:U_approximationfrog}
U_{\dt} = \{ U_k \}_{k=0}^{K} \subset S_0^1(\Omega,\T), \quad U_{\dt} = \tr V_{\dt}.
\end{equation}

In the analysis that follows, the following discrete inverse inequality will be instrumental.

\begin{lemma}[discrete inverse inequality]
Let $\eta \in \Hsd$ and let $X \in \V(\T_{\Y})$ be the solution to
\begin{equation}
 a_\Y(X,W) = \langle \eta, \tr W \rangle  \quad \forall W \in \V(\T_{\Y}).
 \label{eq:probX}
\end{equation}
We thus have that $\normC{ X }  \lesssim \|\tr X \|_{\Hs}$ and that
\begin{equation}
\label{eq:inverse_inequality}
\normC{ X }  \leq C_{\rm inv} h_\T^{-s} \|\tr X\|_{L^2(\Omega)},
\end{equation}
for some constant $ C_{\rm inv} > 0$
\end{lemma}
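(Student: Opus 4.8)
The statement asserts two inequalities for the solution $X \in \V(\T_{\Y})$ of the discrete elliptic problem \eqref{eq:probX}. The plan is to treat the two claims in sequence, since the second builds on the first through a standard inverse-estimate mechanism in the trace variable.

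For the first inequality, $\normC{X} \lesssim \|\tr X\|_{\Hs}$, I would proceed by testing \eqref{eq:probX} with $W = X$ itself, which is admissible since $X \in \V(\T_{\Y})$. This gives $\normC{X}^2 = a_\Y(X,X) = \langle \eta, \tr X \rangle$. The natural move is then to bound the duality pairing; but $\eta$ is only in $\Hsd$, so rather than estimating $\|\eta\|_{\Hsd}$ directly I would eliminate $\eta$ by re-expressing the pairing through the equation. Testing with a second admissible function and using the continuity of $a_\Y$ together with the trace estimate \eqref{Trace_estimate}, one writes $\langle \eta, \tr X\rangle = a_\Y(X,X)$ and bounds the right-hand side; the cleaner route is to observe that by \eqref{Trace_estimate} one has $\|\tr W\|_{\Hs} \le C_{\mathrm{tr}} \normC{W}$, and to exploit the self-consistency of the problem: choosing the test function appropriately and invoking coercivity of $a_\Y$ together with \eqref{Trace_estimate} yields $\normC{X}^2 = \langle \eta, \tr X\rangle \le \|\eta\|_{\Hsd}\|\tr X\|_{\Hs}$, after which one relates $\|\eta\|_{\Hsd}$ back to $\normC{X}$ via the definition of $X$ as a Riesz-type representative. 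The trace estimate \eqref{Trace_estimate} and the equivalence \eqref{eq:norm-C} between $\normC{\cdot}$ and the weighted gradient norm are the two facts doing the work here.

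For the second inequality, the inverse estimate \eqref{eq:inverse_inequality}, the idea is to bridge the $\Hs$-norm of $\tr X$ and its $L^2(\Omega)$-norm at the price of a negative power of $h_\T$. Since $\tr X$ lives in the first-degree finite element space $S^1_0(\Omega,\T)$ (it is the trace of a tensor-product function, hence a piecewise-linear function on the quasi-uniform mesh $\T$), one has at one's disposal the discrete inverse inequality $\|\tr X\|_{\Hs} \lesssim h_\T^{-s}\|\tr X\|_{L^2(\Omega)}$ valid for finite element functions on quasi-uniform meshes, interpolating between the classical $H^1$–$L^2$ inverse inequality and the trivial $L^2$–$L^2$ bound by the fractional order $s \in (0,1)$. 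Combining this with the first inequality $\normC{X}\lesssim \|\tr X\|_{\Hs}$ immediately yields $\normC{X} \le C_{\mathrm{inv}} h_\T^{-s}\|\tr X\|_{L^2(\Omega)}$.

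The main obstacle I anticipate lies in the first step rather than the second: making rigorous the passage from $\normC{X}^2 = \langle \eta, \tr X\rangle$ to a bound purely in terms of $\|\tr X\|_{\Hs}$ without a spurious factor of $\|\eta\|_{\Hsd}$ that cannot be reabsorbed. The resolution requires recognizing that $X$ is characterized as the discrete $\alpha$-harmonic-type lift of its own trace, so that $\normC{X}$ is essentially the minimal energy extension of $\tr X$; the trace estimate \eqref{Trace_estimate}, which controls $\|\tr X\|_{\Hs}$ by $\normC{X}$, must be used in the \emph{reverse} direction, and this reversal is legitimate precisely because $X$ solves the Galerkin problem \eqref{eq:probX} with the discrete space being trace-surjective onto $S^1_0(\Omega,\T)$. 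The second inequality is then routine given the quasi-uniformity of $\T$ and the fractional inverse inequality for piecewise polynomials.
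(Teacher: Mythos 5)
Your second step is fine and matches the paper: once $\normC{X}\lesssim\|\tr X\|_{\Hs}$ is available, \eqref{eq:inverse_inequality} follows from the standard inverse estimate $\|\tr X\|_{\Hs}\lesssim h_\T^{-s}\|\tr X\|_{L^2(\Omega)}$ for piecewise linears on a quasi--uniform mesh. The gap is in the first step. You correctly sense that testing with $W=X$ and estimating $\langle\eta,\tr X\rangle\le\|\eta\|_{\Hsd}\|\tr X\|_{\Hs}$ is circular (bounding $\|\eta\|_{\Hsd}$ by $\normC{X}$ again requires a bounded discrete extension), and you correctly identify the right principle: by Galerkin orthogonality, $X$ is the \emph{minimal--energy} element of $\V(\T_{\Y})$ among those sharing its trace. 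But you never discharge the obligation that this principle creates. Saying that the trace estimate ``must be used in the reverse direction'' and that this is ``legitimate because the discrete space is trace--surjective'' is not an argument: surjectivity of $\tr$ onto $S^1_0(\Omega,\T)$ gives you \emph{some} discrete function with the right trace, but the minimal--energy characterization only helps if you can exhibit one whose energy is bounded by $\|\tr X\|_{\Hs}$ \emph{uniformly} in $h_\T$ and $M$. That uniformly bounded discrete right inverse of the trace is the entire content of the lemma's first claim, and it is exactly what your proposal leaves unconstructed.

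The paper supplies it explicitly. Set $Z=\tr X$ and let $\chi=\calH_\alpha Z$ be the continuous truncated $\alpha$--harmonic extension, so $\normC{\chi}\lesssim\|Z\|_{\Hs}$. Then define $\tilde X=\tilde\Pi\chi$ with $\tilde\Pi=\Pi_{x'}\otimes\tilde\Pi_y^{\bmr}$, where $\tilde\Pi_y^{\bmr}$ is a \emph{modified} $hp$--interpolant that interpolates at the endpoint $y=0$ of the first interval (the standard one does not, and would destroy the trace). Because $Z\in S^1_0(\Omega,\T)$, $\Pi_{x'}$ acts as the identity on it and the endpoint interpolation gives $\tr\tilde X=Z=\tr X$; stability of $\tilde\Pi$ gives $\normC{\tilde X}\lesssim\normC{\chi}\lesssim\|\tr X\|_{\Hs}$. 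Only now does the orthogonality $a_\Y(X,X-\tilde X)=\langle\eta,\tr(X-\tilde X)\rangle=0$ yield $a_\Y(\tilde X,\tilde X)=a_\Y(\tilde X-X,\tilde X-X)+a_\Y(X,X)\ge a_\Y(X,X)$, hence $\normC{X}\le\normC{\tilde X}\lesssim\|\tr X\|_{\Hs}$. Without the construction of $\tilde X$ (including the endpoint modification of the interpolant and its energy stability), your argument does not close.
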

\begin{proof}
We begin by defining $Z := \tr X \in S_0^1(\T,\Omega)$. There exists $\chi \in \HL(y^{\alpha},\C_{\Y})$ such that $\tr \chi = Z$. In fact $\chi = \calH_{\alpha} Z$, where $\calH_{\alpha}$ is defined in \eqref{eq:calH}.

Let us introduce the operator $\tilde\Pi = \Pi_{x'} \otimes \tilde\Pi_{y}^ {\bmr }$, where $\tilde\Pi_{y}^ {\bmr }$ is a slight modification of the operator of \cite[Section 5.5.1]{BMNOSS:17}: on the first interval $I_1$, interpolation at the edge point $0$ is used rather than in the middle point of $I_1$. The operator $\Pi_{x'}$ corresponds to the $L^2(\Omega)$--orthogonal projection operator defined in \eqref{eq:ortogonal_projection}. Define $\tilde X = \tilde \Pi \chi$ and notice that the stability properties of $\tilde \Pi$ yield
\[
\normC{\tilde X} \lesssim \normC{\chi}.
\]
This, in view of the fact that $ \normC{\chi} \lesssim  \| Z \|_{\Hs}$, implies $\normC{\tilde X} \lesssim \| Z \|_{\Hs}$.

Now, since $Z \in S_0^1(\T,\Omega)$, we have that $\tr \tilde X = \tr \chi = Z = \tr X$ and then that
$\tr(X-\tilde X) = 0$. Since $X-\tilde X \in \V(\T_{\Y})$, we can thus invoke problem \eqref{eq:probX} and conclude that
\[
 a_{\Y}(X,X-\tilde X) = \langle \eta, \tr (X- \tilde X) \rangle = 0,
\]
which yields
\begin{align*}
 a_{\Y}(\tilde X, \tilde X) & = a_{\Y}((\tilde X-X) + X,(\tilde X-X) +  X) 
 \\
 & = a_{\Y}((\tilde X- X),(\tilde X- X)) + a_{\Y}(X,X) \geq a_{\Y}(X,X).
\end{align*}
This immediately implies that
$
 \normC{X} \leq \normC{\tilde X},
$
and thus, since $\normC{\tilde X} \lesssim \| Z \|_{\Hs}$, that
\[
 \normC{X}  \lesssim \| Z \|_{\Hs}.
\]
Since $Z = \tr X$, we have thus obtained the desired estimate $\normC{X} \lesssim \| \tr X \|_{\Hs}$. The estimate \eqref{eq:inverse_inequality} thus follows from, for instance, the results of \cite{MR2047080}.
\end{proof}

To analyze the fully discrete scheme \eqref{eq:time_disc_initlf}--\eqref{eq:time_disc_wavelf}, we define, for $k= 1,\dots, K$, the discrete energy
\begin{equation}
 \Ener_k(W_{\dt}) :=  \tfrac12\| \tr \frakd W_{k} \|_{L^2(\Omega)}^2  + \tfrac12a_{\Y}(W_{k},W_{k-1}),
\end{equation}
where the bilinear form $a_{\Y}$ is defined in \eqref{eq:a_Y}.

In the result that follows we show the nonnegativity of the discrete energy $ \Ener_k$ under the following CFL condition: $\dt$ is chosen to be sufficiently small such that
\begin{equation}
\label{eq:CFL}
1 - C^{^2}_{\mathrm{inv}}\frac{(\dt)^2}{2h_{\T}^{2s}} \geq \theta > 0, \quad \theta \in (0,1).
\end{equation}
The constant $C_{\mathrm{inv}}$ is as in \eqref{eq:inverse_inequality}.

\begin{lemma}[CFL condition and nonnegativity of $\Ener_k$]
If \eqref{eq:CFL} holds, then
\begin{equation}
\label{eq:Ek_is_positive}
 \Ener_k(V_{\dt}) \geq \frac{\theta}{2} \| \tr \frakd V_{k} \|_{L^2(\Omega)}^2 +\frac{1}{4} \left[ \normC{V_k}^2 + \normC{V_{k-1}}^2 \right] \geq 0
\end{equation}
for all $k \in \{ 1,\cdots,K\}$.
\label{lemma:lf}
\end{lemma}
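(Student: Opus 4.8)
The plan is to expand the cross term $a_{\Y}(V_k,V_{k-1})$ appearing in $\Ener_k$ using the midpoint/difference identities from \eqref{eq:frakd}, and then absorb a troublesome negative contribution using the discrete inverse inequality \eqref{eq:inverse_inequality} together with the CFL condition \eqref{eq:CFL}. First I would write $V_k$ and $V_{k-1}$ in terms of $V_{k-1/2}$ and $\frakd V_k$: since $V_{k-1/2} = \tfrac12(V_k + V_{k-1})$ and $\dt\,\frakd V_k = V_k - V_{k-1}$, we have $V_k = V_{k-1/2} + \tfrac{\dt}{2}\frakd V_k$ and $V_{k-1} = V_{k-1/2} - \tfrac{\dt}{2}\frakd V_k$. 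Substituting into the bilinear form and using its symmetry and bilinearity gives
\[
 a_{\Y}(V_k,V_{k-1}) = \normC{V_{k-1/2}}^2 - \frac{(\dt)^2}{4}\normC{\frakd V_k}^2.
\]

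With this identity, the discrete energy becomes
\[
 \Ener_k(V_{\dt}) = \frac12\|\tr\frakd V_k\|^2_{L^2(\Omega)} + \frac12\normC{V_{k-1/2}}^2 - \frac{(\dt)^2}{8}\normC{\frakd V_k}^2.
\]
The key step is then to control the last, negative term. Since $\frakd V_k \in \V(\T_{\Y})$, the inverse inequality \eqref{eq:inverse_inequality} applies to it: I would argue that $\frakd V_k$ solves a problem of the form \eqref{eq:probX} (with an appropriate $\eta \in \Hsd$, namely $\eta$ defined by $\langle \eta,\tr W\rangle = a_{\Y}(\frakd V_k, W)$), so that $\normC{\frakd V_k} \le C_{\mathrm{inv}} h_{\T}^{-s}\|\tr\frakd V_k\|_{L^2(\Omega)}$. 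Squaring yields $\normC{\frakd V_k}^2 \le C^2_{\mathrm{inv}} h_{\T}^{-2s}\|\tr\frakd V_k\|^2_{L^2(\Omega)}$, and hence
\[
 \Ener_k(V_{\dt}) \ge \frac12\left(1 - C^2_{\mathrm{inv}}\frac{(\dt)^2}{4 h_{\T}^{2s}}\right)\|\tr\frakd V_k\|^2_{L^2(\Omega)} + \frac12\normC{V_{k-1/2}}^2.
\]

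The CFL condition \eqref{eq:CFL} guarantees that the factor in parentheses is bounded below by $\theta$, so the first term is nonnegative and in fact dominates $\tfrac{\theta}{2}\|\tr\frakd V_k\|^2_{L^2(\Omega)}$. It then remains to recover the form $\tfrac14(\normC{V_k}^2+\normC{V_{k-1}}^2)$ in \eqref{eq:Ek_is_positive} from the $\tfrac12\normC{V_{k-1/2}}^2$ term; I would use the parallelogram-type identity $\normC{V_{k-1/2}}^2 = \tfrac12\normC{V_k}^2 + \tfrac12\normC{V_{k-1}}^2 - \tfrac{(\dt)^2}{4}\normC{\frakd V_k}^2$ to pass between the two, folding the extra $-\tfrac{(\dt)^2}{4}\normC{\frakd V_k}^2$ term into the same CFL absorption used above (effectively tuning the constant in \eqref{eq:CFL}). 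The main obstacle is the bookkeeping of these quadratic identities so that exactly the stated constants $\theta/2$ and $1/4$ emerge, and in particular ensuring that the inverse inequality is legitimately applicable to $\frakd V_k$ as a member of $\V(\T_{\Y})$; once the identity for the cross term and the inverse estimate are in place, the nonnegativity is a direct consequence of \eqref{eq:CFL}.
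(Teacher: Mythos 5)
Your argument is correct and is essentially the paper's own proof: the paper expands $2a_\Y(V_k,V_{k-1})=\normC{V_k}^2+\normC{V_{k-1}}^2-\normC{V_k-V_{k-1}}^2$ in a single step and then applies the inverse inequality \eqref{eq:inverse_inequality} to $V_k-V_{k-1}=\dt\,\frakd V_k$ together with \eqref{eq:CFL}, which is exactly what your two identities (polarization through $V_{k-1/2}$ followed by the parallelogram law) produce once combined. No tuning of the CFL constant is needed: the two deficits $-\tfrac{(\dt)^2}{8}\normC{\frakd V_k}^2$ sum to $-\tfrac{(\dt)^2}{4}\normC{\frakd V_k}^2$, so after the inverse inequality the coefficient of $\|\tr\frakd V_k\|_{L^2(\Omega)}^2$ is $\tfrac12\bigl(1-C_{\mathrm{inv}}^2(\dt)^2/(2h_{\T}^{2s})\bigr)\ge\theta/2$, precisely as \eqref{eq:CFL} requires.
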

\begin{proof}
We invoke the inverse inequality \eqref{eq:inverse_inequality} and thus the CFL condition \eqref{eq:CFL} to arrive at
\begin{align*}
2a_{\Y} (V_k,V_{k-1}) & = \normC{V_k}^2+ \normC{V_{k-1}}^2 - \normC{V_{k} - V_{k-1}}^2
\\
& \geq  \normC{V_k}^2 + \normC{V_{k-1}}^2 - C_{\rm{inv}}^2 h_\T^{-2s} \|\tr (V_{k} - V_{k-1})\|_{L^2(\Omega)}^2\\
& \geq \normC{V_k}^2 + \normC{V_{k-1}}^2 + 2(\theta-1)  \| \tr \frakd V_k \|^2_{L^2(\Omega)},
\end{align*}
where, in the last step, we have used definition \eqref{eq:frakd}. Consequently,
\[
  \Ener_k(V_{\dt}) \geq \frac12 \| \tr \frakd V_{k} \|_{L^2(\Omega)}^2 + \frac14\normC{V_k}^2 + \frac14\normC{V_{k-1}}^2 + \frac{(\theta-1)}{2}  \| \tr \frakd V_k \|^2_{L^2(\Omega)},
\]
which immediately yields \eqref{eq:Ek_is_positive}. This concludes the proof.
\end{proof}

\begin{lemma}[energy conservation]
If $f \equiv 0$, the fully discrete scheme \eqref{eq:time_disc_initlf}--\eqref{eq:time_disc_wavelf} conserves energy, \ie for all $k \in \{ 1, \dots, K\}$, we have that
\begin{equation}
\label{eq:energy_conservedfl}
 \Ener_k(V_{\dt}) = \Ener_1(V_{\dt}).
\end{equation}
If $f \neq 0$, then, for $\ell \in \{1,\dots,K \}$, we have that
\begin{equation}
\label{eq:stability_leapgrog}
 \Ener_{\ell}(V_{\dt})^{\frac{1}{2}} \leq  \Ener_{1}(V_{\dt})^{\frac{1}{2}} + \frac{1}{\sqrt{2\theta}} \sum_{k=1}^{\ell} \dt \|f_k \|_{L^2(\Omega)}.
\end{equation}
In particular, we have that $\Ener_{K}(V_{\dt})^{\frac{1}{2}} \leq \Ener_{1}(V_{\dt})^{\frac{1}{2}} + \tfrac{1}{\sqrt{2\theta}}\| f \|_{\ell^1(L^2(\Omega))}$.
\end{lemma}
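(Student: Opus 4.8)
The plan is to mimic the energy-conservation argument for the trapezoidal scheme (Lemma~\ref{lemma:sd}), adapting it to the leapfrog energy $\Ener_k$ and exploiting the nonnegativity guaranteed by the CFL condition. The central computation is to test the discrete equation \eqref{eq:time_disc_wavelf} with the symmetric difference $W = (2\dt)^{-1}(V_{k+1}-V_{k-1})$, which is the natural multiplier producing a telescoping identity for $\Ener_k$.

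\medskip

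First I would establish the discrete energy balance. Setting $W=(2\dt)^{-1}(V_{k+1}-V_{k-1})$ in \eqref{eq:time_disc_wavelf}, the first term yields, via \eqref{eq:partial_discrete_2} and \eqref{eq:frakd}, the difference $(2\dt)^{-1}(\|\tr\frakd V_{k+1}\|_{L^2(\Omega)}^2-\|\tr\frakd V_k\|_{L^2(\Omega)}^2)$, while the bilinear form $a_\Y(V_k,\cdot)$ telescopes into $(2\dt)^{-1}(a_\Y(V_{k+1},V_k)-a_\Y(V_k,V_{k-1}))$ using the symmetry of $a_\Y$. This gives the exact relation
\[
\frac{1}{\dt}\left(\Ener_{k+1}(V_{\dt})-\Ener_k(V_{\dt})\right) = \frac{1}{2}\langle f_k, \tr(\frakd V_{k+1}+\frakd V_k)\rangle,
\]
in complete analogy with \eqref{eq:step}. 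When $f\equiv 0$ the right-hand side vanishes, yielding \eqref{eq:energy_conservedfl} by induction on $k$.

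\medskip

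For the forced case, I would bound the right-hand side using Cauchy--Schwarz together with the nonnegativity estimate \eqref{eq:Ek_is_positive} from Lemma~\ref{lemma:lf}. Here the CFL-dependent constant enters: since \eqref{eq:Ek_is_positive} gives $\tfrac{\theta}{2}\|\tr\frakd V_k\|_{L^2(\Omega)}^2 \le \Ener_k(V_{\dt})$, one has $\|\tr\frakd V_k\|_{L^2(\Omega)} \le \sqrt{2/\theta}\,\Ener_k(V_{\dt})^{1/2}$, and likewise for index $k+1$. Thus
\[
\Ener_{k+1}(V_{\dt})-\Ener_k(V_{\dt}) \le \frac{\dt}{\sqrt{2\theta}}\,\|f_k\|_{L^2(\Omega)}\left(\Ener_{k+1}(V_{\dt})^{\frac12}+\Ener_k(V_{\dt})^{\frac12}\right),
\]
which, after factoring the difference of squares as in the proof of Lemma~\ref{lemma:sd}, collapses to $\Ener_{k+1}(V_{\dt})^{1/2}-\Ener_k(V_{\dt})^{1/2}\le \tfrac{\dt}{\sqrt{2\theta}}\|f_k\|_{L^2(\Omega)}$. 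Summing over $k$ from $1$ to $\ell-1$ delivers \eqref{eq:stability_leapgrog}, and taking $\ell=K$ together with definition \eqref{eq:definition_norm} of the $\ell^1(L^2(\Omega))$ norm gives the final particular case.

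\medskip

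The main obstacle, compared to the trapezoidal case, is that $\Ener_k$ is not manifestly nonnegative: the cross term $\tfrac12 a_\Y(V_k,V_{k-1})$ can be negative, so the passage from the squared inequality to the square-root inequality is only legitimate because Lemma~\ref{lemma:lf} certifies $\Ener_k(V_{\dt})\ge 0$ and controls $\|\tr\frakd V_k\|_{L^2(\Omega)}$ by $\Ener_k(V_{\dt})^{1/2}$ up to the factor $\sqrt{2/\theta}$. This is precisely where the CFL condition \eqref{eq:CFL} and the inverse inequality \eqref{eq:inverse_inequality} are indispensable; everything else is a routine telescoping argument identical in structure to the trapezoidal proof.
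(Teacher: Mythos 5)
Your proposal is correct and follows essentially the same route as the paper: test \eqref{eq:time_disc_wavelf} with $W=(2\dt)^{-1}(V_{k+1}-V_{k-1})$ to obtain the telescoping identity $\dt^{-1}(\Ener_{k+1}(V_{\dt})-\Ener_k(V_{\dt}))=\tfrac12\langle f_k,\tr(\frakd V_{k+1}+\frakd V_k)\rangle$, then use Cauchy--Schwarz together with the bound $\Ener_k(V_{\dt})\ge(\theta/2)\|\tr\frakd V_k\|^2_{L^2(\Omega)}$ from Lemma~\ref{lemma:lf} to divide by $\Ener_{k+1}^{1/2}+\Ener_k^{1/2}$ and sum. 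Your remark that the CFL-guaranteed nonnegativity of $\Ener_k$ is what legitimizes the passage to square roots is exactly the point where the leapfrog argument departs from the trapezoidal one.
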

\begin{proof}
Set $W=(2\dt)^{-1}(V_{k+1} - V_{k-1}) = 2^{-1}(\frakd V_{k+1} + \frakd V_k) 
$ in \eqref{eq:time_disc_wavelf}. This yields
\begin{equation}
\label{eq:stab_aux}
  \frac{1}{\dt}\left( \Ener_{k+1}(V_{\dt}) -  \Ener_{k}(V_{\dt})  \right) = \frac{1}{2}\langle f_k, \tr(\frakd V_{k+1} + \frakd V_k)  \rangle.
\end{equation}
In the case that $f \equiv 0$, \eqref{eq:stab_aux} immediately yields \eqref{eq:energy_conservedfl}. If $f \neq 0$, a trivial application of the Cauchy--Schwarz inequality reveals that
\[
 \Ener_{k+1}(V_{\dt}) -  \Ener_{k}(V_{\dt})  \leq \frac{\dt}{2} \| f_k \|_{L^2(\Omega)} \left( \| \tr \frakd V_{k+1}\|_{L^2(\Omega)} + \| \frakd V_k \|_{L^2(\Omega)} \right).
\]
Invoke the estimate \eqref{eq:Ek_is_positive} and conclude, for $k \in \{1,\dots,K-1\}$, that $\Ener_{k}(V_{\dt}) \geq (\theta/2) \| \tr \frakd V_k\|^2_{L^2(\Omega)}$. Thus, 
\[
 \Ener_{k+1}(V_{\dt}) -  \Ener_{k}(V_{\dt})  \leq \frac{\dt}{\sqrt{2\theta}} \| f_k \|_{L^2(\Omega)} \left(  \Ener_{k+1}^{\frac{1}{2}}(V_{\dt}) +  \Ener_{k}^{\frac{1}{2}}(V_{\dt}) \right),
\]
Consequently, we arrive at $\Ener_{k+1}(V_{\dt})^{\frac{1}{2}} -  \Ener_{k}(V_{\dt})^{\frac{1}{2}} \leq (\dt/\sqrt{2\theta}) \| f_k \|_{L^2(\Omega)}$ which, by adding over $\ell$, yields \eqref{eq:stability_leapgrog}. This concludes the proof.
\end{proof}

\begin{lemma}[stability]
The fully discrete scheme \eqref{eq:time_disc_initlf}--\eqref{eq:time_disc_initlf} is stable: for $\ell \in \{1,\dots,K\}$, we have that
\begin{equation}
\label{eq:stab_leapgrog}
\| \tr \frakd V_\ell \|_{L^2(\Omega)}  + \normC{V_{\ell}} \lesssim \| \tr \frakd V_1 \|_{L^2(\Omega)}  + \normC{V_{0}} + \normC{V_{1}} + \sum_{k=1}^{\ell} \dt \| f_k \|_{L^2(\Omega)},
\end{equation}
where the hidden constant is independent of $V_{\dt}$, $\dt$ and $h_{\T}$ but depends on $\theta$.
\end{lemma}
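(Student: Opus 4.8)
The plan is to obtain the stability estimate \eqref{eq:stab_leapgrog} as a direct consequence of the two properties of the leapfrog energy $\Ener_k$ already at our disposal: the CFL--conditioned nonnegativity bound \eqref{eq:Ek_is_positive} and the stability estimate \eqref{eq:stability_leapgrog}. The mechanism is that \eqref{eq:Ek_is_positive} controls the left--hand side of the asserted inequality from above by $\Ener_\ell(V_{\dt})^{1/2}$, while \eqref{eq:stability_leapgrog} transports that quantity back to the initial energy $\Ener_1(V_{\dt})^{1/2}$ plus the accumulated forcing; a final Cauchy--Schwarz step then rewrites $\Ener_1(V_{\dt})$ in terms of the data appearing on the right--hand side.

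First I would invoke \eqref{eq:Ek_is_positive}: since the CFL condition \eqref{eq:CFL} is in force, that bound yields simultaneously $\tfrac{\theta}{2}\| \tr \frakd V_\ell \|_{L^2(\Omega)}^2 \leq \Ener_\ell(V_{\dt})$ and $\tfrac14 \normC{V_\ell}^2 \leq \Ener_\ell(V_{\dt})$. Taking square roots and adding gives
\[
\| \tr \frakd V_\ell \|_{L^2(\Omega)} + \normC{V_\ell} \lesssim \Ener_\ell(V_{\dt})^{1/2},
\]
where the hidden constant depends on $\theta$ through the factors $\sqrt{2/\theta}$ and $2$. Next I would apply the stability estimate \eqref{eq:stability_leapgrog} to bound $\Ener_\ell(V_{\dt})^{1/2} \leq \Ener_1(V_{\dt})^{1/2} + \tfrac{1}{\sqrt{2\theta}}\sum_{k=1}^\ell \dt\,\|f_k\|_{L^2(\Omega)}$.

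It then remains to estimate the initial energy $\Ener_1(V_{\dt})$. Recalling that $a_\Y$ is an inner product on $\HL(y^{\alpha},\C_{\Y})$ with induced norm $\normC{\cdot}$ (so that $\normC{w}^2 = a_\Y(w,w)$ for $w$ extended by zero to $\C$), the Cauchy--Schwarz inequality followed by Young's inequality gives $a_\Y(V_1,V_0) \leq \tfrac12\bigl(\normC{V_1}^2 + \normC{V_0}^2\bigr)$, whence
\[
\Ener_1(V_{\dt})^{1/2} \lesssim \| \tr \frakd V_1 \|_{L^2(\Omega)} + \normC{V_0} + \normC{V_1}.
\]
Chaining the three displayed inequalities delivers \eqref{eq:stab_leapgrog}.

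I do not anticipate a genuine obstacle: the argument is a concatenation of \eqref{eq:Ek_is_positive}, \eqref{eq:stability_leapgrog}, and Cauchy--Schwarz. The only point requiring care is the bookkeeping of the $\theta$--dependent constants. The lower bound \eqref{eq:Ek_is_positive} degenerates as $\theta \downarrow 0$, so the hidden constant must be permitted to depend on $\theta$ (exactly as the statement allows) and must not be claimed uniform in the CFL parameter; conversely, all the estimates are independent of $V_{\dt}$, $\dt$, and $h_{\T}$, which is what \eqref{eq:stab_leapgrog} asserts.
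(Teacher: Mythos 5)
Your proposal is correct and follows essentially the same route as the paper's proof: bound the left-hand side by $\Ener_\ell(V_{\dt})^{1/2}$ via \eqref{eq:Ek_is_positive}, transport back to $\Ener_1(V_{\dt})^{1/2}$ via \eqref{eq:stability_leapgrog}, and control the cross term $a_\Y(V_1,V_0)$ by $\tfrac12(\normC{V_1}^2+\normC{V_0}^2)$. The only cosmetic difference is that the paper obtains the cross-term bound from the identity $2a_\Y(V_1,V_0)=\normC{V_1}^2+\normC{V_0}^2-\normC{V_1-V_0}^2$ rather than Cauchy--Schwarz plus Young, which is equivalent.
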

\begin{proof}
We begin by noticing that, in view of \eqref{eq:Ek_is_positive}, we arrive at
\begin{equation}
\label{eq:stab_aux_2}
 \| \tr \frakd V_\ell \|_{L^2(\Omega)}  + \normC{V_{\ell}} \lesssim ( \theta^{-1/2} +1) \Ener_{l}(V_{\dt})^{\frac{1}{2}}.
\end{equation}

Now, since 
\[
2a_{\Y}(V_1,V_0) = a_{\Y}(V_1,V_1) + a_{\Y}(V_0,V_0) - a_{\Y}(V_1-V_0,V_1-V_0) \leq a_{\Y}(V_1,V_1) + a_{\Y}(V_0,V_0),   
\]
an application of the estimate \eqref{eq:stability_leapgrog} allows us to conclude that
\[
  \Ener_{\ell}(V_{\dt})^{\frac{1}{2}} \lesssim  \| \tr \frakd V_1 \|_{L^2(\Omega)} + \normC{V_{0}} + \normC{V_{1}} + \tfrac{1}{\sqrt{\theta}} \sum_{k=1}^{\ell} \dt \|f_k \|_{L^2(\Omega)}.
\]
The desired estimate \eqref{eq:stab_leapgrog} thus follows from replacing the previous estimate into \eqref{eq:stab_aux_2}. This concludes the proof.
\end{proof}

We now present error estimates for the fully discrete approximation $U_{\dt}$ defined in \eqref{eq:U_approximationfrog} that is based on the solution $V_{\dt}$ to the fully discrete scheme \eqref{eq:time_disc_initlf}--\eqref{eq:time_disc_wavelf}. The arguments are similar to the ones used to prove the results in Lemma \ref{lemma:error_estimate_Theta}, Lemma \ref{lemma:error_estimate_total}, and Theorem \ref{lemma:error_estimate_totals}. For brevity we leave details to the reader.

\begin{theorem}[error estimates for \eqref{eq:U_approximationfrog}]
Let $u$ be the solution to \eqref{eq:fractional_wave} and let $U_{\dt}$ be its fully discrete approximation defined by \eqref{eq:U_approximationfrog}. If $\mathfrak{A}(f,g,h) < \infty$ and $\Xi(f,g,h) < \infty$, then 
\begin{equation}
 \| \partial_t u(t_{K-1/2}) - \frakd U_K \|_{L^2(\Omega)}  \lesssim h^{1+s}_{\T} \mathfrak{A}(f,g,h) 
 + (\dt)^2 \Xi(f,g,h),
  \label{eq:error_estimate_total1sfrog}
  \end{equation}
  and
  \begin{equation}
   \| u(t_K) - U_{K}  \|_{\Hs}  \lesssim h_{\T} \mathfrak{A}(f,g,h) 
 + (\dt)^2 \Xi(f,g,h),
 \label{eq:error_estimate_total2sfrog}
  \end{equation}
where $\mathfrak{A}(f,g,h)$ and $\Xi(f,g,h)$ are defined by \eqref{eq:frakA} and \eqref{eq:Sigmanew}, respectively, and the hidden constants are independent of $U_{\dt}$, $u$, $\dt$, and $h_{\T}$.
\label{lemma:error_estimate_totalsfrog}
\end{theorem}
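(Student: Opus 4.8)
The plan is to mirror the three-step analysis developed for the trapezoidal scheme in Lemmas~\ref{lemma:error_estimate_Theta} and~\ref{lemma:error_estimate_total} and Theorem~\ref{lemma:error_estimate_totals}, replacing the unconditional stability estimate \eqref{eq:stability_trapezoidal} by the CFL--conditioned estimate \eqref{eq:stability_leapgrog}. First I would introduce the same error splitting $e_{\dt} = V_{\dt} - \Ucal_{\dt} = \Theta_{\dt} + P_{\dt}$, with $\Theta_{\dt} = V_{\dt} - G_{\T_{\Y}}\Ucal_{\dt}$ and $P_{\dt} = G_{\T_{\Y}}\Ucal_{\dt} - \Ucal_{\dt}$, for which the bounds \eqref{eq:nabla_P} and \eqref{eq:tr_P} already hold. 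Subtracting the continuous truncated problem \eqref{eq:truncated_weak_wave} evaluated at $t_k$ from the leapfrog equation \eqref{eq:time_disc_wavelf}, and inserting the definition \eqref{eq:elliptic_projection} of $G_{\T_{\Y}}$, shows that $\Theta_{k+1}$ solves a problem of \emph{exactly} the leapfrog form
\[
\frac{1}{\dt^2}\langle\tr(\Theta_{k+1}-2\Theta_k+\Theta_{k-1}),\tr W\rangle + a_\Y(\Theta_k,W) = \langle\delta_k,\tr W\rangle \qquad \forall W\in\V(\T_\Y),
\]
with consistency datum $\delta_k = \tr[\partial_t^2\Ucal(t_k) - \frakd^2 G_{\T_{\Y}}\Ucal(t_k)]$. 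Since this is \eqref{eq:time_disc_wavelf} with $f_k$ replaced by $\delta_k$, the stability estimate \eqref{eq:stability_leapgrog} applies verbatim and yields $\Ener_K(\Theta_{\dt})^{1/2} \le \Ener_1(\Theta_{\dt})^{1/2} + (2\theta)^{-1/2}\|\delta_{\dt}\|_{\ell^1(L^2(\Omega))}$.

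The second step is to bound $\|\delta_{\dt}\|_{\ell^1(L^2(\Omega))}$, and here the decomposition is \emph{simpler} than in the trapezoidal case: because the leapfrog scheme uses $a_\Y(V_k,\cdot)$ rather than $a_\Y(\frakc V_k,\cdot)$, the term $\mathrm{III}_k$ of Lemma~\ref{lemma:error_estimate_Theta} is absent, and one only needs $\delta_k = \mathrm{I}_k + \mathrm{II}_k$ with $\mathrm{I}_k = \partial_t^2\Ucal(t_k) - \frakd^2\Ucal(t_k)$ and $\mathrm{II}_k = -\frakd^2 P(t_k)$. The centered second difference is second--order accurate, so Taylor's Theorem gives $\|\tr\mathrm{I}_k\|_{L^2(\Omega)} \lesssim (\dt)^2\sup_z\|\tr\partial_t^4\Ucal(\cdot,z)\|_{L^2(\Omega)}$ exactly as in \eqref{eq:I_k}; the bound \eqref{eq:II_k} for $\mathrm{II}_k$, obtained from \eqref{eq:tr_P} with $\ell=2$, the trace estimate \eqref{Trace_estimate}, and the stability \eqref{eq:G_stable} of $G_{\T_\Y}$, carries over unchanged. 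Summing over $k$ then reproduces \eqref{eq:estimation_for_delta}, namely $\|\delta_{\dt}\|_{\ell^1(L^2(\Omega))} \lesssim h_\T^{1+s}\mathfrak{A}(f,g,h) + (\dt)^2\|\partial_t^4\nabla\Ucal\|_{L^{\infty}(0,T;L^2(y^\alpha,\C_\Y))}$.

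For the starting energy, $\Theta_0 = 0$ by the choice of initial data in \eqref{eq:time_disc_initlf} (identical to the trapezoidal initialization), so $a_\Y(\Theta_1,\Theta_0)=0$ and $\Ener_1(\Theta_{\dt}) = \tfrac12\|\tr\frakd\Theta_1\|_{L^2(\Omega)}^2 = \tfrac{1}{2(\dt)^2}\|\tr\Theta_1\|_{L^2(\Omega)}^2$; a Taylor expansion of the $V_1$--initialization about $t=0$, together with \eqref{Trace_estimate} and \eqref{eq:G_stable}, gives $\Ener_1(\Theta_{\dt})^{1/2} \lesssim (\dt)^2\|\partial_t^3\nabla\Ucal\|_{L^{\infty}(0,T;L^2(y^\alpha,\C_\Y))}$ as in \eqref{eq:estimation_for_E1}. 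Invoking the time--regularity bounds of Theorem~\ref{thm:time_regularity} to absorb the $\partial_t^3\nabla\Ucal$ and $\partial_t^4\nabla\Ucal$ norms into $\Xi(f,g,h)$ produces the analogue of \eqref{eq:error_estimate_Theta}, i.e.\ $\Ener_K(\Theta_{\dt})^{1/2} \lesssim h_\T^{1+s}\mathfrak{A}(f,g,h) + (\dt)^2\Xi(f,g,h)$. Finally I would assemble the full error as in Lemma~\ref{lemma:error_estimate_total} via the triangle inequality: the truncation error $\ue-\Ucal$ is controlled exponentially by Lemma~\ref{lemma:exp_estimate}, the contribution of $\Theta_K$ by the nonnegativity estimate \eqref{eq:Ek_is_positive}, which, crucially, bounds both $\normC{V_K}$ and $\|\tr\frakd V_K\|_{L^2(\Omega)}$ \emph{at the node} $t_K$ (rather than at $t_{K-1/2}$, explaining why \eqref{eq:error_estimate_total2sfrog} is stated at $t_K$), and $P_K$ by \eqref{eq:nabla_P}--\eqref{eq:tr_P}. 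The main obstacle, in contrast to the unconditionally stable trapezoidal case, is that every invocation of leapfrog stability hinges on the CFL condition \eqref{eq:CFL}: one must keep $\theta$ fixed and verify that the error system for $\Theta$ inherits the same coercivity constant through the discrete inverse inequality \eqref{eq:inverse_inequality}, so that the hidden constants stay independent of $\dt$ and $h_\T$ while depending only on $\theta$.
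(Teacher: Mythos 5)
Your proposal is correct and follows precisely the route the paper intends: the paper itself omits this proof, stating only that ``the arguments are similar to the ones used to prove the results in Lemma~\ref{lemma:error_estimate_Theta}, Lemma~\ref{lemma:error_estimate_total}, and Theorem~\ref{lemma:error_estimate_totals}'' and leaving the details to the reader, and your adaptation (same splitting $e_{\dt}=\Theta_{\dt}+P_{\dt}$, leapfrog-form error equation, stability via \eqref{eq:stability_leapgrog} under the CFL condition, the simpler two-term consistency decomposition without $\mathrm{III}_k$, and assembly at the node $t_K$ for the energy-norm estimate) supplies exactly those details. Your observations that the constants now depend on $\theta$ through \eqref{eq:Ek_is_positive} and that \eqref{eq:error_estimate_total2sfrog} is naturally stated at $t_K$ rather than $t_{K-1/2}$ are both correct and consistent with the paper.
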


\subsection{Computable data}

In \eqref{eq:time_disc_initeofd} we considered
\begin{equation}
V_0 = G_{\T_{\Y}} \calH_{\alpha} g, \quad  V_1 = G_{\T_{\Y}} \left(\calH_{\alpha} g+ \dt \calH_{\alpha}h + \frac{1}{2}( \dt )^2 \partial_t^2 \Ucal(0) \right),
 \label{eq:time_disc_initnew}
\end{equation}
as initial data for the fully discrete schemes of sections \ref{subsec:trapezoidal} and \ref{subsec:leapfrog}. Since the action of $\calH_{\alpha}$ involves the resolution of a problem posed on an infinite dimensional space, we immediately conclude that the initial data $V_0$ and $V_1$ \emph{are not computable}.

To overcome this deficiency, we introduce the discrete extension operator $\mathcal{H}_{\alpha}^{\T}$, which is defined as follows:
if $e \in S_0^1(\Omega,\T)$, then 
\[
\mathcal{H}_{\alpha}^{\T} e := E \in \V(\T_{\Y}): \quad
a_{\Y}(E,W) = 0 \quad \forall W \in  \V(\T_{\Y}): \tr W = 0 ,
\quad
\tr E =  e.
\]

With $\mathcal{H}_{\alpha}^{\T}$ at hand, we define the following computable initial data:
\begin{equation}
\label{eq:time_disc_initeofdnew}
\widetilde V_0 = \mathcal{H}_{\alpha}^{\T}  \Pi_{x'} g, \quad  \widetilde V_1 =  \mathcal{H}_{\alpha}^{\T} \left( \Pi_{x'}g + \dt \Pi_{x'} h  + \frac{1}{2}(\dt)^2 Z \right).
\end{equation}
$Z \in S_0^1(\T,\Omega)$ solves $\langle Z, \tr W \rangle = -a_{\Y}(\widetilde V_0,W) + \langle f(0), \tr W \rangle$ for all $W \in \V(\T_{\Y})$. Notice that $Z$ corresponds to a finite element approximation of $\tr \partial_t^2 \mathcal{U}(0)$.

If we consider $\widetilde V_0$ and $\widetilde V_1$, instead of $V_0$ and $V_1$, as initial data for the schemes of sections \ref{subsec:trapezoidal} and \ref{subsec:leapfrog}, then, to provide an a priori error analysis, it is necessary to modify the first two elements of the sequence $\Theta_{\dt}$, defined in \eqref{eq:spliting}, as follows:
\begin{align*}
\Theta_0 = \widetilde V_0 - G_{\T_{\Y}} \mathcal{U}(t_0),
\qquad
\Theta_1 = \widetilde V_1 - G_{\T_{\Y}} \mathcal{U}(t_1).
\end{align*}
In particular, it suffices to estimate
\[
 E_1(\Theta_{\dt})^{\frac{1}{2}} =\left( \tfrac{1}{2} \| \tr \frakd \Theta_1 \|_{L^2(\Omega)}^2 + \tfrac{1}{2} \normC{\Theta_{1/2}}^2 \right)^{\frac{1}{2}}.
\]

We present the following error estimates.

\begin{lemma}[error estimates for $\Theta_0$ and $\Theta_1$]
If $(V_0,V_1)$ and $(\widetilde V_0, \widetilde V_1)$ are defined by \eqref{eq:time_disc_initnew} and \eqref{eq:time_disc_initeofdnew}, respectively, then
\begin{equation}
\normC{\Theta_0} \lesssim h_{\T}\| g \|_{\mathbb{H}^{1+s}(\Omega)},
\label{eq:V0-tildeV0}
\end{equation}
and
\begin{equation}
\normC{\Theta_1} \lesssim  h_{\T}\mathfrak{A}(f,g,h) + (\dt)^2 \Xi(f,g,h),
\label{eq:V1-tildeV1}
\end{equation}
where $\mathfrak{A}(f,g,h)$ and $\Xi(f,g,h)$ are defined by \eqref{eq:frakA} and \eqref{eq:Sigmanew}, respectively, and the
the hidden constants are independent of $g$, $h$, $(V_0,V_1)$, $(\widetilde V_0,\widetilde V_1)$, and $h_{\T}$.
\end{lemma}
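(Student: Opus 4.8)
The plan is to exploit one structural fact repeatedly: because the continuous initial data $\Ucal(0)=\calH_{\alpha}g$, $\Ucal_t(0)=\calH_{\alpha}h$ and $\partial_t^2\Ucal(0)=\calH_{\alpha}w$ (with $w$ solving \eqref{eq:Ucal0}) are \emph{$\alpha$-harmonic}, the functions $V_0,V_1$, obtained by applying $G_{\T_{\Y}}$ to them, as well as $\widetilde V_0,\widetilde V_1$ and every $\mathcal{H}_{\alpha}^{\T}(\cdot)$ piece, are \emph{discrete $\alpha$-harmonic}: for $\alpha$-harmonic $v$ and $W\in\V(\T_{\Y})$ with $\tr W=0$ one has $a_\Y(G_{\T_{\Y}}v,W)=a_\Y(v,W)=0$. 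Thus each relevant difference equals the discrete $\alpha$-harmonic extension of its own trace, and I may freely use (i) the minimal-energy property $\normC{\mathcal{H}_{\alpha}^{\T}e}\le\normC{W}$ for every discrete $W$ with $\tr W=e$; (ii) the stability $\normC{\mathcal{H}_{\alpha}^{\T}e}\lesssim\|e\|_{\Hs}$; and (iii) the inverse estimate $\normC{\mathcal{H}_{\alpha}^{\T}e}\le C_{\rm inv}h_\T^{-s}\|e\|_{L^2(\Omega)}$ coming from \eqref{eq:inverse_inequality}.

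For \eqref{eq:V0-tildeV0}, since $\Ucal(t_0)=\calW:=\calH_{\alpha}g$ we have $G_{\T_{\Y}}\Ucal(t_0)=V_0$, so $\Theta_0=\widetilde V_0-V_0=\mathcal{H}_{\alpha}^{\T}(\Pi_{x'}g-\tr V_0)$ is discrete $\alpha$-harmonic. Choosing the admissible competitor $W=\widetilde\Pi\calW-V_0\in\V(\T_{\Y})$, where $\widetilde\Pi=\Pi_{x'}\otimes\widetilde\Pi_y^{\bmr}$ is the modified tensor interpolant of the previous proof (so that $\tr\widetilde\Pi\calW=\Pi_{x'}g$ and hence $\tr W=\tr\Theta_0$), the minimal-energy property (i) gives $\normC{\Theta_0}\le\normC{\widetilde\Pi\calW-\calW}+\normC{\calW-G_{\T_{\Y}}\calW}$, and both terms are $\lesssim h_\T\|g\|_{\mathbb{H}^{1+s}(\Omega)}$ by the interpolation-error bounds used in Lemma \ref{lemma:GT_est} and by \eqref{eq:G_approx_1}.

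For \eqref{eq:V1-tildeV1} I split $\normC{\Theta_1}\le\normC{\widetilde V_1-V_1}+\normC{V_1-G_{\T_{\Y}}\Ucal(t_1)}$. The second term is precisely the quantity bounded in Step~3 of Lemma \ref{lemma:error_estimate_Theta}: Taylor's theorem and the time-regularity estimate \eqref{eq:partial_t_l_nabla_U3} give $\normC{V_1-G_{\T_{\Y}}\Ucal(t_1)}\lesssim(\dt)^3\|\partial_t^3\nabla\Ucal\|_{L^{\infty}(0,T;L^2(y^{\alpha},\C_{\Y}))}\lesssim(\dt)^2\Xi(f,g,h)$. For the first term, linearity of $\mathcal{H}_{\alpha}^{\T}$ and $G_{\T_{\Y}}$ yields $\widetilde V_1-V_1=\Theta_0+\dt\,(\mathcal{H}_{\alpha}^{\T}\Pi_{x'}h-G_{\T_{\Y}}\calH_{\alpha}h)+\tfrac12(\dt)^2\,T_c$ with $T_c:=\mathcal{H}_{\alpha}^{\T}Z-G_{\T_{\Y}}\calH_{\alpha}w$. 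The first summand is $\lesssim h_\T\|g\|_{\mathbb{H}^{1+s}(\Omega)}$ by \eqref{eq:V0-tildeV0}, and the second is $\lesssim\dt\,h_\T\|h\|_{\mathbb{H}^{1+s}(\Omega)}\lesssim h_\T\mathfrak{A}(f,g,h)$ by repeating the $\Theta_0$ argument verbatim with $h$ in place of $g$ (using $\dt\le T$ and $\mathbb{H}^{1+s}\hookleftarrow\mathbb{H}^{1+2s}$).

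The crux is $T_c$, the defect caused by replacing $\partial_t^2\Ucal(0)=\calH_{\alpha}w$ and its trace $w$ by the computable $Z$. Writing $T_c=\mathcal{H}_{\alpha}^{\T}(Z-\Pi_{x'}w)+(\mathcal{H}_{\alpha}^{\T}\Pi_{x'}w-G_{\T_{\Y}}\calH_{\alpha}w)$, the second piece is $\lesssim h_\T\|w\|_{\mathbb{H}^{1+s}(\Omega)}\lesssim h_\T\mathfrak{A}(f,g,h)$ by the $\Theta_0$ argument applied to $w$ together with Corollary \ref{cor:st_reg_u}. For the first piece I subtract the defining relation for $Z$ from \eqref{eq:Ucal0}, tested against $\phi=\mathcal{H}_{\alpha}^{\T}\mu_h$, and use the Galerkin orthogonality of $G_{\T_{\Y}}$ to obtain the consistency identity
\[
 (Z-\Pi_{x'}w,\mu_h)_{L^2(\Omega)}=-a_\Y(\Theta_0,\mathcal{H}_{\alpha}^{\T}\mu_h)\qquad\forall\,\mu_h\in S^1_0(\Omega,\T).
\]
Taking $\mu_h=Z-\Pi_{x'}w$ and invoking (iii) gives $\|Z-\Pi_{x'}w\|_{L^2(\Omega)}\lesssim h_\T^{-s}\normC{\Theta_0}\lesssim h_\T^{1-s}\|g\|_{\mathbb{H}^{1+s}(\Omega)}$, and a second application of (iii) yields $\normC{\mathcal{H}_{\alpha}^{\T}(Z-\Pi_{x'}w)}\lesssim h_\T^{-s}\|Z-\Pi_{x'}w\|_{L^2(\Omega)}\lesssim h_\T^{1-2s}\|g\|_{\mathbb{H}^{1+s}(\Omega)}$. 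This is the delicate step, and the main obstacle of the whole lemma: the purely algebraic defect $Z-\Pi_{x'}w$ carries no extra smoothing, so the two inverse inequalities cost $h_\T^{-2s}$ and the $(\dt)^2$ prefactor must absorb them. Invoking the CFL relation \eqref{eq:CFL}, $(\dt)^2\lesssim h_\T^{2s}$, we conclude $\tfrac12(\dt)^2\normC{\mathcal{H}_{\alpha}^{\T}(Z-\Pi_{x'}w)}\lesssim h_\T\|g\|_{\mathbb{H}^{1+s}(\Omega)}\lesssim h_\T\mathfrak{A}(f,g,h)$. Collecting the three summands gives $\normC{\widetilde V_1-V_1}\lesssim h_\T\mathfrak{A}(f,g,h)$, and combining with the second term produces \eqref{eq:V1-tildeV1}.
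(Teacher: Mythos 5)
Your overall architecture is close to the paper's: you identify $\Theta_0=\widetilde V_0-V_0$ as a discrete $\alpha$--harmonic function, bound it by $O(h_{\T})$, split $\normC{\Theta_1}\le\normC{\widetilde V_1-V_1}+\normC{V_1-G_{\T_{\Y}}\Ucal(t_1)}$ exactly as the paper does, and reuse the Taylor/time--regularity bound for the second piece. Two of your choices are genuinely different and mostly to your credit. First, for $\Theta_0$ you use the minimal--energy property of the discrete harmonic extension against the competitor $\tilde\Pi\calW-V_0$, whereas the paper uses a Galerkin--orthogonality decomposition whose term $\mathrm{II}$ is bounded by $\normC{\widetilde V_0-V_0}\,\|g-\Pi_{x'}g\|_{\Hs}$; your route is clean, but it needs an \emph{approximation} estimate for the modified interpolant $\tilde\Pi$ (the one interpolating at $y=0$ on $I_1$), while the paper only ever asserts its \emph{stability} — you should either prove that approximation property or fall back on the paper's decomposition. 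Second, your exact linear splitting $\widetilde V_1-V_1=\Theta_0+\dt(\calH_{\alpha}^{\T}\Pi_{x'}h-G_{\T_{\Y}}\calH_{\alpha}h)+\tfrac12(\dt)^2 T_c$ avoids the Taylor resummation the paper performs, and your consistency identity $(Z-\Pi_{x'}w,\mu_h)_{L^2(\Omega)}=-a_{\Y}(\Theta_0,\calH_{\alpha}^{\T}\mu_h)$ is correct and makes explicit something the paper dismisses with ``follows from stability results.''

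The genuine gap is in how you close the estimate for $T_c$. Applying the inverse inequality \eqref{eq:inverse_inequality} twice costs $h_{\T}^{-2s}$, and you rescue the bound by invoking the CFL relation \eqref{eq:CFL} to write $(\dt)^2\lesssim h_{\T}^{2s}$. But \eqref{eq:CFL} is a hypothesis of the \emph{leapfrog} analysis only; the lemma is stated without it and is explicitly needed for the trapezoidal scheme of section \ref{subsec:trapezoidal}, whose whole point is unconditional stability (in the experiments $\dt\sim h_{\T}^{1/2}$ for all $s$). Without a $\dt$--$h_{\T}$ coupling your contribution is $(\dt)^2h_{\T}^{1-2s}\|g\|_{\mathbb{H}^{1+s}(\Omega)}$, which for $s>\tfrac12$ is not controlled by $h_{\T}\mathfrak{A}(f,g,h)+(\dt)^2\Xi(f,g,h)$. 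The repair is to not round--trip through $L^2(\Omega)$: use $\normC{\calH_{\alpha}^{\T}(Z-\Pi_{x'}w)}\lesssim\|Z-\Pi_{x'}w\|_{\Hs}$ together with an $h_{\T}$--uniform stability bound for $\|Z\|_{\Hs}$ (from the defining equation of $Z$ and \eqref{eq:Ucal0}) and for $\|\Pi_{x'}w\|_{\Hs}$, so that the prefactor $(\dt)^2$ alone places this term inside the $(\dt)^2\Xi(f,g,h)$ budget — this is what the paper's terse appeal to ``stability results'' intends, and it requires no time--step restriction.
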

\begin{proof}
Since $V_0, \tilde V_0 \in \V(\T_{\Y})$, we can invoke property \eqref{eq:elliptic_projection} and conclude that
\begin{align*}
\normC{\widetilde V_0-V_0}^2 &= a_\Y(\widetilde V_0-V_0, \widetilde V_0 - G_{\T_{\Y}}\calH_\alpha \Pi_{x'} g) + a_\Y(\widetilde V_0-V_0, G_{\T_{\Y}}\calH_\alpha (\Pi_{x'} g - g))
\\
& =
 a_\Y(\widetilde V_0-V_0, \widetilde V_0 - \calH_\alpha \Pi_{x'} g) + a_\Y(\widetilde V_0-V_0, \calH_\alpha( \Pi_{x'} g -g)) = \mathrm{I} + \mathrm{II}.
\end{align*}
To bound $\mathrm{I}$, we notice that $\tr ( \widetilde V_0 - \calH_\alpha \Pi_{x'} g) =0$. On the other hand, $V_0 - \widetilde V_0$ satisfies
\[
a_{\Y}( V_0 - \widetilde V_0, W) = 0  \quad \forall W \in  \V(\T_{\Y}): \tr W = 0 ,
\quad
\tr ( V_0 - \widetilde V_0) =  \tr G_{\T_{\Y}}\calH_\alpha g - \Pi_{x'}g.
\]
Consequently,
$
 \mathrm{I} = 0.
$
Now, since $\mathcal{H}_{\alpha}$ satisfies that $\normC{\mathcal{H}_{\alpha} w} \lesssim  \| w \|_{\Hs}$  for all $w \in \Hs$, we arrive at
\begin{equation*}
|\mathrm{II}| \lesssim \normC{\widetilde V_0-V_0}  \| g - \Pi_{x'} g\|_{\Hs}
\lesssim h_{\T}  \normC{V_0-V_0}  \| g \|_{\mathbb{H}^{1+s}(\Omega)}.
\end{equation*}
Since $\mathrm{I} = 0$, the estimate for $\mathrm{II}$ yields 
\eqref{eq:V0-tildeV0}.

We now control $\normC{\Theta_1}$. A basic application of the triangle inequality together with estimate \eqref{eq:Estimate_for_Theta1} reveal that
\begin{align*}
\normC{\Theta_1} & \leq \normC{\widetilde V_1 - V_1} + \normC{V_1 - G_{\T_{\Y}}\mathcal{U}(t_1)} 
\\
& \lesssim \normC{\widetilde V_1 - V_1} +  (\dt)^3 \| \partial_t^3 \nabla \Ucal \|_{L^{\infty}(0,T;L^2(y^{\alpha},\C_{\Y}))}.
\end{align*}
It thus suffices to bound $\normC{\widetilde V_1 - V_1}$. To accomplish this task, we first notice that a simple application of Taylor's Theorem yields
\begin{multline}
  \widetilde V_1 =  \mathcal{H}_{\alpha}^{\T} \Pi_{x'} \left[ \tr \mathcal{U}(0) + \dt  \tr \partial_t \mathcal{U}(0)  + \tfrac{(\dt)^2}{2} \tr \partial_t^2 \mathcal{U}(0) \right] 
  \\
  + \tfrac{(\dt)^2 }{2} \mathcal{H}_{\alpha}^{\T}(Z- \Pi_{x'} \tr \partial_t^2 \mathcal{U}(0)) = \mathcal{H}_{\alpha}^{\T}\Pi_{x'}\left[ \tr \mathcal{U}(t_1) - \tfrac{(\dt)^3}{6} \tr \partial_t^3\mathcal{U}(\overline \zeta)\right]
  \\
  + \tfrac{(\dt)^2 }{2} \mathcal{H}_{\alpha}^{\T}(Z- \Pi_{x'} \tr \partial_t^2 \mathcal{U}(0)),
\end{multline}
with $\overline \zeta \in (0,t_1)$. Similar arguments allow us to conclude that
\begin{equation}
  V_1 =  G_{\T_{\Y}} \left( \mathcal{U}(t_1) - \frac{(\dt)^3}{6} \partial_t^3\mathcal{U}(\underline \zeta)\right),
\end{equation}
with $\underline \zeta \in (0,t_1)$. Consequently,
\begin{multline}
 \normC{\widetilde V_1 - V_1} \lesssim \normC{ \mathcal{H}_{\alpha}^{\T}\Pi_{x'}\tr \mathcal{U}(t_1) -  G_{\T_{\Y}} \mathcal{U}(t_1) } + (\dt)^3 \| \partial_t^3\nabla \mathcal{U}\|_{L^{\infty}(0,T;L^2(y^{\alpha},\C))}
 \\
 + (\dt)^2 \normC{\mathcal{H}_{\alpha}^{\T}(Z- \Pi_{x'} \tr \partial_t^2 \mathcal{U}(0))} = \textrm{I + II + III}.
\end{multline}
To estimate $\mathrm{I}$ we invoke the same arguments that lead to 
\eqref{eq:V0-tildeV0}:
\[
\mathrm{I} \lesssim h_{\T} \| \tr \mathcal{U}(t_1) \|_{\mathbb{H}^{1+s}(\Omega)} \lesssim  h_{\T} \| \tr \mathcal{U} \|_{L^{\infty}(0,T;\mathbb{H}^{1+s}(\Omega))}.
\]
A bound for the term $\mathrm{III}$ follows from stability results. The collection of these estimates yield \eqref{eq:V1-tildeV1}
\end{proof}

\begin{lemma}[estimate for $E_1(\Theta_{\dt})$]
If we consider $\widetilde V_0$ and $\widetilde V_1$ as initial data for the schemes of sections \ref{subsec:trapezoidal} and \ref{subsec:leapfrog}, we then have that
\begin{equation}
 \| \tr \frakd\Theta_1 \|_{L^2(\Omega)} = \frac{1}{\dt}  \| \tr (\Theta_1 - \Theta_0) \|_{L^2(\Omega)} \lesssim h^{1+s}_{\T}\mathfrak{A}(f,g,h) + (\dt)^2 \Xi(f,g,h),
 \label{eq:estimatefrakd12}
\end{equation}
and
\begin{equation}
  \normC{ \Theta_{1/2} } \lesssim \normC{ \Theta_0} + \normC{ \Theta_1} \lesssim h_{\T}\mathfrak{A}(f,g,h) + (\dt)^2 \Xi(f,g,h),
  \label{eq:estimateTheta12}
\end{equation}
where $\mathfrak{A}(f,g,h)$ and $\Xi(f,g,h)$ are defined by \eqref{eq:frakA} and \eqref{eq:Sigmanew}, respectively, and the
the hidden constants are independent of $g$, $h$, $(V_0,V_1)$, $(\widetilde V_0,\widetilde V_1)$, and $h_{\T}$
\end{lemma}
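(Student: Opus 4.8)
The plan is to handle the two displayed bounds separately: \eqref{eq:estimateTheta12} follows at once from the previous lemma, whereas \eqref{eq:estimatefrakd12} requires an expansion exhibiting the cancellation hidden behind the factor $\dt^{-1}$.

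For \eqref{eq:estimateTheta12} I would write $\normC{\Theta_{1/2}}\leq\tfrac12\normC{\Theta_0}+\tfrac12\normC{\Theta_1}$ and insert \eqref{eq:V0-tildeV0} and \eqref{eq:V1-tildeV1}. Since $1+s\leq 1+3s$ implies $\|g\|_{\mathbb{H}^{1+s}(\Omega)}\lesssim\|g\|_{\mathbb{H}^{1+3s}(\Omega)}\leq\mathfrak{A}(f,g,h)$, the contribution $h_{\T}\|g\|_{\mathbb{H}^{1+s}(\Omega)}$ coming from $\normC{\Theta_0}$ is absorbed into $h_{\T}\mathfrak{A}(f,g,h)$, which yields the claim.

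Estimate \eqref{eq:estimatefrakd12} is the substantive part. Using the computable data \eqref{eq:time_disc_initeofdnew} together with the defining identity $\tr\mathcal{H}_{\alpha}^{\T}e=e$, the increment of the computable data is $\dt^{-1}\tr(\widetilde V_1-\widetilde V_0)=\Pi_{x'}h+\tfrac{\dt}{2}Z$. Expanding $\mathcal{U}(t_1)-\mathcal{U}(t_0)$ by Taylor's theorem to second order about $t=0$ and recalling $\partial_t\mathcal{U}(0)=\mathcal{H}_{\alpha}h$ and $\partial_t^2\mathcal{U}(0)=\mathcal{H}_{\alpha}w$, with $w$ as in \eqref{eq:Ucal0}, I obtain
\begin{equation*}
\tr\frakd\Theta_1=\bigl[\Pi_{x'}h-\tr G_{\T_{\Y}}\mathcal{H}_{\alpha}h\bigr]+\tfrac{\dt}{2}\bigl[Z-\tr G_{\T_{\Y}}\mathcal{H}_{\alpha}w\bigr]-\tfrac1\dt\tr G_{\T_{\Y}}R,
\end{equation*}
where $R=\mathcal{U}(t_1)-\mathcal{U}(t_0)-\dt\,\partial_t\mathcal{U}(0)-\tfrac{(\dt)^2}{2}\partial_t^2\mathcal{U}(0)$ is the third--order remainder. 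The first bracket is a consistency error for the datum $h$: splitting it as $(\Pi_{x'}h-h)+(h-\tr G_{\T_{\Y}}\mathcal{H}_{\alpha}h)$ and invoking the $L^2(\Omega)$ approximation \eqref{eq:ortogonal_projection_approximation} of $\Pi_{x'}$ together with the $L^2(\Omega)$ estimate \eqref{eq:G_approx_3} (applied to the truncated extension $\mathcal{H}_{\alpha}h$, whose trace is $h$) bounds it by $h_{\T}^{1+s}\|h\|_{\mathbb{H}^{1+s}(\Omega)}\lesssim h_{\T}^{1+s}\mathfrak{A}(f,g,h)$. For the remainder, the trace estimate \eqref{Trace_estimate} and the stability \eqref{eq:G_stable} give $\|\tr G_{\T_{\Y}}R\|_{L^2(\Omega)}\lesssim\normC{R}\lesssim(\dt)^3\|\partial_t^3\nabla\mathcal{U}\|_{L^{\infty}(0,T;L^2(y^{\alpha},\C_{\Y}))}$, so that $\dt^{-1}\tr G_{\T_{\Y}}R$ contributes $(\dt)^2\|\partial_t^3\nabla\mathcal{U}\|_{L^{\infty}(0,T;L^2(y^{\alpha},\C_{\Y}))}$, which is controlled by $(\dt)^2\Xi(f,g,h)$ through the time--regularity estimate \eqref{eq:partial_t_l_nabla_U3}.

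The main obstacle is the middle bracket $\tfrac{\dt}{2}[Z-\tr G_{\T_{\Y}}\mathcal{H}_{\alpha}w]$. Both $Z$ and $\tr G_{\T_{\Y}}\partial_t^2\mathcal{U}(0)$ are discrete approximations of $w=\tr\partial_t^2\mathcal{U}(0)$, but they are defined through different variational problems: $Z$ solves the mixed problem with datum $\widetilde V_0$ and test space $\V(\T_{\Y})$, while $\tr G_{\T_{\Y}}\mathcal{H}_{\alpha}w$ arises from the weighted elliptic projection. The key step is to show $\|Z-\tr G_{\T_{\Y}}\mathcal{H}_{\alpha}w\|_{L^2(\Omega)}\lesssim h_{\T}^{1+s}\mathfrak{A}(f,g,h)$, which I would establish by comparing these two problems, exploiting that $\widetilde V_0$ and $G_{\T_{\Y}}\mathcal{U}(0)$ differ only by the $L^2(\Omega)$ consistency error already controlled in the derivation of \eqref{eq:V0-tildeV0}. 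Once this is in hand, since $\dt$ is bounded we have $\dt\,h_{\T}^{1+s}\lesssim h_{\T}^{1+s}$, so the middle bracket contributes $\lesssim h_{\T}^{1+s}\mathfrak{A}(f,g,h)$. Collecting the three contributions gives \eqref{eq:estimatefrakd12}. This comparison of the two elliptic problems defining the discrete acceleration is the only genuinely new estimate; the remaining terms are routine applications of the approximation, trace, stability, and time--regularity results recalled above.
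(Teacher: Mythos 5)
Your proposal follows essentially the same route as the paper's proof: \eqref{eq:estimateTheta12} is read off from the bounds on $\normC{\Theta_0}$ and $\normC{\Theta_1}$, while for \eqref{eq:estimatefrakd12} you Taylor--expand $\mathcal{U}(t_1)$ about $t=0$ and split $\tr\frakd\Theta_1$ into the same three pieces the paper uses (the $h$--consistency term, the $Z$ versus $\tr G_{\T_{\Y}}\partial_t^2\Ucal(0)$ term, and the $O((\dt)^3)$ remainder), bounding each with the projection, trace, stability, and time--regularity estimates exactly as the paper does. The one step you flag as needing work --- the comparison of $Z$ with $\tr G_{\T_{\Y}}\partial_t^2\Ucal(0)$ --- is also left to ``similar arguments'' in the paper, so your treatment is at the same level of detail and your plan for it is consistent with the intended argument.
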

\begin{proof}
The proof of \eqref{eq:estimateTheta12} follows directly from the estimates \eqref{eq:V0-tildeV0} and \eqref{eq:V1-tildeV1}. In what follows we derive \eqref{eq:estimatefrakd12}. To accomplish this task and simplify notation, we define
\begin{equation}
 \mathfrak{D} (\mathcal{U}) : =  \mathcal{U}(0) + \dt \partial_t  \mathcal {U}(0) + \frac{1}{2}(\dt)^2 \partial_t^2  \mathcal {U}(0).
\end{equation}
Now, notice that
\begin{multline*}
\| \tr(\Theta_1 - \Theta_0 ) \|_{L^2(\Omega)} = 
\| \tr( \widetilde V_1 - G_{\T_{\Y}}\mathcal{U}(t_1) - \widetilde V_0 + G_{\T_{\Y}}\mathcal{U}(0) ) \|_{L^2(\Omega)} 
 \\
 \leq \| \tr( \widetilde V_1 -  \mathfrak{D} ( G_{\T_{\Y}} \mathcal{U}) - \widetilde V_0 + G_{\T_{\Y}}\mathcal{U}(0) ) \|_{L^2(\Omega)} + 
 \| \tr(\mathfrak{D} ( G_{\T_{\Y}} \mathcal{U}) - G_{\T_{\Y}}\mathcal{U}(t_1) ) \|_{L^2(\Omega)}.
\end{multline*}

The trace estimate \eqref{Trace_estimate}, the stability property \eqref{eq:G_stable} and an application of Taylor's Theorem reveal that
\begin{align*}
  \| \tr(\mathfrak{D} ( G_{\T_{\Y}} \mathcal{U}) - G_{\T_{\Y}}\mathcal{U}(t_1) ) \|_{L^2(\Omega)} & \lesssim \normC{\mathfrak{D} ( G_{\T_{\Y}} \mathcal{U}) - G_{\T_{\Y}}\mathcal{U}(t_1)} \lesssim  \normC{\mathfrak{D} ( \mathcal
  {U}) - \mathcal{U}(t_1)}
  \\
  & \lesssim (\dt)^3 \| \partial_t^3 \nabla \mathcal{U}(\cdot,z) \|_{L^{\infty}(0,T;L^2(y^{\alpha},\C))}.
\end{align*}

We now use the definitions of $\widetilde V_0$ and $\widetilde V_1$ to arrive at
\begin{multline}
\| \tr( \widetilde V_1 -  \mathfrak{D} ( G_{\T_{\Y}} \mathcal{U}) - \widetilde V_0 + G_{\T_{\Y}}\mathcal{U}(0) ) \|_{L^2(\Omega)} 
\leq (\dt) \|\Pi_{x'}h - \tr G_{\T_{\Y}}\partial_t \mathcal{U}(0)  \|_{L^2(\Omega)} 
\\
+(\dt)^2 \| Z - \tr G_{\T_{\Y}}\partial_t^2 \mathcal{U}(0)  \|_{L^2(\Omega)}.
\end{multline}
To bound $\|\Pi_{x'}h - \tr G_{\T_{\Y}}\partial_t \mathcal{U}(0)  \|_{L^2(\Omega)}$ we proceed as follows:
\begin{multline}
\|\Pi_{x'}h - \tr G_{\T_{\Y}}\partial_t \mathcal{U}(0)  \|_{L^2(\Omega)} \leq \|\Pi_{x'}h - h  \|_{L^2(\Omega)} 
\\
+ \|\tr \partial_t \mathcal{U}(0) - \tr G_{\T_{\Y}}\partial_t \mathcal{U}(0)  \|_{L^2(\Omega)} \lesssim h_{\T}^{1+s} ( \| h\|_{\mathbb{H}^{1+s}(\Omega)} + \mathfrak{A}(f,g,h) ),
\end{multline}
where we have used \eqref{eq:tr_P} with $\ell = 1$.
Similar arguments allow us to control $\| Z - \tr G_{\T_{\Y}}\partial_t^2 \mathcal{U}(0)  \|_{L^2(\Omega)}$. This concludes the proof.
\end{proof}
\begin{remark}[influence of computable data in error estimates]\rm
If $(\widetilde V_0, \widetilde V_1)$ are used as initial data for the trapezoidal multistep method of section \ref{subsec:trapezoidal} and the leapfrog scheme of section \ref{subsec:leapfrog}, then the error estimates of Theorems \ref{lemma:error_estimate_totals} and \ref{lemma:error_estimate_totalsfrog} hold with no modifications.
\end{remark}
\section{Numerical results and implementation}\label{sec:numerics}

Let $\{ \phi_1, \dots, \phi_{\M } \}$ denote a basis of $S^\bmr((0,\Y),\calG_{\sigma}^M)$ such that $\phi_1(0) = 1$ and $\phi_j(0) = 0$ for $j > 1$.  The corresponding mass and stiffness matrices are denoted by $B_{\Y}$ and $A_{\Y}$:
\[
  \left(B_{\Y}\right)_{ij} = \int_0^\Y y^\alpha \phi_i(y)\phi_j(y) dy, \qquad
    \left(A_{\Y}\right)_{ij} = \int_0^\Y y^\alpha \phi'_i(y)\phi'_j(y) dy.
\]
We denote by $B_{\Omega}$ and $A_{\Omega}$ the standard mass and stiffness matrices corresponding to the finite element space $S^1_0(\Omega,\T)$. In what follows, we describe the implementation of a discrete Dirichlet-to-Neumann map. Once this operation is available the time--stepping methods that are proposed in this work can be implemented in a standard way; 
in the case of the implicit method some further steps may be needed in order to obtain an efficient algorithm.

\subsection{Discrete Dirichlet-to-Neumann map}

Given $U \in S^1_0(\Omega,\T)$, we consider the problem: Find $V \in \V(\T_{\Y})$ and $\eta \in S^1_0(\Omega,\T)$ such that
\begin{equation}
\label{eq:discrete_system}
\begin{split}
a_\Y(V,W) &= \langle \eta, \tr W\rangle \qquad \forall W \in \V(\T_{\Y}),\\
\tr V &= U.
\end{split}
\end{equation}
Let us denote by $\Ub$, $\Vb$, and $\etab$  the coefficient vectors associated with the discrete functions $U$, $V$ and $\eta$. Note that the first $N$ components of $\Vb$ and $\Ub$ are equal. We denote the remaining components of $\Vb$  by  $\widetilde\Vb = (\Vb)_i$, $i = N+1,\dots, N\M$; we recall that $N = \# \T$, the number of degrees of freedom of $\T$. With this notation at hand, the matrix system \eqref{eq:discrete_system} takes the form
\[
(B_\Y \otimes A_\Omega+ A_\Y \otimes B_\Omega)
\begin{pmatrix}
  \Ub\\\widetilde\Vb
\end{pmatrix}
 =
\begin{pmatrix}
  B_\Omega \etab \\ \mathbf{0}
\end{pmatrix}.
\]

We denote by $\widetilde B_\Y$ and $\widetilde A_\Y$ the matrices that are obtained by removing the first row and first column from $B_\Y$ and $A_\Y$, respectively. Let $\tilde b_\Y$ and $\tilde a_\Y$ denote the vectors containing the first components of the rows $i = 2,\dots,\M$ of the matrices $B_\Y$ and $A_\Y$. 

The vector $\widetilde\Vb$ is the solution of 
\begin{equation}
  \label{eq:DtN_sys}
(\widetilde B_\Y \otimes A_\Omega+ \widetilde A_\Y \otimes B_\Omega)
\widetilde\Vb
 =
-\left(\tilde b_\Y \otimes A_\Omega+ \tilde a_\Y \otimes B_\Omega \right)\Ub.  
\end{equation}
Once this system is solved we can obtain $\etab$ by solving
\[
B_\Omega \etab = (bA_\Omega +aB_\Omega) \Ub+(\tilde b_\Y^T \otimes A_\Omega+\tilde a_\Y^T \otimes B_\Omega) \widetilde\Vb,
\]
where $b = (B_\Y)_{11}$ and $a = (A_\Y)_{11}$. We denote by $L_h^s$ the matrix that describes the linear map $u \mapsto \eta$:
\begin{equation}
  \label{eq:DtN_matrix}
L_h^s \Ub = B_\Omega\etab.  
\end{equation}

The main computational cost when solving \eqref{eq:discrete_system} is the resolution of \eqref{eq:DtN_sys}. This can be done efficiently by diagonalizing the small matrices $\widetilde B_\Y$ and $\widetilde A_\Y$. In fact, since both matrices are
symmetric and positive definite, we can find a matrix of generalized eigenvectors $X$ such that
\[
X^T \widetilde B_\Y X = \diag(\mu_1,\dots,\mu_\M), \qquad
X^T\widetilde A_\Y X = \diag(1,\dots,1),
\]
where $\mu_j > 0$ denote the eigenvalues of the generalized eigenvalue problem $\widetilde B_{\Y} X = \widetilde A_{\Y} X\diag(\mu_1,\dots,\mu_\M)$. The system \eqref{eq:DtN_sys} can thus be transformed to $\M$ independent linear systems
\[
(\mu_j A_\Omega+ B_\Omega)
\hat\Vb_j
 =
\left( (X^T\tilde b_\Y)_j A_\Omega+ (X^T\tilde a_\Y)_j B_\Omega \right)\Ub,
\]
where $\hat \Vb^T = (\hat \Vb^T_1,\dots, \hat \Vb^T_\M)$,
\[
\widetilde \Vb = (X \otimes I) \hat \Vb,
\]
and $I \in \mathbb{R}^{N \times N}$ is the identity matrix.

\subsection{Leapfrog time-stepping scheme}

Using the previously defined operator $L_h^s$, the leapfrog scheme can be now written in the familiar form
\[
\frac1{\dt^2}B_\Omega (\Ub_{k+1}-2\Ub_{k}+\Ub_{k-1})+L_h^s \Ub_k = B_\Omega\fb_k,
\]
where $\fb_k$ is the coefficient vector containing the $L^2$ projection of $f(t_k)$ onto the space $S^1_0(\Omega,\T)$. The main cost is the application of $L_h^s \Ub_k$ in each step followed by the inversion of the mass matrix $B_\Omega$; the latter being usually cheap.

\subsection{Trapezodial  time-stepping scheme}
The matrix system can be written, in a familiar form, involving only functions in $S^1_0(\Omega,\T)$:
\[
\frac1{\dt^2}B_\Omega (\Ub_{k+1}-2\Ub_{k}+\Ub_{k-1})+\frac14 L_h^s(\Ub_{k+1}+2\Ub_{k}+\Ub_{k-1})  = \frakc B_\Omega\fb_k.
\]
The difficulty now is that at each step we need to solve the system
\[
\left(B_\Omega+\tfrac{\dt^2}{4}L_h^s\right)\Ub_{k+1} = \tilde \fb_k,
\]
where $\tilde \fb_k$ contains known terms. While we could solve this system iteratively, it is more efficient to unwrap again the operator $L_h^s$ to see that $\Ub_{k+1}$ satisfies the system
\[
(B_\Y \otimes A_\Omega+ A_\Y \otimes B_\Omega)
\begin{pmatrix}
  \Ub_{k+1}\\\widetilde\Vb_{k+1}
\end{pmatrix}
 =
\begin{pmatrix}
  B_\Omega\etab_{k+1} \\ 0
\end{pmatrix}
\]
and
\[
B_\Omega \Ub_{k+1}+\tfrac{\dt^2}{4} B_\Omega\etab_{k+1} = \tilde \fb_k.
\]
Denoting by $E_1 = \diag(1,0,\dots,0)$, we can write this as a single system
\[
\left(\tfrac{\dt^2}4 B_\Y \otimes A_\Omega+ (E_1+\tfrac{\dt^2}4 A_\Y) \otimes B_\Omega\right)
\begin{pmatrix}
  \Ub_{k+1}\\\widetilde\Vb_{k+1}
\end{pmatrix}
 =
\begin{pmatrix}
  \tilde \fb_{k} \\ 0
\end{pmatrix}.
\]
As the matrices $B_\Y$ and $E_1+\tfrac{\dt^2}4 A_\Y$ are again symmetric and positive definite similar diagonalization procedure can be used to solve the system efficiently.

\subsection{Numerical results}

\subsubsection{A 1D example}
Let us first perform a numerical example for $n=1$. Let $\Omega = (0,1)$ and consider the space--fractional wave equation
\[
  \partial_t^2 u +(-\Delta )^s u = f, 
\]
with $f(x',t) = (\pi^{2s}-1)\sin (t) \sin (\pi x')$. The initial data are such that the exact solution is
$u(x',t) = \sin (t) \sin (\pi x')$. The initial values for both time--stepping schemes can be taken as $U_0 = 0$ and $U_1 = \dt \Pi_{x'} \sin (\pi x')$. Note that the additional $O(\dt^2)$ term that is needed
in the definition of $U_1$ is zero because $u''(0) = 0$.

We set the final time as $T = \pi/2$ and perform numerical experiments for the following choices of the parameter $s$: $s = 1/4$ and $s = 3/4$. For space discretization in $\Omega$, we consider a uniform mesh with meshwidth $h_\T$. For the trapezoidal scheme, and to obtain linear convergence, we set $\dt = (0.5 h_{\T})^{1/2}$. For the leapfrog scheme, in order to ensure the stability of the scheme, we choose $\dt = (0.5 h_{\T})^{\max(1/2,s)}$. Notice that, the aforementioned choices of the parameter $s$ would render the leapfrog scheme unstable as a solution technique for the standard wave equation.

In Figure~\ref{fig:oned_example} we show the experimental rate of convergence for
\begin{equation}
  \label{eq:1d_error}
\text{error} =   \|U_K - u(T)\|_{\Hs}.  
\end{equation}
We observe that, as expected, the error decays linearly with respect to $h_{\T}$. We also notice that the errors for both discretization schemes are almost identical.

\begin{remark}[properties of the leapfrog scheme]\rm
When compared with the trapezoidal scheme, the leapfrog scheme is easier to implement. However, it seems to lose one of the main advantages that it has for the resolution of the standard wave equation. As the diffusion operator is nonlocal the explicit nature of the scheme is no longer of such an importance.
\end{remark}

\begin{figure}
  \centering
  \includegraphics[width=0.45\textwidth]{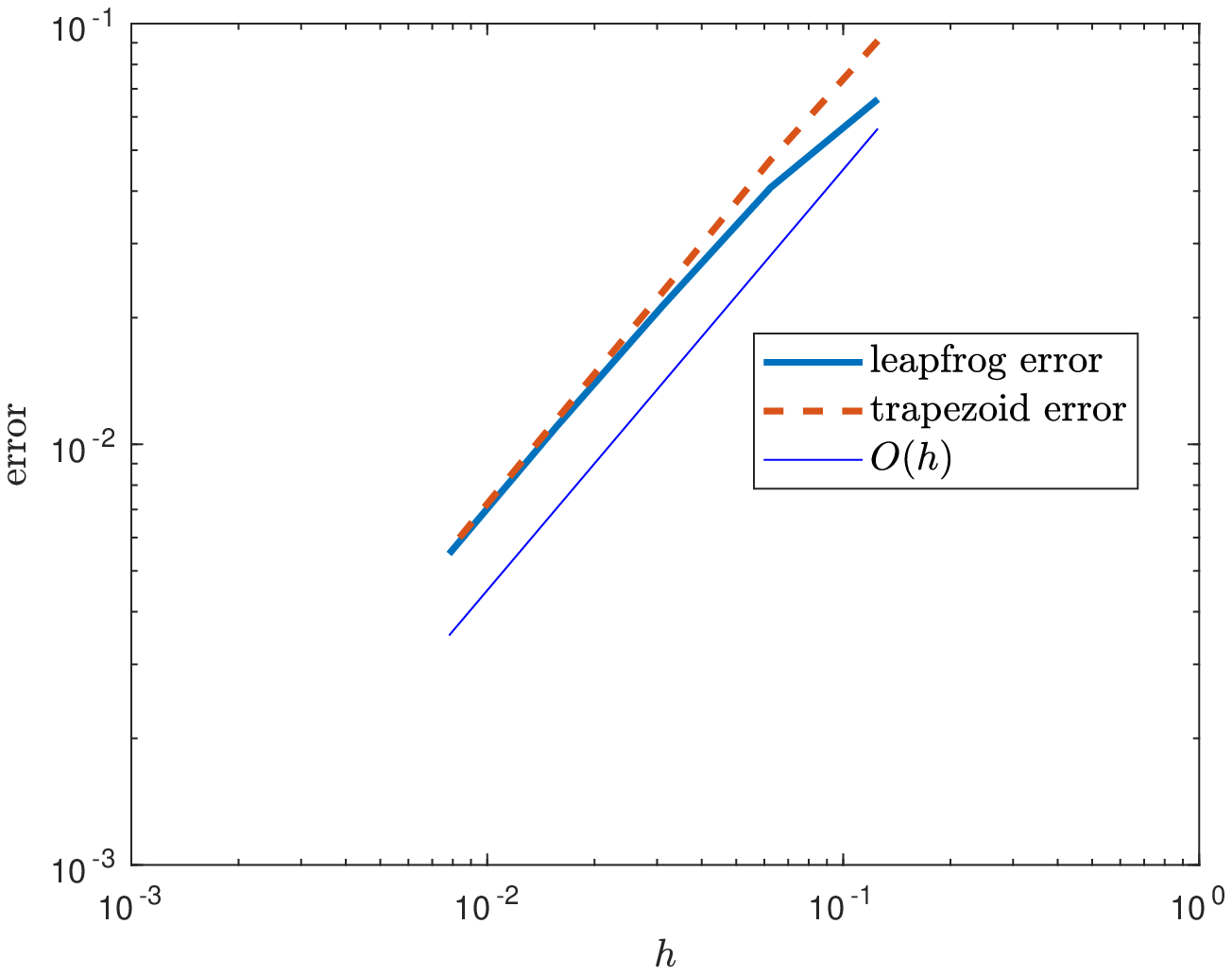}
    \includegraphics[width=0.45\textwidth]{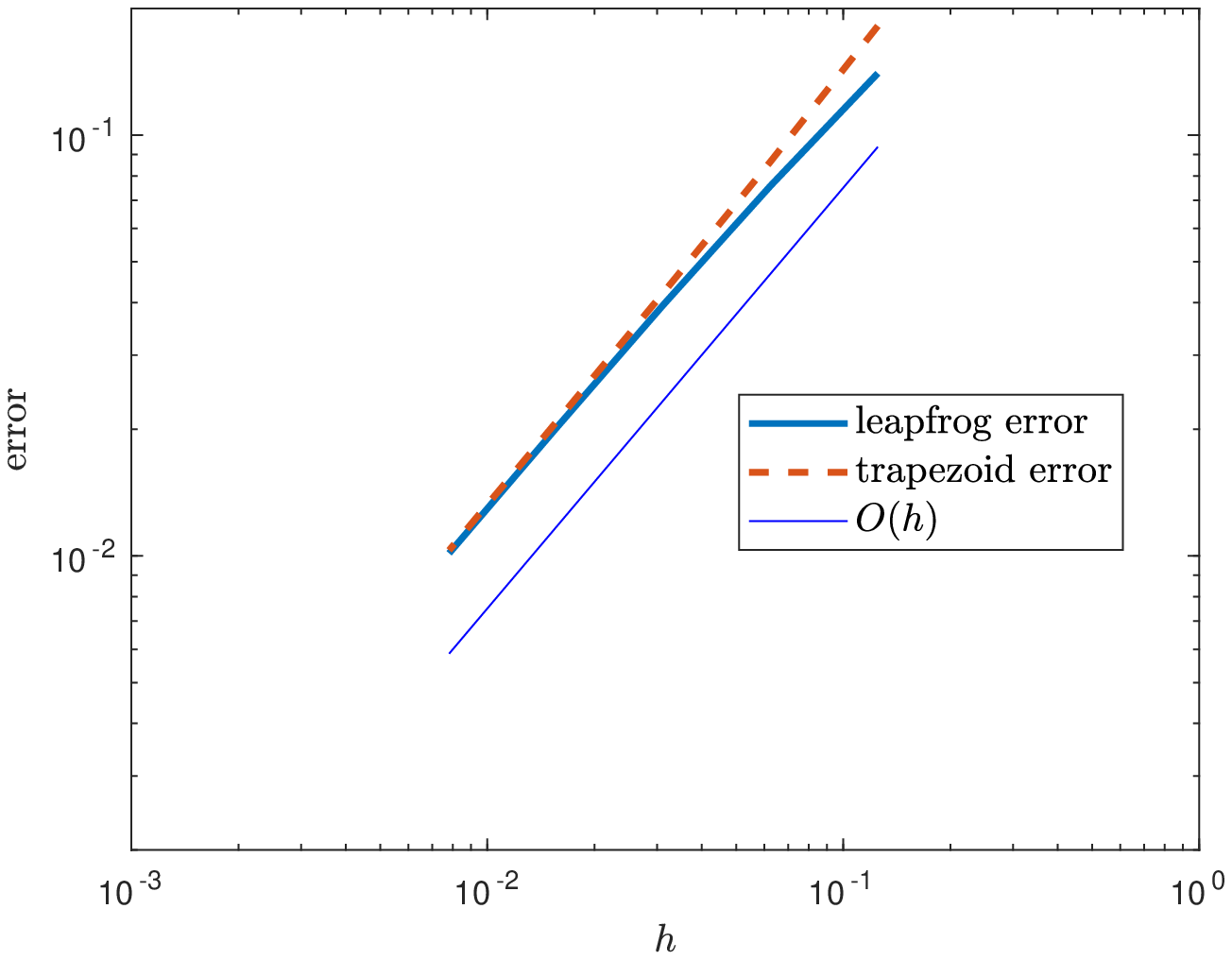}
  \caption{Experimental rates of convergence for the error \eqref{eq:1d_error} for a one dimensional example. The experiment for $s = 1/4$ is shown on the left and the one for $s = 3/4$ on the right.}
  \label{fig:oned_example}
\end{figure}
\subsubsection{A 2D example}

We let $n= 2$, and consider the square domain $\Omega = (-1,1)^2 \subset \mathbb{R}^2$ and the space--fractional wave equation
\[
  \partial_t^2 u +(-\Delta )^s u = 0.
\]
The exact solution is given by
\[
  u(x',t) = \cos(2^{s/2}\pi^s t) \sin(\pi x'_1) \sin(\pi x_2').
\]
We set  $T = 1.5$ and $\dt = 0.5h_\T$. In this experiment we measure, in the $L^2(\Omega)$--norm,  the error committed in the approximation of the time--derivative:
\begin{equation}
  \label{eq:2d_error}
\text{error}_{2D} =   \|\frakd U_K - \partial_t u(t_{K-1/2})\|_{L^2(\Omega)}.
\end{equation}
The computations were done using the NGSolve/Netgen software package \cite{netgen1,netgen2}.

The convergence properties of the leapfrog scheme are presented in Figure~\ref{fig:twod_example}. It can be observed that, for both values of the parameter $s$ considered, the experimental rate of convergence for the $\text{error}_{2D}$ decays quadratically with respect to $h_{\T}$. We notice that the observed rates are better than the ones derived in Theorem \ref{lemma:error_estimate_totalsfrog}, but are in agreement with approximation theory since, in this case, $u \in H^2(\Omega)$. The improvement of the estimate \eqref{eq:error_estimate_total1sfrog} of Theorem \ref{lemma:error_estimate_totalsfrog} from $h_{\T}^{1+s}$ to $h_{\T}^2$ is an open problem.

\begin{figure}
  \centering
  \includegraphics[width=0.65\textwidth]{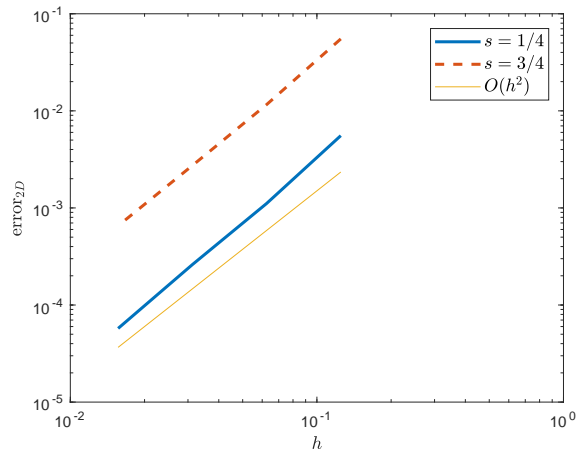}
  \caption{Experimental rates of convergence for the error \eqref{eq:2d_error} when $\Omega = (-1,1)^2$ and for different values of $s$. }
  \label{fig:twod_example}
\end{figure}

\bibliographystyle{plain}
\bibliography{biblio}

\end{document}